\theoremstyle{plain}
\newtheorem{theorem}{Theorem}[section]
\newtheorem{lemma}[theorem]{Lemma}
\theoremstyle{definition}
\newtheorem{definition}{Definition}
\theoremstyle{remark}
\newtheorem{remark}{Remark}
\newtheorem{assumption}[theorem]{Assumption}
\newcommand{\rset}{\mathbb{R}}
\newcommand{\Ld}{L_{\mathrm{d}}}
\newcommand{\bo}{}
\newcommand{\lu}{\mathcal{L}}
\newcommand{\ca}{\mathcal{K}}
\providecommand{\norm}[1]{\lVert#1\rVert}
\begin{document}

\articletype{RESEARCH ARTICLE}

\title{Iteration complexity analysis of dual first order methods for
conic convex programming}

\author{
\name{I. Necoara\textsuperscript{a}$^{\ast}$
\thanks{$^\ast$Corresponding author. Email:
ion.necoara@acse.pub.ro.} and A. Patrascu\textsuperscript{a}}
\affil{\textsuperscript{a} Automatic Control and  Systems
Engineering Department, University Politehnica Bucharest, 060042
Bucharest, Romania  \received{January 2015} } }

\maketitle

\begin{abstract}
In this paper we provide a detailed analysis  of the iteration
complexity of dual first order methods for solving conic convex
problems. When it is difficult to project on the primal feasible set
described by convex constraints, we use the Lagrangian relaxation to
handle the complicated constraints and then, we apply dual first
order algorithms for solving the corresponding dual problem. We give
convergence analysis for dual first order algorithms (dual gradient
and fast gradient algorithms): we provide sublinear or linear
estimates on the primal suboptimality and feasibility violation of
the generated approximate primal solutions. Our analysis relies on
the Lipschitz property of the gradient of the dual function or an
error bound property of the dual. Furthermore, the iteration
complexity analysis is based on two types of approximate primal
solutions:  the last primal iterate or an average primal sequence.
\end{abstract}

\begin{keywords}
conic convex problem, smooth optimization, dual first order methods,
approximate primal solutions,  rate of convergence.
\end{keywords}

\begin{classcode}90C25; 90C46; 49N15; 65K05.
\end{classcode}

\section{Introduction}
\label{sec_introduction} Nowadays, many engineering applications can
be posed as  conic  convex problems.  Several
important applications that can be modeled in this framework,  the
network utility maximization  problem
\cite{BecNed:14,KelMau:98,WeiOzd:10}, the resource allocation
problem \cite{XiaBoy:06}, the optimal power flow problem for a power
system \cite{ZimMur:11} or model predictive control problem for a
dynamical system \cite{NecNed:13,NedNec:12,PatBem:12}, have
attracted great attention lately.

\noindent When it is difficult to project on the primal feasible set
of the  convex problem, we use the Lagrangian relaxation to handle
the complicated constraints and then solve the corresponding dual.
First order methods for solving  the corresponding dual of
constrained  convex problems  have been extensively studied in the
literature. Dual subgradient methods based on averaging (so called
ergodic sequence), that produce primal solutions in the limit, can
be found e.g. in  \cite{GusPat:14,KiwLar:07,LarPat:98}. Convergence
rate analysis for the dual subgradient method has been studied e.g.
in \cite{NedOzd:09}, where estimates  for suboptimality and feasibility
violation of an average primal sequence are provided.
In \cite{NecSuy:08} the authors have combined  a
dual fast gradient algorithm and a smoothing technique for solving
non-smooth dual problems and derived  rate of convergence of order
$\mathcal{O}\left(1/k\right)$, with $k$ denoting the
iteration counter,  for primal suboptimality and
feasibility violation for an average primal sequence.   Also, in
\cite{NecNed:13} the authors proposed inexact dual (fast) gradient
algorithms  for solving  dual problems and estimates of order
$\mathcal{O}\left(1/k\right)$ ($\mathcal{O}\left(1/k^2\right)$) in
an average primal sequence  are provided for  primal suboptimality
and feasibility violation. Convergence properties of a  dual fast
gradient algorithm were also analyzed in \cite{PatBem:12} in the
context of predictive control. However, most of the papers
enumerated above provide an approximate primal solution for convex
problems based on averaging.

\noindent There are very few papers deriving the iteration
complexity of dual first order methods using as an approximate
primal solution the last iterate of the algorithm (see e.g.
\cite{LuoTse:93a,NecNed:14a,BecNed:14,BecTeb:14}),
although from our practical experience we have observed that usually these methods are
converging faster in the primal last iterate than in a primal
average sequence. For example, for a dual fast gradient method, rate
of convergence of order $\mathcal{O}\left(1/k\right)$  in the last
iterate is provided in \cite{BecNed:14} under the assumptions of
Lipschitz continuity and strong convexity of the primal objective
function and primal linear constraints. From our knowledge first
result on the linear convergence of dual gradient method in the last
iterate was provided in \cite{LuoTse:93a} under a \textit{local}
error bound property of the dual. However, in \cite{LuoTse:93a}
linear convergence is proved only locally and  for dual gradient
method. Recently, in \cite{NecNed:14a} the authors show that, for
linearly constrained smooth convex problems satisfying a Slater type
condition, the dual problem has a global error
bound property. 
Moreover, in \cite{Tse:10} Tseng posed the question whether there
exist fast gradient schemes  that converge linearly on convex
problems having an error bound property.

\noindent Another strand of this literature uses augmented
Lagrangian based methods  \cite{HonLuo:12,LanMon:08,NedNec:12} or
Newton methods \cite{WeiOzd:10,NecSuy:09}. For example,
\cite{HonLuo:12} established a  linear convergence rate of
alternating direction method of multipliers using an error bound
condition that holds under specific assumptions on the primal
problem. In \cite{LanMon:08,NedNec:12} the iteration complexity of
inexact augmented Lagrangian methods is analyzed, where the inner
problems are solved approximately and the dual variables are updated
using dual (fast) gradient schemes.  In \cite{WeiOzd:10,NecSuy:09}
dual Newton algorithms are derived  under the assumption that the
primal objective function is self-concordant.

\noindent In conclusion, despite the fact that there are attempts to
analyze the convergence properties of dual  first order methods, the
results are dispersed, incomplete and many aspects have not been
fully studied. In particular, in practical applications the main
interest is in finding  approximate primal solutions. Moreover, we
need to characterize the convergence rate for these near-feasible
and near-optimal primal solutions. Finally, we are interested in
providing  schemes with fast convergence rate. These issues motivate
our work here, which provides 
a detailed convergence analyzes of dual first order methods for
solving conic convex~problems.

\vspace{0.04cm}

\noindent \textit{Contributions}. In this paper we  provide a
convergence analysis of  dual first order methods producing
approximate primal feasible and suboptimal solutions for conic
convex  problems. Our analysis is based on the Lipschitz gradient
property of the dual function or an error bound property of the dual
problem. Further, the iteration complexity analysis is based on two
types of approximate primal solutions:  the last primal iterate or
an average primal sequence. We prove that first order algorithms for
solving the dual problem have the following iteration complexity in
terms of primal suboptimality and infeasibility:

\noindent (i) for strongly convex primal objective functions we
prove: for dual gradient method a  sublinear convergence rate in
both, an average primal sequence (convergence rate of order
$\mathcal{O}(1/k)$), or the last primal iterate sequence
(convergence rate $\mathcal{O}(1/\sqrt{k})$)); for dual fast
gradient method a sublinear convergence rate in an average primal
sequence (convergence rate $\mathcal{O}(1/k^2)$), or the last primal
iterate sequence (convergence rate $\mathcal{O}(1/k)$).

\noindent (ii)  if we use regularization techniques we prove that
the convergence estimates  of  dual fast gradient method for both
primal sequences (the last iterate  and an average of iterates) have
the same order (up to a logarithmic factor).

\noindent (iii) if additionally the dual problem has an error bound
property,  then we prove that dual first order methods (including a
 fast gradient scheme with restart) converge \textit{globally} with linear rate
in the last primal iterate sequence (convergence rate
$\mathcal{O}(\theta^k)$, with $\theta<1$), a result which appears to
be new in this area.

\noindent (iv) finally, if the conic constraints are linear
constraints, then based on the properties of dual first order
methods and regularization techniques we  improve the previous
convergence rates of dual first order methods in the last iterate
with one order of magnitude.

\noindent An important feature of our results is that these rates of
convergence are  not only for the average of iterates but also  for
the latest iterate. This feature is of practical importance since
usually the  last iterates are employed in practical applications
and the present paper provides computational complexity certificates
for them.

\vspace{0.1cm}

\noindent \textit{Notations:} We work in the space $\rset^n$
composed by column vectors. For $u,v \in \rset^n$ we denote the
standard Euclidean  inner product $\langle u,v \rangle = u^T v$, the
Euclidean norm $\left \| u \right \|=\sqrt{\langle u,u \rangle}$ and
the projection of $u$ onto the convex set $X$ as $\left[u
\right]_X$. Further, $\text{dist}_X(u)$ denotes the distance from
$u$ to the convex set $X$, i.e. $\text{dist}_X(u) =\min_{x \in X}
\|x-u\|$. Moreover, for a matrix $G \in \rset^{m \times n}$ we use
the notation $\|G\|$ for the spectral norm.


\section{Problem formulation}
\label{sec_formulation}

We consider the following  conic convex  optimization problem:
\begin{align}
\label{eq_prob_princc} f^* = &\min_{u \in U} \; f(u) \quad
\text{s.t.}  \quad  G u + g \in \ca,
\end{align}
where $f: \rset^n \to \rset^{}$ is convex function, $G \in \rset^{p
\times n}$,  $\ca \subseteq \rset^p$ is a proper  cone  and $U
\subseteq \rset^n$ is a closed convex set. Moreover, we assume that
both sets $\ca$  and $U$  are simple, i.e. the projection on these
sets is easy.  Many engineering applications can be posed as
constrained convex  problems \eqref{eq_prob_princc} (e.g. network utility maximization
problem \cite{BecNed:14,XiaBoy:06}: $f$ is $\log$ function and $U$
is a box set;  optimal power flow problem \cite{ZimMur:11}: $f$
is quadratic function and $U$ is a box set;  model predictive
control  problem \cite{NecNed:13,NedNec:12,PatBem:12}: $f$ is quadratic
function and $U$ is a set described by linear equality constraints).
Thus, we are interested in deriving tight convergence  estimates
of dual first order methods for this  optimization model.  We denote with
$\ca^*  \subseteq \rset^p$ the corresponding dual cone of $\ca$,
i.e. $\ca^* =\{x: \; \langle x, u \rangle \geq 0 \;\; \forall u \in
\ca\}$. Further, for simplicity of the exposition we use the short
notation:
\[ g(u) = -Gu - g.   \]

\noindent Throughout the paper, we make the following assumption on
optimization problem \eqref{eq_prob_princc}:
\begin{assumption}
\label{as_strong} The function $f$ is $\sigma_{\mathrm{f}}$-strongly
convex w.r.t. the Euclidean norm and there exists a finite optimal
Lagrange multiplier $x^*$ for the conic constraints of
\eqref{eq_prob_princc}. \qed
\end{assumption}

\noindent Note that if the objective function $f$ is not strongly
convex,  we can apply smoothing techniques by adding a
regularization term to the convex function $f$ in order to obtain a
strongly convex approximation of it and a corresponding smooth
approximation of the dual function. Then, we can use a dual fast
gradient method  for maximizing the smooth approximation of the dual
function and then we can recover an approximate primal solution for
the original problem  (see e.g. \cite{NecSuy:08} for more details
regarding the iteration complexity estimates for this approach).
Furthermore, we can always guarantee the existence of a finite
optimal Lagrange multiplier $x^*$ provided that e.g. Slater
condition holds:  there exists $\tilde u \in \text{relint}(U)$ such
that $G \tilde u + g \in \text{int}(\ca)$.

\noindent Since there exists  a finite optimal Lagrange
multiplier $x^*$,  strong duality holds for optimization problem
\eqref{eq_prob_princc} (see \cite{RocWet:98}). In particular, we
have:
\begin{equation}
\label{eq_dual_prob} f^*=\max_{x \in \ca^*} d(x),
\end{equation}
where $d(x)$ denote the dual function of problem
\eqref{eq_prob_princc}:
\begin{equation}
\label{eq_dual_func} d(x)=\min_{u \in U} \lu (u,x) \quad (= f(u)  +  \langle x, g(u) \rangle).
\end{equation}

\noindent  We  denote by $X^* \subseteq \ca^*$ the set of optimal
solutions of dual problem \eqref{eq_dual_prob}, which is nonempty
and convex according to  Assumption  \ref{as_strong}. Since $f$ is
strongly convex function, the Lagrangian function $\lu(u,x) = f(u) +
\langle x, g(u) \rangle$ is also strongly convex. Then, the inner
problem $\min_{u \in U} \lu (u,x)$ has always a unique finite
optimal solution for any fixed $x \in \rset^p$. In conclusion, the
dual function $d$ has the effective domain the entire Euclidean
space $\rset^p$, i.e. $\text{dom} \ d = \rset^p$. Moreover, since
the minimizer of \eqref{eq_dual_func} for any fixed $x \in \rset^p$
is unique, from Danskin's theorem \cite{RocWet:98} we get that the
dual function $d$ is differentiable everywhere  and its gradient is
given by the following expression:
\begin{equation*}
\nabla d(x) = g(u(x)) \quad (= -G u(x) - g)   \quad \forall x \in
\rset^p,
\end{equation*}
where $u(x)$ denotes the unique optimal solution of the inner
problem \eqref{eq_dual_func}, i.e.:
\begin{equation}
\label{eq_inner_sol} u(x)=\arg\min_{u \in U} \lu(u,x).
\end{equation}
Moreover, from Theorem 1 in \cite{Nes:05} it follows immediately
that the dual gradient $\nabla d$ is Lipschitz continuous on $\ca^*$
with constant $L_\text{d} = \frac{\|G\|^2}{\sigma_\text{f}}$, i.e.:
\begin{align}
\label{lipd} \| \nabla d(x) - \nabla d({y})\| &  \leq
\frac{\|G\|^2}{\sigma_\text{f}} \| x - {y}\| \qquad \forall x, {y}
\in \ca^*.
\end{align}
From Lipschitz continuity of the dual gradient  \eqref{lipd} the
following inequality (so-called descent lemma) is valid
\cite{Nes:04,Nes:05}:
\begin{equation}
\label{eq_descent} d(x) \geq d({y})+ \left\langle \nabla d({y}), x -
{y}\right\rangle -\frac{L_{\text{d}}}{2}\|x - {y}\|^2 \quad \forall
x, {y} \in \ca^*.
\end{equation}

\noindent  In this paper  we analyze several  dual first order
methods for solving problem  \eqref{eq_prob_princc}  and derive  convergence
estimates for  dual and primal suboptimality and also for primal feasibility
violation, i.e. finding an $\epsilon$-primal-dual pair
$\left(\tilde{u},\tilde{x}\right) \in U \times \ca^*$ such that:
\begin{align}
\label{condition_outer} & ~~~~~ \text{dist}_\ca(G \tilde{u} + g)
 \leq \mathcal{O} (\epsilon), \;\;\;  \| \tilde{u}- u^* \| \leq
\mathcal{O} (\epsilon), \\
&-\mathcal{O}(\epsilon)\!\leq \!f(\tilde{u}) -\! f^* \!\!\leq\!
\mathcal{O} (\epsilon) ~~\text{and} ~~f^*-d(\tilde{x}) \leq
\mathcal{O} (\epsilon), \nonumber
\end{align}
where $\epsilon$ is a given accuracy and $u^*$ is the unique minimizer of problem  \eqref{eq_prob_princc}. Thus,  we  introduce the
following definition:
\begin{definition}
We say that $\tilde u \in U$ is an \textit{$\epsilon$-primal solution} for
the original convex problem  \eqref{eq_prob_princc} if we have the
following relations for primal infeasibility and  suboptimality:
\begin{align}
\label{epsilon_primal} & \text{dist}_\ca(G \tilde{u} + g)  \leq
\mathcal{O} (\epsilon) \;\;\;\; \text{and} \;\;\;\;  -\mathcal{O}(\epsilon) \leq f(\tilde{u}) - f^*
\leq  \mathcal{O} (\epsilon).
\end{align}
\end{definition}


\subsection{Preliminary results}
\noindent In this section we derive first some relations between the
optimal solution of the inner problem $u(x)$ and the dual function
$d(x)$. Then, we also derive some properties of the gradient map.
These results  will be used in the subsequent sections.

\noindent In the next lemma we derive some relations between the optimal solution of the
inner problem $u(x)$ and the dual function $d(x)$. These relations
have been proven in \cite{BecNed:14,BecTeb:14} for $\ca = \rset^n_+$
(non-negative orthant). For completeness, we also give a short
proof:
\begin{lemma}
\label{lemma_sc} Under Assumption \ref{as_strong}, the following
inequality holds:
\begin{align}
\label{ineq_x} \frac{\sigma_{\mathrm{f}}}{2}\| \bo{u}(x) - u^*\|^2
\leq f^* - d(x) \qquad \forall x \in \ca^*
\end{align}
and the primal feasibility violation  can be expressed  in terms of
$\| \bo{u}(x) - u^*\|$ as:
\begin{align}
\label{ineq_feas2}
 \text{dist}_\ca(G u(x) + g) \leq \|G\| \ \| \bo{u}(x) - u^* \|
 \qquad \forall x \in \ca^*.
\end{align}
\end{lemma}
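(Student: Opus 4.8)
The plan is to exploit the strong convexity of the Lagrangian $\lu(\cdot,x)$ together with the feasibility of $u^*$, and to reduce both bounds to this single structural fact. First I would record a quadratic-growth estimate at the constrained minimizer $u(x)$. Since $f$ is $\sif$-strongly convex and the term $\langle x, g(u)\rangle = \langle x, -Gu-g\rangle$ is affine in $u$, the function $u \mapsto \lu(u,x)$ is $\sif$-strongly convex for each fixed $x$. Combining its strong convexity with the optimality of $u(x)$ along the segment joining $u(x)$ to any $u \in U$ (so that no differentiability of $f$ is needed) yields
\[
\lu(u,x) \geq \lu(u(x),x) + \frac{\sif}{2}\|u - u(x)\|^2 = d(x) + \frac{\sif}{2}\|u - u(x)\|^2 \qquad \forall u \in U.
\]

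Next I would instantiate this at $u = u^*$, which belongs to $U$. The left-hand side equals $\lu(u^*,x) = f^* + \langle x, g(u^*)\rangle$. The key observation is the sign of the coupling term: $u^*$ is primal feasible, so $Gu^* + g \in \ca$, while $x \in \ca^*$; by the defining property of the dual cone $\langle x, Gu^*+g\rangle \geq 0$, hence $\langle x, g(u^*)\rangle \leq 0$ and therefore $\lu(u^*,x) \leq f^*$. Substituting into the growth estimate and rearranging gives precisely $\frac{\sif}{2}\|u(x)-u^*\|^2 \leq f^* - d(x)$, which is \eqref{ineq_x}.

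For the feasibility bound \eqref{ineq_feas2} I would again use that $u^*$ is feasible. Starting from the variational definition $\text{dist}_\ca(Gu(x)+g) = \min_{z \in \ca}\|z - (Gu(x)+g)\|$ and picking the admissible candidate $z = Gu^*+g \in \ca$, I obtain
\[
\text{dist}_\ca(Gu(x)+g) \leq \|(Gu^*+g)-(Gu(x)+g)\| = \|G(u^*-u(x))\| \leq \|G\|\,\|u(x)-u^*\|,
\]
which is the claim.

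The argument is largely routine; the only step that requires genuine care is the sign of the coupling term $\langle x, g(u^*)\rangle$, where primal feasibility ($Gu^*+g \in \ca$) and dual feasibility ($x \in \ca^*$) must be combined correctly through the definition of $\ca^*$. Everything else follows from the quadratic-growth property of strongly convex functions over convex sets and the elementary fact that the distance to $\ca$ is dominated by the distance to any particular feasible point.
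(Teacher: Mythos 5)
Your proof is correct and follows essentially the same route as the paper: both rest on the quadratic-growth inequality for the $\sif$-strongly convex Lagrangian at its minimizer $u(x)$, instantiated at $u=u^*$, with the coupling term $\langle x, g(u^*)\rangle \le 0$ handled via primal feasibility of $u^*$ and the dual-cone property of $x$ (the paper phrases this as $\nabla d(x^*)=g(u^*)\in -\ca$, which is the same fact), and both obtain \eqref{ineq_feas2} by taking $Gu^*+g\in\ca$ as the candidate point in the distance minimization.
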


\begin{proof} First, let us recall that $g(u(x)) = - G u(x) - g$ and the
following relations:
\[ d(x) = \lu(u(x),x) =f(\bo{u}(x)) + \langle
x, g( \bo{u}(x))  \rangle \;\;  \text{and} \;\; \nabla d(x)=
g(\bo{u}(x)). \] Since $f(u)$ is $\sigma_{\mathrm{f}}$-strongly
convex, it follows that $\mathcal{L}(u,x)$ is also
$\sigma_{\mathrm{f}}$-strongly convex in the variable $\bo{u}$ for
any fixed $x \in \ca^*$, which gives the following inequality:
\begin{align}
\label{eq_scL} \mathcal{L}(u,x) \geq \mathcal{L}(u(x),x) +
\frac{\sigma_{\mathrm{f}}}{2}\|\bo{u}(x) - u\|^2 \quad \forall u \in
U, x \in \ca^*.
\end{align}
Taking now $u = u^* = u(x^*)$ in  the previous inequality \eqref{eq_scL} and
using  that $\nabla d(x^*) = g(u^*)   \in - \ca$ for any $x^* \in X^*$
and that $\langle x, \nabla d(x^*) \rangle \leq 0$ for any $x \in {\ca^*}$,
we have:
\begin{align*}
 \frac{\sigma_{\mathrm{f}}}{2}\|\bo{u}(x) - \bo{u}^*\|^2
&\leq \lu(\bo{u}^*,x) - \lu(u(x),x) = f(u^*)+ \langle x, \nabla
d(x^*)\rangle - d(x) \leq f^*-d(x),
\end{align*}
valid for all $x \in {\ca^*}$.  We now express the primal
feasibility violation  in terms of $\| \bo{u}(x) - u^*\|$ for any $x
\in {\ca^*}$. Indeed, using that $u(x^*) = u^*$ and that $G u^* + g
\in \ca$ we get:
\begin{align*}
\text{dist}_\ca(G u(x) + g) & \leq \| G u(x) + g - (G u^* + g ) \|
 \leq \|G\| \| \bo{u}(x) - u^* \|.
\end{align*}
These relations prove the statements of the lemma.
\end{proof}

\noindent We now express the   primal suboptimality in terms of $\|
\bo{u}(x) - u^*\|$, a result which appears to be new:

\begin{lemma}
Under Assumption \ref{as_strong}, the following inequality holds:
\begin{align}
\label{ineq_opt} |f(u(x)) - f^*| \leq \|G\| \left( \|x - x^*\| +
\|x^*\| \right )   \| \bo{u}(x) -  u^* \| \quad \forall x \in {\ca^*}, \;
x^* \in X^*.
\end{align}
\end{lemma}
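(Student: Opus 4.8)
The plan is to bound the two signed directions of $f(u(x)) - f^*$ separately, extracting each from a different optimality inequality for an inner minimizer. The only ingredients I would use are: (i) that $u(x)$ solves $\min_{u\in U}\lu(\cdot,x)$ and that $u^*=u(x^*)$ solves $\min_{u\in U}\lu(\cdot,x^*)$, both from \eqref{eq_inner_sol}; (ii) the algebraic identity $g(u^*)-g(u(x)) = G(u(x)-u^*)$, valid because the affine constant in $g(u)=-Gu-g$ cancels; and (iii) strong duality in the form $f(u^*)=f^*$ (already invoked in the proof of Lemma~\ref{lemma_sc}). Notably, no cone or complementarity argument is needed here.

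First I would establish the upper bound. Since $u(x)$ minimizes $\lu(\cdot,x)$ over $U$ and $u^*\in U$, we have $\lu(u(x),x)\le \lu(u^*,x)$, which rearranges, using $f(u^*)=f^*$ and the identity above, to
\[ f(u(x)) - f^* \leq \langle x,\, g(u^*) - g(u(x)) \rangle = \langle x,\, G(u(x)-u^*) \rangle. \]
Applying Cauchy--Schwarz together with $\|G(u(x)-u^*)\|\le \|G\|\,\|u(x)-u^*\|$ and the triangle inequality $\|x\|\le \|x-x^*\|+\|x^*\|$ then gives $f(u(x))-f^* \le \|G\|\bigl(\|x-x^*\|+\|x^*\|\bigr)\|u(x)-u^*\|$.

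For the reverse direction I would anchor the argument at $x^*$ instead: since $u^*=u(x^*)$ minimizes $\lu(\cdot,x^*)$ over $U$ and $u(x)\in U$, we have $\lu(u^*,x^*)\le \lu(u(x),x^*)$, which rearranges to
\[ f^* - f(u(x)) \leq \langle x^*,\, g(u(x)) - g(u^*) \rangle = \langle x^*,\, G(u^*-u(x)) \rangle \leq \|G\|\,\|x^*\|\,\|u(x)-u^*\|. \]
Since $\|x^*\|\le \|x-x^*\|+\|x^*\|$, the right-hand side is again dominated by the target bound, and combining the two directions yields the claim on $|f(u(x))-f^*|$.

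The manipulations are elementary, so the genuine point — the ``hard part'' only in a structural sense — is recognizing that the two sides of the absolute value must come from two distinct optimality inequalities: the upper bound from $\lu(\cdot,x)$ at the current multiplier $x$ (which is what forces the factor $\|x\|\le\|x-x^*\|+\|x^*\|$), and the lower bound from $\lu(\cdot,x^*)$ at the optimal multiplier $x^*$ (which produces the factor $\|x^*\|$). A single inequality cannot reproduce the asymmetric two-term factor $\|x-x^*\|+\|x^*\|$, so choosing the correct anchor point for each direction is the key decision in the proof.
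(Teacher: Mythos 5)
Your proof is correct, and it differs from the paper's in an instructive way on one of the two halves. For the lower bound the two arguments essentially coincide: both anchor at $x^*$, use that $u^*=u(x^*)$ minimizes $\lu(\cdot,x^*)$ over $U$, and finish with Cauchy--Schwarz; the paper routes this through $d(x^*)$ and the complementarity condition $\langle x^*, g(u^*)\rangle = 0$, but the resulting inequality \eqref{ineq_optl} is exactly yours. For the upper bound, however, the paper works on the dual side: it starts from $d(x) = f(u(x)) + \langle x, \nabla d(x)\rangle$, subtracts $f^* = d(x^*)$, inserts $\langle x^*, \nabla d(x^*)\rangle = 0$, and uses concavity of $d$ to discard $d(x)-d(x^*)-\langle x-x^*,\nabla d(x^*)\rangle \le 0$, arriving at $f(u(x)) - f^* \le \langle x, \nabla d(x^*) - \nabla d(x)\rangle$ in \eqref{ineq_feas3}. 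You reach the very same inequality in one line from the primal optimality $\lu(u(x),x) \le \lu(u^*,x)$, since $\nabla d(x) = g(u(x))$ and $\nabla d(x^*) = g(u^*)$. Your route is more elementary and pleasingly symmetric --- both halves are instances of ``minimality of the inner solution at an anchor multiplier,'' which also explains cleanly where the asymmetric factor $\|x-x^*\|+\|x^*\|$ comes from. What the paper's dual-side chain buys is the intermediate form $\|x\|\,\|g(u^*)-g(u(x))\|$, which it reuses later verbatim (e.g.\ in Theorems \ref{th_lingdglast} and \ref{th_linghdfgdglast}, where for linear constraints $\|g(u^*)-g(u(x))\| = \|Gu^{2k}+g\|$ is controlled directly by the dual gradient rather than by $\|u(x)-u^*\|$); your derivation contains the same quantity and could keep it as well, so nothing is lost. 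One small caveat on your remark that no complementarity argument is needed: that is true of your displayed manipulations, but the ingredient you do rely on --- the identification $u^*=u(x^*)$ together with $f(u^*)=f^*$ --- is precisely where strong duality (and, implicitly, complementarity) enters; the paper merely makes that dependence explicit.
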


\begin{proof} Firstly, using the complementarity condition
$\langle x^*, g(u^*)  \rangle =0$ we get:
\begin{align*}
\langle x^*, g(u^*)  \rangle + f(u^*) & = d(x^*)  = \min_{u \in U} [
f(u) + \langle x^*, g(u) \rangle]  \leq f(u(x)) + \langle x^*,
g(u(x))  \rangle,
\end{align*}
which leads to the following relation:
\[  f(u(x)) - f^* \geq \langle x^*,  g(u^*) - g(u(x))  \rangle. \]
Using the Cauchy-Schwartz inequality we derive:
\begin{align}
\label{ineq_optl} f(u(x)) - f^* & \geq - \|x^*\| \|g(u^*) - g(u(x))
\| \geq  - \|x^*\| \ \|G\| \ \|\bo{u}(x) - u^* \|.
\end{align}
Secondly, from the definition of the dual function we have:
\[ d(x) = f(u(x)) + \langle x,  \nabla d(x) \rangle. \] Subtracting $f^* = d(x^*)$ from both sides and
using the complementarity condition $ \langle x^*, \nabla d(x^*)
\rangle =0 $ we get:
\begin{align}
f(u(x)) - f^* & = d(x) - d(x^*) - \langle
x, \nabla d(x) \rangle \nonumber \\
& = d(x) - d(x^*) - \langle x - x^*, \nabla d(x^*)
\rangle + \langle x, \nabla
d(x^*) - \nabla d(x) \rangle \nonumber\\
& \leq \langle x, \nabla d(x^*) - \nabla d(x) \rangle  \leq \| x \|
\cdot \|g(u^*)  - g(u(x)) \| \nonumber\\
& \leq \| x \|  \|G\| \| \bo{u}(x) - u^* \|, \label{ineq_feas3}
\end{align}
valid for all $x \in {\ca^*}$, where in the first inequality we used
concavity of dual function $d$, in the second inequality the
relation $\nabla d(x) = g(u(x)) = - G u(x) - g$ and Cauchy-Schwartz
inequality and in the third inequality a property of the Euclidean
norm. In conclusion, using the triangle inequality for vector norms,
we obtain the following inequality:
\begin{align}
\label{ineq_optr} & f(u(x)) - f^* \leq  \|G\| \left( \| x - x^* \|
\!+\! \|x^*\| \right)  \| \bo{u}(x) - u^* \| \quad \forall x \in {\ca^*},
\; x^* \in X^*.
\end{align}
Combining \eqref{ineq_optl} and \eqref{ineq_optr} we obtain the
bound on primal suboptimality   \eqref{ineq_opt}.
\end{proof}

\noindent Note that, based on our derivations from above,  we are
able to characterize primal suboptimality \eqref{ineq_opt} without
assuming any Lipschitz property on $f$ as opposed to the results in
\cite{BecNed:14}, where the authors had to require Lipschitz
continuity of $f$ for providing estimates on primal suboptimality.
However, for many applications $U$ is unbounded set and  $f$ is quadratic function
(e.g. in model predictive control $f$ is quadratic and $U$ might be a set described by linear equality constraints \cite{NecNed:13,PatBem:12})  and thus it is not Lipschitz continuous, so that our theory covers this important case.

\noindent Further, let us  introduce the notion of gradient map
denoted  $\nabla^+ d(x)$ and the gradient step from $x$ denoted
$x^+$ (see also \cite{Nes:04}):
\begin{equation}
\label{eq_gradient_map} \nabla^+ d(x)=\left[x + \frac{1}{L_\text{d}}
\nabla d(x)\right]_{\ca^*} - x   \quad \text{and} \quad x^+ =
\left[x + \frac{1}{L_\text{d}} \nabla d(x)\right]_{\ca^*}.
\end{equation}
Clearly, $x^* \in X^*$ if and only if $\nabla^+ d(x^*) =0$ and
$\nabla^+ d(x) =  x^+ - x$. Next lemma proves that the norm of the
gradient map is  decreasing along a gradient step, i.e.:

\begin{lemma}
\label{lemma2_dg} Under Assumption \ref{as_strong} the following
inequality holds:
\begin{align}
\label{decrease_gm} \|\nabla^+ d(x^+)\| \leq \|\nabla^+ d(x)\| \quad
\forall x \in \ca^*.
\end{align}
\end{lemma}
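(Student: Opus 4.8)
The plan is to recast \eqref{decrease_gm} in terms of the gradient-step operator $T(x) := \left[x + \frac{1}{L_{\text{d}}}\nabla d(x)\right]_{\ca^*}$ and to show that $T$ is nonexpansive on $\ca^*$. From \eqref{eq_gradient_map} we have $x^+ = T(x)$ and $\nabla^+ d(x) = T(x) - x$. The crucial algebraic observation is that, since $T(x) = x^+$,
\begin{equation*}
\nabla^+ d(x^+) = T(x^+) - x^+ = T(x^+) - T(x),
\end{equation*}
whereas $\nabla^+ d(x) = T(x) - x = x^+ - x$. Hence \eqref{decrease_gm} is \emph{exactly} the statement $\|T(x^+) - T(x)\| \leq \|x^+ - x\|$, so it suffices to prove $\|T(x) - T(y)\| \leq \|x - y\|$ for all $x,y \in \ca^*$ and then apply it to the pair $(x^+,x)$, both of which lie in $\ca^*$.

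To establish nonexpansiveness of $T$, I would factor $T = [\,\cdot\,]_{\ca^*} \circ G$ with $G(x) := x + \frac{1}{L_{\text{d}}}\nabla d(x)$. Since the Euclidean projection onto the convex set $\ca^*$ is nonexpansive, it is enough to show that $G$ is nonexpansive. Expanding the square gives
\begin{align*}
\|G(x) - G(y)\|^2 = {} & \|x - y\|^2 + \frac{2}{L_{\text{d}}}\langle \nabla d(x) - \nabla d(y),\, x - y\rangle \\
& + \frac{1}{L_{\text{d}}^2}\|\nabla d(x) - \nabla d(y)\|^2 .
\end{align*}
Concavity of $d$ enters through the cocoercivity bound
\begin{equation*}
\langle \nabla d(x) - \nabla d(y),\, x - y\rangle \leq -\frac{1}{L_{\text{d}}}\|\nabla d(x) - \nabla d(y)\|^2 \qquad \forall x,y \in \ca^*;
\end{equation*}
substituting it collapses the last two terms to $-\frac{1}{L_{\text{d}}^2}\|\nabla d(x) - \nabla d(y)\|^2 \leq 0$, yielding $\|G(x) - G(y)\| \leq \|x - y\|$ and hence the nonexpansiveness of $T$.

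The main obstacle is therefore the cocoercivity estimate, the concave analogue of the Baillon–Haddad theorem, which I would derive directly from the descent lemma \eqref{eq_descent} rather than cite. Fixing $y$ and applying \eqref{eq_descent} to the concave function $x \mapsto d(x) - \langle \nabla d(y), x\rangle$, whose gradient $\nabla d(x) - \nabla d(y)$ vanishes at $y$ so that $y$ is its global maximizer, and evaluating that function at the point $x + \frac{1}{L_{\text{d}}}\left(\nabla d(x) - \nabla d(y)\right)$ gives
\begin{equation*}
d(y) - \langle \nabla d(y), y\rangle \geq d(x) - \langle \nabla d(y), x\rangle + \frac{1}{2L_{\text{d}}}\|\nabla d(x) - \nabla d(y)\|^2 .
\end{equation*}
Writing the symmetric inequality with $x$ and $y$ interchanged and adding the two produces exactly the cocoercivity bound. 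One point to check is that the auxiliary point stays where \eqref{eq_descent} applies; since $\text{dom}\ d = \rset^p$ and $\nabla d$ is Lipschitz, the descent inequality extends beyond $\ca^*$, so this causes no difficulty. Assembling the three steps—reduction to nonexpansiveness, nonexpansiveness of $G$ via cocoercivity, and cocoercivity from the descent lemma—completes the proof of \eqref{decrease_gm}.
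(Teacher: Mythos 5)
Your proof is correct and follows essentially the same route as the paper: both reduce \eqref{decrease_gm} to nonexpansiveness of the projected gradient-step map $T$ at the pair $(x^+,x)$, obtained by combining nonexpansiveness of the projection onto $\ca^*$ with cocoercivity of the concave dual gradient. The only differences are presentational: the paper cites the cocoercivity bound from \cite{Nes:04} and establishes nonexpansiveness of the forward step via a complete-the-square plus triangle-inequality argument, whereas you expand $\|G(x)-G(y)\|^2$ directly and derive cocoercivity from the descent lemma \eqref{eq_descent}, which is slightly cleaner and more self-contained.
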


\begin{proof}
Since the dual function $d$ has $L_\text{d}$-Lipschitz gradient on
$\ca^*$ (see \eqref{lipd}) and is concave, the following relation
holds \cite{Nes:04}:
\[  \| \nabla d(y) - \nabla d(x) \|^2 \leq L_\text{d}
\langle \nabla d(y) - \nabla d(x), x-y \rangle \quad \forall x,y \in
\ca^*. \]

\noindent If we replace in the previous inequality $y$ with a
gradient step in $x$, i.e. $y = x^+ = [x + \frac{1}{L_\text{d}}
\nabla d(x)]_{\ca^*}$, and arranging the terms we get:
\[ \| \nabla d(x^+) - \nabla d(x) + \frac{L_\text{d}}{2} (x^+ - x) \|
\leq \frac{L_\text{d}}{2} \|x^+ - x\|.   \] Grouping the terms
appropriately we obtain:
\[ \| (x^+ + \frac{1}{L_\text{d}}\nabla d(x^+)) - (x + \frac{1}{L_\text{d}} \nabla d(x)) + \frac{1}{2} (x - x^+) \|
\leq \frac{1}{2} \|x^+ - x\|. \] Using now the  triangle inequality
for a norm $\|z\| - \|w\| \leq \| z+ w\|$ we get:
\[ \| (x^+ + \frac{1}{L_\text{d}}\nabla d(x^+)) -
(x + \frac{1}{L_\text{d}} \nabla d(x)) \| \leq  \|x^+ - x\|.
\]
Finally, since the projection is  non-expansive  we obtain:
\[ \| [x^+ + \frac{1}{L_\text{d}}\nabla d(x^+)]_{\ca^*} -
[x + \frac{1}{L_\text{d}} \nabla d(x)]_{\ca^*} \| \leq  \|x^+ - x\|.
\]
Combining the previous inequality with  the definitions of $\nabla ^+
d$ and of $x^+$, we obtain the statement of the lemma.
\end{proof}

\noindent Finally we show a  relation between the dual gradient and
the gradient map:
\begin{lemma}
\label{lemma2_dg} Under Assumption \ref{as_strong} the following
inequality holds:
\begin{align}
\label{decrease_ggm} \text{dist}_{\ca} \left(- \nabla d(x) \right) \leq L_\text{d}
\|\nabla^+ d(x)\|  \qquad \forall x \in \ca^*.
\end{align}
\end{lemma}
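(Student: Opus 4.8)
The plan is to exploit the variational characterization of the projection onto the cone $\ca^*$ that implicitly defines the gradient step $x^+$. Writing $z = x + \frac{1}{L_\text{d}} \nabla d(x)$, so that $x^+ = [z]_{\ca^*}$, the key fact is that the residual of a projection onto a closed convex cone lands in the polar of that cone, i.e. $z - x^+ \in (\ca^*)^\circ$. Since $\ca$ is a proper cone we have $(\ca^*)^* = \ca$, and hence the polar cone of $\ca^*$ is $(\ca^*)^\circ = -\ca$. Therefore the projection residual satisfies $z - x^+ \in -\ca$, which is the structural ingredient that makes the whole estimate work.

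Next I would rewrite this membership in terms of the dual gradient and the gradient map. Recalling from \eqref{eq_gradient_map} that $x^+ = x + \nabla^+ d(x)$, we get
\[
z - x^+ = \frac{1}{L_\text{d}} \nabla d(x) - \nabla^+ d(x) \in -\ca.
\]
Multiplying by $L_\text{d} > 0$ and using that $-\ca$ is a cone (hence invariant under positive scaling), this is equivalent to $L_\text{d} \nabla^+ d(x) - \nabla d(x) \in \ca$. This exhibits an explicit feasible point $w := L_\text{d} \nabla^+ d(x) - \nabla d(x) \in \ca$ whose distance to the point $-\nabla d(x)$ is immediate to compute.

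Finally, bounding the distance to $\ca$ by the distance to this particular feasible point $w$, I would conclude
\[
\text{dist}_{\ca}(-\nabla d(x)) \leq \| -\nabla d(x) - w \| = \| -L_\text{d} \nabla^+ d(x) \| = L_\text{d} \|\nabla^+ d(x)\|,
\]
which is exactly the claimed inequality \eqref{decrease_ggm}. The only delicate point in the whole argument is identifying the polar cone of $\ca^*$ as $-\ca$ and invoking the correct projection characterization (the Moreau decomposition for cones) with the signs kept straight; once the residual membership $z - x^+ \in -\ca$ is secured, the remainder is a one-line substitution and I do not anticipate any further obstacle.
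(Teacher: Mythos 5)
Your proof is correct and follows essentially the same route as the paper's: both arguments hinge on the key fact that the projection residual $[z]_{\ca^*} - z$ lies in $-\ca$ (which you justify via the Moreau decomposition and the bipolar identity $(\ca^*)^* = \ca$, while the paper derives the equivalent statement $[z]_{\ca^*} - z \in \ca$, with its sign convention, directly from the variational characterization of the projection), and both then bound $\text{dist}_{\ca}\left(-\nabla d(x)\right)$ by the distance to the explicit feasible point built from this residual. The only cosmetic difference is that you scale the cone membership by $L_\text{d}$ before estimating the distance, whereas the paper estimates the distance of the point $-\frac{1}{L_\text{d}}\nabla d(x)$ first and then uses positive homogeneity of $\text{dist}_{\ca}$ on the cone.
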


\begin{proof}
First, we derive a property of the projection, namely:
\begin{align}
\label{prop_proj} [y]_{\ca^*} - y \in \ca \quad \forall y \in
\rset^p.
\end{align}
Indeed, $[y]_{\ca^*} \in \arg \min_{z \in \ca^*} \|z - y\|$ if and
only if $\langle [y]_{\ca^*} -y, z - [y]_{\ca^*} \rangle \geq 0$ for
all $z \in \ca^*$. Hence $\langle [y]_{\ca^*} -y, z \rangle  \geq
\langle [y]_{\ca^*} - y, [y]_{\ca^*} \rangle$ for all $z \in \ca^*$.
Since the left hand side of the last inequality is bounded  below
for all $z \in \ca^*$, it follows that $[y]_{\ca^*} -y \in \ca$.
Then, we have:
\begin{align*}
\|\nabla^+ d(x)\| & = \| x^+ -x \| =  \|[x + \frac{1}{L_\text{d}}
\nabla
d(x)]_{\ca^*} - x\| \\
& = \| \underbrace{[x + \frac{1}{L_\text{d}} \nabla d(x)]_{\ca^*}  -
(x + \frac{1}{L_\text{d}} \nabla d(x))}_{\eqref{prop_proj} \;\;
\Rightarrow \;\; \in \ca} - (-\frac{1}{L_\text{d}}
\nabla d(x)) \| \\
& \geq \text{dist}_{\ca}(-\frac{1}{L_\text{d}} \nabla d(x))) =
\frac{1}{L_\text{d}} \text{dist}_{\ca}(- \nabla d(x))).
\end{align*}
Since $\nabla d(x) =  - G u(x) - g$, we also obtain a bound for
primal infeasibility:
\begin{align}
\label{map_feas} \text{dist}_{\ca}(G u(x) + g) \leq L_\text{d} \|
x^+ - x\|  \qquad \forall x \in \ca^*.
\end{align}
\end{proof}


\subsection{Dual first order algorithms}
\noindent In this section we present  a general framework for  dual first order
methods generating approximate primal feasible and primal optimal
solutions for the convex problem \eqref{eq_prob_princc}. This
general framework covers important particular algorithms \cite{Nes:04,BecTeb:14}:
e.g. dual gradient algorithm,  dual fast gradient algorithm,
hybrid fast gradient/gradient algorithm or restart fast gradient algorithm,
as we will see in the next sections. Thus, we will analyze the iteration complexity of some particular cases   of  the following general dual first order method that updates two dual sequences $(x^k, y^k)$ and one primal sequence $u^k$ as follows:
\begin{center}
\framebox{
\parbox{7.4cm}{
\begin{center}
\textbf{ Algorithm {\bf (DFO)} }
\end{center}
{Given $x^0 = y^1 \in {\ca^*}$, for $k\geq 1$ compute:}
\begin{enumerate}
\item $u^k = \arg \min\limits_{u \in U} \mathcal{L}(u,y^k)$
\item ${x}^{k}=\left[y^k+ \alpha_k \nabla d(y^k)\right]_+$,
\item $y^{k+1} = x^k + \frac{\theta_k -1 }{\theta_{k+1}} (x^k - x^{k-1})$.
\end{enumerate}
}}
\end{center}
where $\alpha_k$ and $\theta_k$ are the parameters  of the method
and in the next sections we show how we can choose them in an
appropriate way. Recall also the following relations: $u^k = u(y^k)$
and ${\nabla} d(y^k)= g(\bo{u}^k)$. Note that if we cannot solve the inner problem $\min_{u \in U} \mathcal{L}(u,y^k)$ (step 1 in Algorithm \textbf{(DFO)}) exactly, but approximatively with some inner accuracy, then our framework allows us to use
approximate solutions $u^k$ and inexact dual gradients. This is
beyond the scope of the present paper, but for more details see e.g.
\cite{NecNed:13,NedNec:12,WanLin:13}.


\section{Rate of convergence of dual  gradient algorithm}
\label{sec_dfom} \noindent In this section we consider a variant of
Algorithm {\bf (DFO)}, where $\theta_k =1$ for all $k \geq 0$. Under
this choice for the parameter $\theta_k$ we have that $y^{k} =
x^{k-1}$ and thus we obtain the following dual gradient algorithm
with variable step size  $\alpha_k$:
\begin{center}
\framebox{
\parbox{6.5cm}{
\begin{center}
\textbf{ Algorithm {\bf (DG)} }
\end{center}
{Given $x^0 \in {\ca^*}$, for $k\geq 0$ compute:}
\begin{enumerate}
\item $u^k = \arg \min\limits_{u \in U} \mathcal{L}(u,x^k)$
\item ${x}^{k+1}=\left[x^k+ \alpha_k \nabla d(x^k)\right]_{\mathcal{K}^*}$,
\end{enumerate}
}}
\end{center}
where $\frac{1}{L_G} \leq \alpha_k \leq \frac{1}{L_{\text{d}}}$ such
that $L_G \geq L_{\text{d}}$ and   recall that ${\nabla} d(x^k)=
g(\bo{u}^k)$. Let us now derive some important properties of the
dual gradient method that will be useful in the following sections.

\begin{lemma}
\label{lemma1_dg} Let Assumption \ref{as_strong} hold and the
sequence  $\left(x^k\right)_{k\geq 0}$ be generated by Algorithm
{\bf (DG)}. Then, the following inequalities are valid:
\begin{align*}
\|x^k - x^*\| & \leq \|x^0 - x^*\|,  \qquad  d(x^{k+1}) \geq d(x^k)
+\frac{L_\text{d}}{2} \| x^{k+1} - x^k \|^2, \\
& \|x^{k+1}\|^2 \leq \|x^k\|^2 + 2 \alpha_k (f^* - f(u^k)) \quad
\forall k \geq 0, x^* \in X^*.
\end{align*}
\end{lemma}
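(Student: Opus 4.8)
The plan is to establish the three estimates separately, in each case exploiting that $d$ is concave with $L_{\text{d}}$-Lipschitz gradient (so the descent lemma \eqref{eq_descent} and weak duality $d(x)\le f^*$ for all $x\in\ca^*$, from \eqref{eq_dual_prob}, are at my disposal), together with the step-size bound $\alpha_k\le 1/L_{\text{d}}$ and the non-expansiveness and obtuse-angle (variational) characterization of the projection onto the cone $\ca^*$. For the contraction estimate $\|x^k-x^*\|\le\|x^0-x^*\|$ I would argue by induction, reducing to the one-step bound $\|x^{k+1}-x^*\|\le\|x^k-x^*\|$. Using the fixed-point characterization of optimality $x^*=[x^*+\alpha_k\nabla d(x^*)]_{\ca^*}$ and non-expansiveness, I get $\|x^{k+1}-x^*\|\le\|(x^k-x^*)+\alpha_k(\nabla d(x^k)-\nabla d(x^*))\|$. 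Expanding the square and inserting the co-coercivity inequality $\langle\nabla d(x^k)-\nabla d(x^*),x^k-x^*\rangle\le-\tfrac{1}{L_{\text{d}}}\|\nabla d(x^k)-\nabla d(x^*)\|^2$ (the same relation used in the proof of \eqref{decrease_gm}) produces the coefficient $\alpha_k(\alpha_k-2/L_{\text{d}})$ in front of $\|\nabla d(x^k)-\nabla d(x^*)\|^2$, which is nonpositive since $\alpha_k\le 1/L_{\text{d}}<2/L_{\text{d}}$.

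For the ascent estimate I would apply the descent lemma \eqref{eq_descent} with $x=x^{k+1}$ and $y=x^k$, yielding $d(x^{k+1})\ge d(x^k)+\langle\nabla d(x^k),x^{k+1}-x^k\rangle-\tfrac{L_{\text{d}}}{2}\|x^{k+1}-x^k\|^2$. Testing the obtuse-angle inequality $\langle x^k+\alpha_k\nabla d(x^k)-x^{k+1},w-x^{k+1}\rangle\le 0$ at $w=x^k\in\ca^*$ gives $\langle\nabla d(x^k),x^{k+1}-x^k\rangle\ge\tfrac{1}{\alpha_k}\|x^{k+1}-x^k\|^2$. Combining the two and using $1/\alpha_k\ge L_{\text{d}}$ leaves the coefficient $\bigl(1/\alpha_k-L_{\text{d}}/2\bigr)\ge L_{\text{d}}/2$ in front of $\|x^{k+1}-x^k\|^2$, which is precisely the claimed ascent inequality.

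The third estimate is the \emph{main obstacle}, because the crude bound $\|x^{k+1}\|\le\|x^k+\alpha_k\nabla d(x^k)\|$ (non-expansiveness against $0\in\ca^*$) is too lossy: it would require $d(x^k)+\tfrac{\alpha_k}{2}\|\nabla d(x^k)\|^2\le f^*$, which fails near the optimum whenever the conic constraint is active and $\nabla d(x^*)\ne 0$. The decisive extra ingredient is that $\ca^*$ is a cone: testing the obtuse-angle inequality at $w=0$ and at $w=2x^{k+1}$ (both in $\ca^*$) forces the orthogonality identity $\|x^{k+1}\|^2=\langle x^{k+1},\,x^k+\alpha_k\nabla d(x^k)\rangle$, equivalently $\|x^{k+1}\|^2=\|x^k\|^2+2\alpha_k\langle\nabla d(x^k),x^{k+1}\rangle-\|x^{k+1}-x^k\|^2$. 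I would then split $\langle\nabla d(x^k),x^{k+1}\rangle=\langle\nabla d(x^k),x^k\rangle+\langle\nabla d(x^k),x^{k+1}-x^k\rangle$, use $\langle\nabla d(x^k),x^k\rangle=d(x^k)-f(u^k)$ from \eqref{eq_dual_func}, and bound the remaining inner product via the descent lemma and weak duality as $\langle\nabla d(x^k),x^{k+1}-x^k\rangle\le d(x^{k+1})-d(x^k)+\tfrac{L_{\text{d}}}{2}\|x^{k+1}-x^k\|^2\le f^*-d(x^k)+\tfrac{L_{\text{d}}}{2}\|x^{k+1}-x^k\|^2$. Substituting, the $\pm 2\alpha_k d(x^k)$ terms cancel and the residual quadratic term carries the coefficient $(\alpha_k L_{\text{d}}-1)\|x^{k+1}-x^k\|^2\le 0$ (again by $\alpha_k\le 1/L_{\text{d}}$), giving exactly $\|x^{k+1}\|^2\le\|x^k\|^2+2\alpha_k(f^*-f(u^k))$. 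Throughout, the recurring mechanism is the interplay between the step-size restriction $\alpha_k\le 1/L_{\text{d}}$ and the Lipschitz/concavity bounds; what is special to the third estimate, and the part I expect to require the most care, is recognizing that the conic orthogonality identity is needed to retain the negative term $-\|x^{k+1}-x^k\|^2$ that the argument hinges on.
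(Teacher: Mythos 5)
Your proposal is correct, and all three estimates go through as you describe; but your route differs in structure from the paper's. The paper proves everything from a single master inequality: combining the obtuse-angle property of the projection with the descent lemma \eqref{eq_descent} \emph{once}, it obtains
\[
\| x^{k+1} - x \|^2 \leq \| x^k - x \|^2   + 2
\alpha_k \left( d(x^{k+1}) - d(x^k) - \langle \nabla d(x^k), x - x^k
\rangle \right) \qquad \forall x \in \ca^*,
\]
(this is \eqref{dg_prr}) and then simply instantiates $x = x^*$ (using concavity of $d$), $x = x^k$, and $x = 0$ (using weak duality and the identity $f(u^k) = d(x^k) - \langle \nabla d(x^k), x^k\rangle$). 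Your second and third arguments are essentially these instantiations in disguise — same ingredients, arranged per-estimate rather than factored through \eqref{dg_prr}. Your first argument is genuinely different: instead of the master inequality plus concavity, you use the fixed-point characterization $x^* = [x^* + \alpha_k \nabla d(x^*)]_{\ca^*}$, non-expansiveness of the projection, and co-coercivity of $\nabla d$; this is the classical ``projected gradient operator is non-expansive'' argument, and it buys a slightly stronger conclusion (contraction for any $\alpha_k < 2/L_\text{d}$, not just $\alpha_k \le 1/L_\text{d}$). What the paper's factorization buys is economy and reuse: \eqref{dg_prr} is invoked again later (e.g.\ in Theorem \ref{theorem_dual_optim_dg} for the linear convergence under the error bound), so proving it once pays off. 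One small overstatement on your side: for the third estimate the full conic orthogonality $\|x^{k+1}\|^2 = \langle x^{k+1}, x^k + \alpha_k \nabla d(x^k)\rangle$ (obtained by also testing $w = 2x^{k+1}$) is more than is needed; the one-sided inequality from testing $w = 0 \in \ca^*$ already yields $\|x^{k+1}\|^2 \leq \|x^k\|^2 - \|x^{k+1}-x^k\|^2 + 2\alpha_k \langle \nabla d(x^k), x^{k+1}\rangle$, which is exactly what the paper's choice $x=0$ in \eqref{dg_prr} exploits — so the ``decisive'' ingredient is merely $0 \in \ca^*$, not the cone structure of $\ca^*$.
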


\begin{proof} Based on the update rule for the gradient method we get:
\begin{align*}
& \| x^{k+1} - x \|^2 =  \| x^{k+1} -  x^k + x^k -  x \|^2 \\
& =  \| x^k - x \|^2 + 2 \langle x^{k+1} -
x^k, x^k - x \rangle + \| x^{k+1} - x^k \|^2\\
& = \| x^k - x \|^2 + 2 \langle x^{k+1} - x^k, x^{k+1} - x
\rangle - \| x^{k+1} - x^k \|^2 \\
& \leq  \| x^k \!-\! x \|^2 \!-\! 2 \alpha_k \langle \nabla
d(x^{k}), x \!-\! x^k\rangle \!+\! 2 \alpha_k \Big(  \langle \nabla
d(x^{k}), x^{k+1} \!\!-\! x^k \rangle \!-\! \frac{L_{\text{d}}}{2}
\| x^{k+1} \!-\! x^k \|^2 \Big)\\
& \overset{\eqref{eq_descent}}{\leq} \| x^k \!-\! x \|^2 \!+\!  2
\alpha_k \left(  d(x^{k+1}) - d(x^k) - \langle \nabla d(x^{k}), x
\!-\! x^k\rangle \right ) \qquad  \forall k \geq 0, \; x \in
{\ca^*},
\end{align*}
where the first inequality follows from  the definition of
$x^{k+1}$, i.e. from the property of the projection operator
$\langle x^{k+1} - x^k - \alpha_k  \nabla d(x^k), x - x^{k+1}\rangle
\geq 0$ for any $x \in {\ca^*}$, and $\alpha_k  L_{\text{d}} \leq
1$. In conclusion, for all $k \geq 0$ and  $x \in {\ca^*}$ we
obtain:
\begin{align}
\label{dg_prr} \| x^{k+1} - x \|^2 \leq & \| x^k - x \|^2   + 2
\alpha_k \left( d(x^{k+1}) - d(x^k) - \langle \nabla d(x^k), x - x^k
\rangle \right).
\end{align}
Now, if we take $x = x^* \in X^*$ in \eqref{dg_prr} and use
concavity of $d$, we get that: $$\| x^{k+1} - x^* \|^2 \leq \| x^k -
x^* \|^2 + 2 \alpha_k \left( d(x^{k+1}) - d(x^*) \right) \leq \| x^k
- x^* \|^2.
$$ Thus, we obtain:
\begin{align}
\label{dg_pr1} \| x^k - x^* \| \leq \| x^0 - x^* \| \qquad \forall k
\geq 0, \quad x^* \in X^*.
\end{align}
Moreover, if we take $x = x^k$ in \eqref{dg_prr} and use $\alpha_k
\leq 1/L_\text{d}$, then we get that the dual gradient algorithm is
an ascent method:
\begin{align}
\label{dg_pr2} d(x^{k+1}) \geq d(x^k) + \frac{L_\text{d}}{2} \|
x^{k+1} - x^k \|^2 \qquad \forall k \geq 0.
\end{align}
Finally, if we take $x = 0$ in \eqref{dg_prr}, using that
$d(x^{k+1}) \leq f^*$ and that $f(u^k) = d(x^k) - \langle \nabla
d(x^k),  x^k \rangle $, then we get:
\begin{align}
\label{dg_pr3} \|x^{k+1}\|^2 \leq \|x^k\|^2 + 2 \alpha_k (f^* -
f(u^k)) \quad \forall k \geq 0.
\end{align}
Relations \eqref{dg_pr1}, \eqref{dg_pr2} and \eqref{dg_pr3} prove
the statements of the lemma.
\end{proof}

\noindent Furthermore, for any $x \in {\ca^*}$ we can define the
following finite quantity:
\begin{equation}
\label{rd} \mathcal{R} (x) =  \min\limits_{x^* \in X^*} \|x^* - x\|.
\end{equation}
From  Assumption \ref{as_strong} it follows that  there exists a
finite optimal Lagrange multiplier $x^*$ and thus $\mathcal{R} (x) <
\infty$, i.e. it  is finite, for any finite $x \in {\ca^*}$. The
well-known sublinear convergence rate  of Algorithm {\bf (DG)} in
terms of dual suboptimality is given in the next lemma (see Theorem
4 in \cite{Nes:12_comp}):

\begin{lemma} \cite{Nes:12_comp}
\label{th_sublin} Let Assumption \ref{as_strong} hold and the
sequence $\left(x^k\right)_{k\geq 0}$ be generated by Algorithm {\bf
(DG)}. Then,  defining $\mathcal{R}_\text{d}  = \mathcal{R}(x^0)$, a sublinear
estimate on dual suboptimality for  dual problem
\eqref{eq_dual_prob} is given by:
\begin{equation}
\label{bound_dual_optim_adg} f^* - d({x}^{k}) \leq \frac{4 L_G
\mathcal{R}_\text{d}^2}{k}.
\end{equation}
\end{lemma}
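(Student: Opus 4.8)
The plan is to track the dual suboptimality gap $\delta_k = f^* - d(x^k)$, which is nonnegative since $f^* = \max_{x \in \ca^*} d(x)$ by \eqref{eq_dual_prob} and each iterate lies in $\ca^*$, and to show it obeys a telescoping one-step recursion. First I would record two facts that are already available: the key per-iteration inequality \eqref{dg_prr}, valid for every $x \in \ca^*$, and the ascent property \eqref{dg_pr2}, which gives $d(x^{k+1}) \geq d(x^k)$ and hence that the sequence $(\delta_k)_{k \geq 0}$ is nonincreasing. I would also fix $x^* \in X^*$ to be a closest optimal multiplier to $x^0$, so that $\|x^0 - x^*\| = \mathcal{R}(x^0) = \mathcal{R}_\text{d}$.

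The heart of the argument is to specialize \eqref{dg_prr} at $x = x^*$ and convert the gradient inner product into the gap via concavity. Concavity of $d$ yields $\langle \nabla d(x^k), x^* - x^k \rangle \geq d(x^*) - d(x^k) = \delta_k$, while the identity $d(x^{k+1}) - d(x^k) = \delta_k - \delta_{k+1}$ holds by definition of the gap. Substituting both into \eqref{dg_prr} collapses the bracketed term to $-\delta_{k+1}$, giving the clean recursion
\[ 2\alpha_k\, \delta_{k+1} \leq \|x^k - x^*\|^2 - \|x^{k+1} - x^*\|^2 \qquad \forall k \geq 0. \]
Here the two $\delta_k$ contributions cancel exactly, which is the step I expect to require the most careful bookkeeping; everything else is routine substitution.

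To finish, I would sum the recursion over $j = 0, \dots, k-1$. The right-hand side telescopes to $\|x^0 - x^*\|^2 - \|x^k - x^*\|^2 \leq \mathcal{R}_\text{d}^2$, and using the step-size lower bound $\alpha_j \geq 1/L_G$ gives $\tfrac{2}{L_G}\sum_{j=1}^{k} \delta_j \leq \mathcal{R}_\text{d}^2$. Since $(\delta_j)$ is nonincreasing, each of the $k$ summands dominates $\delta_k$, so $\sum_{j=1}^k \delta_j \geq k\,\delta_k$, and therefore $\delta_k \leq \tfrac{L_G \mathcal{R}_\text{d}^2}{2k}$. This already implies the claimed bound $f^* - d(x^k) \leq \tfrac{4 L_G \mathcal{R}_\text{d}^2}{k}$ (in fact with a much smaller constant), the looser constant in the statement simply matching the reference \cite{Nes:12_comp}. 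The only genuine subtlety beyond the cancellation above is accommodating the variable step size, which is handled uniformly by $1/L_G \leq \alpha_k \leq 1/L_\text{d}$: the upper bound $\alpha_k L_\text{d} \leq 1$ is what made \eqref{dg_prr} and the ascent property valid, and the lower bound $\alpha_k \geq 1/L_G$ is what converts the summed gaps into the $\mathcal{O}(1/k)$ rate.
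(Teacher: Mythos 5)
Your proof is correct, but it does not follow the paper's route, because the paper in fact gives no argument at all for this lemma: it defers to Theorem 4 of \cite{Nes:12_comp} (stated there for constant step size) with only the remark that the variable step-size case is similar. Your argument, by contrast, is self-contained and uses nothing beyond what the paper has already established in Lemma \ref{lemma1_dg}: specializing \eqref{dg_prr} at $x = x^*$ and absorbing the linearization term via concavity of $d$ collapses the bracket to $-(f^*-d(x^{k+1}))$, giving the telescoping recursion $2\alpha_k\,(f^*-d(x^{k+1})) \leq \|x^k - x^*\|^2 - \|x^{k+1} - x^*\|^2$; summing and invoking monotonicity of the gap (the ascent property \eqref{dg_pr2}) then yields the rate. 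This buys three things the citation does not: (i) the variable step size $1/L_G \leq \alpha_k \leq 1/L_\text{d}$ is handled explicitly rather than by assertion, the upper bound being what validates \eqref{dg_prr} and \eqref{dg_pr2} and the lower bound being what produces the $L_G$ in the estimate; (ii) the paper becomes self-contained at this point; (iii) you actually obtain $f^* - d(x^k) \leq L_G \mathcal{R}_\text{d}^2/(2k)$, a factor of $8$ sharper than the stated bound, which is therefore implied a fortiori. Two small points should be made explicit in a final write-up: the multiplier must be fixed as $x^* = [x^0]_{X^*}$ so that $\|x^0 - x^*\| = \mathcal{R}_\text{d}$, and nonnegativity of the gap (used when discarding $\|x^k - x^*\|^2$, when replacing $\alpha_j$ by $1/L_G$ under the sum, and when bounding the $k$ summands below by the last one) follows from $x^k \in \ca^*$ together with \eqref{eq_dual_prob}; both are immediate but are doing real work in the bookkeeping.
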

\begin{proof}
Although the convergence rate is  given for constant step size in
\cite{Nes:12_comp}, it is easy  to show that  for variable step size
the convergence rate is similar. Therefore, we omit the proof and we
refer e.g. to  Theorem 4 in \cite{Nes:12_comp} for details.
\end{proof}

\noindent In the sequel  we use $x^* = [x^0]_{X^*}$ and thus
$\mathcal{R}_\text{d} = \| x^0 - x^*\|$. Our iteration complexity
analysis for Algorithm {\bf (DG)} is based on two types of
approximate primal solutions:  the last primal iterate sequence
$(u^k)_{k \geq 0} $ or an average primal sequence $(\hat u^k)_{k
\geq 0}$ of the form:
\begin{align}
\label{av_dg} \hat u^k =\frac{ \sum_{j=0}^k \alpha_j u^j}{S_k},
\qquad \text{with} \quad S_k=\sum_{j=0}^k \alpha_j.
\end{align}


\subsection{Sublinear convergence in the last primal iterate}
\label{sublinear_first} \noindent In this section we derive
sublinear estimates for  primal feasibility  and primal
suboptimality for the last primal iterate sequence $(u^k)_{k \geq
0}$ generated by  Algorithm \textbf{(DG)}. Let us notice that from
the definition of Algorithm \textbf{(DG)} we have $u^k = u(x^k)$.

\begin{theorem}
\label{th_dglast} Let Assumption \ref{as_strong} hold and the
sequences $\left(x^k,u^k\right)_{k\geq 0}$ be generated by Algorithm
{\bf (DG)}. Then, for a given accuracy $\epsilon>0$ we get an
$\epsilon$-primal solution for \eqref{eq_prob_princc} in the last
primal  iterate $u^k$ of Algorithm \textbf{(DG)} after $k =
{\mathcal O} (\frac{1}{\epsilon^2})$ iterations.
\end{theorem}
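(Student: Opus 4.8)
The plan is to reduce the whole statement to a single decay estimate on $\|u^k - u^*\|$ and then feed it into the preliminary lemmas of Section~2. First I would invoke Lemma~\ref{th_sublin} to obtain the dual suboptimality rate $f^* - d(x^k) \leq \frac{4 L_G \mathcal{R}_\text{d}^2}{k}$. Combining this with the strong-convexity inequality \eqref{ineq_x}, namely $\frac{\sigma_\text{f}}{2}\|u^k - u^*\|^2 \leq f^* - d(x^k)$ (recall $u^k = u(x^k)$ for Algorithm~\textbf{(DG)}), immediately yields
\[
\|u^k - u^*\| \leq \sqrt{\frac{8 L_G \mathcal{R}_\text{d}^2}{\sigma_\text{f}\, k}} = \mathcal{O}\!\left(\frac{1}{\sqrt{k}}\right).
\]
This single $\mathcal{O}(1/\sqrt{k})$ bound is the engine driving both parts of the $\epsilon$-primal definition \eqref{epsilon_primal}.

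For the feasibility part, I would substitute this bound directly into \eqref{ineq_feas2}, giving $\text{dist}_\ca(G u^k + g) \leq \|G\|\,\|u^k - u^*\| = \mathcal{O}(1/\sqrt{k})$; forcing the right-hand side below $\epsilon$ requires $k = \mathcal{O}(1/\epsilon^2)$. For the suboptimality part, I would invoke the primal suboptimality estimate \eqref{ineq_opt}, $|f(u^k) - f^*| \leq \|G\|\left(\|x^k - x^*\| + \|x^*\|\right)\|u^k - u^*\|$. The step that makes this work is controlling the multiplicative prefactor: by the first inequality of Lemma~\ref{lemma1_dg} the dual iterates remain bounded, $\|x^k - x^*\| \leq \|x^0 - x^*\| = \mathcal{R}_\text{d}$, so $\left(\|x^k - x^*\| + \|x^*\|\right) \leq \mathcal{R}_\text{d} + \|x^*\|$ is a constant independent of $k$. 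Hence $|f(u^k) - f^*| \leq \|G\|(\mathcal{R}_\text{d} + \|x^*\|)\,\|u^k - u^*\| = \mathcal{O}(1/\sqrt{k})$, and again $k = \mathcal{O}(1/\epsilon^2)$ suffices. Taking the larger of the two iteration counts proves the claim.

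The main obstacle is not any individual estimate but recognizing that the uniform boundedness of the dual iterates supplied by Lemma~\ref{lemma1_dg} is exactly what prevents the $x^k$-dependent prefactor in \eqref{ineq_opt} from growing with $k$; without it the suboptimality bound would fail to inherit the clean $\mathcal{O}(1/\sqrt{k})$ rate. Everything else is a direct chaining of the preliminary lemmas with the known dual convergence rate, so no genuinely new computation is needed beyond assembling these pieces and reading off the resulting iteration count $k = \mathcal{O}(1/\epsilon^2)$.
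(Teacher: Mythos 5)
Your proposal is correct and follows essentially the same route as the paper's own proof: combine the dual rate \eqref{bound_dual_optim_adg} with \eqref{ineq_x} to get $\|u^k-u^*\|=\mathcal{O}(1/\sqrt{k})$, then chain this through \eqref{ineq_feas2} for feasibility and through \eqref{ineq_opt} together with the boundedness $\|x^k-x^*\|\leq\|x^0-x^*\|$ from Lemma \ref{lemma1_dg} for suboptimality. The only cosmetic difference is that the paper additionally bounds $\|x^*\|\leq\mathcal{R}_\text{d}+\|x^0\|$ so that the final constant is expressed in terms of $x^0$ and $\mathcal{R}_\text{d}$ alone, which does not change the argument or the $k=\mathcal{O}(1/\epsilon^2)$ conclusion.
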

\begin{proof}  Firstly, combining \eqref{ineq_x} and
\eqref{bound_dual_optim_adg} we obtain the following important
relation characterizing the distance from the last iterate $u^k$ to
the unique optimal solution $u^*$ of our original problem
\eqref{eq_prob_princc}:
\begin{align}
\label{dist_sublin_dg} \| u^k - u^* \| \leq  \sqrt{\frac{8 L_G
\mathcal{R}_\text{d}^2}{k \sigma_\text{f}}}.
\end{align}
Secondly, combining the previous relation \eqref{dist_sublin_dg} and
\eqref{ineq_feas2} we obtain a sublinear estimate for feasibility
violation of the last iterate $u^k$ for Algorithm \textbf{(DG)}:
\begin{align}
\label{fes_sublin}  \text{dist}_{\mathcal{K}}(Gu^k + g)  & \leq
\|G\| \| \bo{u}^k - u^* \| \ \leq  3 \|G\| \sqrt{\frac{ L_G
 \mathcal{R}_\text{d}^2}{k \sigma_\text{f}}}  = 3\sqrt{\frac{\|G\|^2}{\sigma_\text{f}} \frac{L_G
\mathcal{R}_\text{d}^2}{k}} \leq \frac{3 L_G
\mathcal{R}_\text{d}}{\sqrt{k}},
\end{align}
where we used that $L_\text{d} = \|G\|^2/\sigma_\text{f}$ and
$L_\text{d} \leq L_G$. Finally, we derive a sublinear estimate for
primal suboptimality of the last iterate $u^k$. Combining
\eqref{dg_pr1}, \eqref{ineq_opt} and \eqref{dist_sublin_dg}  we
obtain:
\begin{align}
\label{opt_sublin} |f(u^k) - f^*| & \leq   3 \|G\| (\|x^k - x^*\| +
\|x^*\|)  \sqrt{\frac{ L_G \mathcal{R}_\text{d}^2}{k \sigma_\text{f}}}
\leq  3 \|G\|(2 \|x^0 - x^*\| + \|x^0\|) \sqrt{\frac{ L_G \mathcal{R}_\text{d}^2}{k \sigma_\text{f}}} \nonumber \\
& \leq  (6  \mathcal{R}_\text{d} + 3 \|x^0\|) \frac{L_G
\mathcal{R}_\text{d}}{\sqrt{k}},
\end{align}
where in the second inequality we used   the definition of the
finite constants $\mathcal{R}_\text{d} = \| x^0 - x^*\|$ and
$L_\text{d} = \|G\|^2/\sigma_\text{f}$. In conclusion, we have
obtained sublinear estimates  of order
$\mathcal{O}(\frac{1}{\sqrt{k}})$ for primal infeasibility
(inequality \eqref{fes_sublin}) and primal suboptimality (inequality
\eqref{opt_sublin}) for the last primal iterate sequence $(u^k)_{k
\geq 0}$ generated by  Algorithm~\textbf{(DG)}. Now, it is
straightforward to see that if we want to get an $\epsilon$-primal
solution in $u^k$ we need to perform $k = {\mathcal O}
(\frac{1}{\epsilon^2})$ iterations.
\end{proof}


\subsection{Sublinear convergence in an average  primal sequence}
\label{sublinear_av} \noindent In this section we derive sublinear
convergence  estimates for  primal infeasibility  and primal
suboptimality for the average primal sequence $({\hat u}^k)_{k \geq
0}$ defined  in \eqref{av_dg}.

\begin{theorem}
\label{th_dgav} Let Assumption \ref{as_strong} hold and the
sequences $\left(x^k,u^k\right)_{k\geq 0}$ be generated by Algorithm
{\bf (DG)}. Then, for a given accuracy $\epsilon>0$ we get an
$\epsilon$-primal solution for \eqref{eq_prob_princc} in the average
primal sequence  $\hat u^k$ of Algorithm \textbf{(DG)} after $k =
{\mathcal O} (\frac{1}{\epsilon})$ iterations.
\end{theorem}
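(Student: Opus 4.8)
The plan is to bound the three quantities appearing in the definition of an $\epsilon$-primal solution — the feasibility violation $\text{dist}_\ca(G\hat u^k + g)$ and the two-sided suboptimality gap $f(\hat u^k) - f^*$ — and to show that each is of order $\mathcal{O}(1/S_k)$. Since $\alpha_j \geq 1/L_G$ gives $S_k = \sum_{j=0}^k \alpha_j \geq (k+1)/L_G$, this converts immediately into an $\mathcal{O}(1/k)$ rate and hence $k = \mathcal{O}(1/\epsilon)$ iterations. Two easy preliminary observations: $\hat u^k \in U$ because it is a convex combination of the iterates $u^j \in U$ and $U$ is convex; and by linearity $G\hat u^k + g = \frac{1}{S_k}\sum_{j=0}^k \alpha_j (Gu^j + g)$. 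Note that the single-point Lemmas \eqref{ineq_x}, \eqref{ineq_feas2}, \eqref{ineq_opt} cannot be applied to $\hat u^k$ directly, so a genuine averaging argument is needed.

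The crux is a telescoping decomposition of $G\hat u^k + g$. From step 2 of Algorithm~\textbf{(DG)} and the projection property \eqref{prop_proj} applied to $\ca^*$, the vector $c^k := \frac{1}{\alpha_k}(x^{k+1} - x^k) - \nabla d(x^k)$ lies in $\ca$. Since $\nabla d(x^k) = g(u^k) = -(Gu^k + g)$, this rewrites as $Gu^k + g = -\frac{1}{\alpha_k}(x^{k+1} - x^k) + c^k$. Multiplying by $\alpha_k$, summing over $j = 0,\dots,k$, and telescoping the first term gives
\[ G\hat u^k + g = \frac{x^0 - x^{k+1}}{S_k} + \frac{1}{S_k}\sum_{j=0}^k \alpha_j c^j, \]
where the second term lies in $\ca$ because $\ca$ is a convex cone and each $\alpha_j c^j \in \ca$.

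From here the three bounds follow. For feasibility, the distance to $\ca$ is at most the distance to the point $\frac{1}{S_k}\sum_{j=0}^k \alpha_j c^j \in \ca$, which equals $\|x^0 - x^{k+1}\|/S_k$; bounding $\|x^0 - x^{k+1}\| \leq \|x^0 - x^*\| + \|x^{k+1} - x^*\| \leq 2\mathcal{R}_\text{d}$ via the first inequality of Lemma~\ref{lemma1_dg} yields $\text{dist}_\ca(G\hat u^k + g) \leq 2\mathcal{R}_\text{d}/S_k$. For the upper suboptimality bound I use Jensen's inequality $f(\hat u^k) \leq \frac{1}{S_k}\sum_{j=0}^k \alpha_j f(u^j)$ together with the third inequality of Lemma~\ref{lemma1_dg} in the form $2\alpha_j(f(u^j) - f^*) \leq \|x^j\|^2 - \|x^{j+1}\|^2$; summing telescopes to $\sum_{j=0}^k \alpha_j(f(u^j) - f^*) \leq \|x^0\|^2/2$, hence $f(\hat u^k) - f^* \leq \|x^0\|^2/(2S_k)$. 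For the lower bound I invoke the saddle-point inequality $f^* = d(x^*) \leq f(\hat u^k) + \langle x^*, g(\hat u^k)\rangle$, valid since $\hat u^k \in U$, which rearranges to $f(\hat u^k) - f^* \geq \langle x^*, G\hat u^k + g\rangle$; substituting the decomposition and using $x^* \in \ca^*$ together with $\sum_{j=0}^k \alpha_j c^j \in \ca$ to discard the (nonnegative) cone term gives $f(\hat u^k) - f^* \geq \langle x^*, x^0 - x^{k+1}\rangle/S_k \geq -2\|x^*\|\mathcal{R}_\text{d}/S_k$.

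The main obstacle is the lower suboptimality bound: unlike the feasibility and upper estimates, it cannot be obtained from the descent/ascent inequalities alone and requires tying the duality (saddle-point) inequality to the cone decomposition above, exploiting that the accumulated projection residuals $\sum_{j=0}^k \alpha_j c^j$ pair nonnegatively with any dual-feasible $x^* \in \ca^*$. Once the decomposition is established, all three estimates are $\mathcal{O}(1/S_k) = \mathcal{O}(1/k)$, so an $\epsilon$-primal solution in the average sequence $\hat u^k$ is obtained after $k = \mathcal{O}(1/\epsilon)$ iterations.
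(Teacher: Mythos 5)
Your proposal is correct and follows essentially the same route as the paper's proof: your vectors $c^j$ are exactly the paper's projection residuals $z^j \in \ca$, the telescoping decomposition of $G\hat u^k + g$, the feasibility bound via \eqref{dg_pr1}, and the upper suboptimality bound via Jensen plus the telescoped third inequality of Lemma \ref{lemma1_dg} are all identical. The only cosmetic difference is in the lower suboptimality bound, where you pair $x^*$ directly with the decomposition and discard the nonnegative cone term, whereas the paper bounds $\langle x^*, -(G\hat u^k+g)\rangle$ by $\|x^*\|\,\text{dist}_\ca(G\hat u^k + g)$ and reuses its feasibility estimate — both arguments rest on the same fact that $x^* \in \ca^*$ pairs nonnegatively with elements of $\ca$, and they yield bounds of the same order.
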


\begin{proof} Our proof follows similar lines as in
\cite[Proposition 1]{NedOzd:09} given for the dual subgradient
method. However, in our case, by taking into account that the dual
is smooth and the nice properties of gradient method (see Lemma
\ref{lemma1_dg}) and of the projection on cones \eqref{prop_proj},
we get better convergence estimates than in \cite{NedOzd:09}. First,
given the definition of $x^{j+1}$ in Algorithm \textbf{(DG)} we get:
\[  \left[x^j + \alpha_j \nabla d(x^j)\right]_{\mathcal{K}^*} = x^{j+1} \quad \forall j \geq 0.\]
Subtracting $x^j$ from both sides, adding up the above inequality
for $j\!=\!0$ to $j\!=\!k$, we get:
\begin{align*}
\left\|\sum_{j=0}^k \left[x^j + \alpha_j \nabla
d(x^j)\right]_{\mathcal{K}^*} - x^j\right\| = \norm{x^{k+1} - x^0}.
\end{align*}
Denoting $z^j \!=\! \frac{1}{\alpha_j} \left( \left[ x^j \!+\!
\alpha_j \nabla d(x^j) \right]_{\mathcal{K}^*} \!\!-  (x^j \!+\!
\alpha_j \nabla d(x^j)) \right)
\!\!\overset{\eqref{prop_proj}}{\in}\!\! \ca$ and dividing by $S_k$
we get:
\begin{align*}
\left\| \left(\frac{1}{S_k}\sum_{j=0}^k \alpha_j z^j \right) +
\frac{1}{S_k}\sum_{j=0}^k \alpha_j \nabla d(x^j)  \right\| =
\frac{1}{S_k}\norm{x^{k+1} - x^0}.
\end{align*}
Since $z^j \in \mathcal{K}$,  then also $
\frac{1}{S_k}\sum\limits_{j=0}^k \alpha_j z^j \in \mathcal{K}$.
Moreover, from the definition of $\hat u^k$ and the relation $\nabla
d (x^j) = - Gu^j - g$, we obtain $\frac{1}{S_k}\sum_{j=0}^k \alpha_j
\nabla d(x^j) = - G \hat u^k - g$. Using the definition of the
distance and the previous facts  we obtain:
\begin{equation}\label{subl_1}
 d_{\mathcal{K}}(G\hat{u}^k+g) \le
\left\| \left(\frac{1}{S_k}\sum_{j=0}^k \alpha_j z^j \right) - (G
\hat u^k + g) \right\| = \frac{\norm{x^{k+1} - x^0}}{S_k}.
\end{equation}

\noindent It remains to bound $\norm{x^{k+1} -x^0}$. Using the
inequality \eqref{dg_pr1} we get:
\begin{align*}
\norm{x^{k+1} - x^0}  \leq (\|x^{k+1} - x^*\| + \|x^* -x^0\|)
\overset{\eqref{dg_pr1}}{\leq} 2 \| x^* - x^0\|  = 2
\mathcal{R}_\text{d}.
\end{align*}
Using this bound in  \eqref{subl_1} and the fact that  $S_k =
\sum_{j=0}^k \alpha_j \geq \frac{k+1}{L_G}$, we get the following
estimate on feasibility violation:
\begin{align}
\label{subl_2} d_{\mathcal{K}}(G\hat{u}^k+g)  \leq  \frac{2
\mathcal{R}_\text{d}}{S_k} \leq  \frac{2 L_G
\mathcal{R}_\text{d}}{k+1}.
\end{align}

\noindent In order to derive estimates for primal suboptimality we
use the definition of dual cone $\ca^*$ and that $x^* \in \ca^*$,
which imply:
\begin{align*}
f^* &=  \min_{u \in U} f(u) + \langle x^*, g(u) \rangle \leq f(\hat
u^k) + \langle x^*, g(\hat u^k) \rangle \\
& =  f(\hat u^k) + \langle x^*, -G \hat u^k - g \rangle \leq f(\hat
u^k) + \langle x^*,  [G \hat u^k + g]_{\ca} - (G \hat u^k + g)
\rangle \\
& \leq f(\hat u^k) + \norm{x^*} \text{dist}_{\mathcal{K}}(G\hat{u}^k
+ g)  \overset{\eqref{subl_2}}{\leq} f(\hat u^k) +  \frac{2 L_G
\mathcal{R}_\text{d}}{k+1}  \|x^*\|.
\end{align*}
Using the definition of $\mathcal{R}_\text{d}$ and the previous
inequality  we get:
\begin{align}
\label{subl_3}  f(\hat u^k) - f^* \geq -  \frac{2 L_G
\mathcal{R}_\text{d}}{k+1}(\mathcal{R}_\text{d} + \|x^0\|).
\end{align}
On the other hand, from \eqref{dg_pr3} we have:
\begin{align*}
\|x^{j+1}\|^2 \leq \|x^j\|^2 + 2 \alpha_j (f^* - f(u^j)) \quad
\forall j \geq 0.
\end{align*}
Adding up these inequalities for $j=0$ to $j=k$ we obtain:
\[ \|x^{k+1}\|^2 + 2S_k \sum_{j=0}^k \frac{\alpha_j}{S_k}(f(u^j) - f^*) \leq  \|x^0\|^2.   \]
Using the definition of $\hat u^k$ and the convexity of $f$ we get:
\begin{align}
\label{dg_pr44} f(\hat u^k) - f^*
  \leq   \frac{\|x^0\|^2}{2S_k}.
\end{align}
Combining \eqref{subl_3} and  \eqref{dg_pr44} we  obtain the
following  bounds on primal suboptimality:
\begin{align}
\label{dg_pr4} -  \frac{2 L_G \mathcal{R}_\text{d}}{k+1}
(\mathcal{R}_\text{d} + \|x^0\|) \leq f(\hat u^k) - f^* \leq
\frac{L_G \|x^0\|^2}{2(k+1)}.
\end{align}
In conclusion, we have obtained sublinear estimates  of order
$\mathcal{O}(\frac{1}{k})$ for primal infeasibility (inequality
\eqref{subl_2}) and primal suboptimality (inequality \eqref{dg_pr4})
for the average primal  sequence $(\hat u^k)_{k \geq 0}$ generated
by Algorithm~\textbf{(DG)}. Now, if we want to get an
$\epsilon$-primal solution in $\hat u^k$ we need to perform $k =
{\mathcal O} (\frac{1}{\epsilon})$ iterations.
\end{proof}


\noindent We can also characterize the distance from $\hat u^k$ to
the unique primal solution $u^*$ of problem \eqref{eq_prob_princc}.
Indeed, taking $x = x^*$ and $u = \hat u^k$ in \eqref{eq_scL} and
using that $u(x^*) = u^*$, we have:
\begin{align*}
 \frac{\sigma_{\mathrm{f}}}{2} \| \hat u^k - \bo{u}^*\|^2
& \leq \lu(\hat u^k,x^*) - \lu(u^*,x^*) =  \lu(\hat u^k,x^*) - f^*\\
& =  f(\hat u^k) + \langle x^*, g(\hat u^k) \rangle  - f^* \\
& \leq f(\hat u^k) - f^* + \norm{x^*} \text{dist}_{\mathcal{K}}
(G\hat u^k + g).
\end{align*}
Therefore, we obtain:
\begin{align}
\label{distance} \| \hat u^k - \bo{u}^*\|^2 \leq
\frac{2}{\sigma_{\mathrm{f}}} \left[  f(\hat u^k) - f^*  +
\norm{x^*} \text{dist}_{\mathcal{K}} (G\hat u^k + g) \right].
\end{align}
Using now  \eqref{subl_2}  and \eqref{dg_pr4}, we get:
\[   \| \hat u^k - \bo{u}^*\|^2  \leq
\frac{1}{k+1} \left[   \frac{L_G \|x^0\|^2}{\sigma_{\mathrm{f}}}  +
 \frac{4 L_G \mathcal{R}_\text{d}}{\sigma_{\mathrm{f}}}
(\mathcal{R}_\text{d} + \|x^0\|) \right]. \]

\noindent Note that if we assume constant step size $\alpha_k
=1/L_\text{d}$, then $L_G= L_\text{d}$.  Thus, this choice for the
step size provides us the  best convergence estimates. Further, the
iteration complexity estimates of order
$\mathcal{O}(\frac{1}{\sqrt{k}})$ in the last primal  iterate
sequence $u^k$ (see Section \ref{sublinear_first}) are inferior to
those estimates of order $\mathcal{O}(\frac{1}{k})$ corresponding to
an average of primal iterates $\hat u^k$ (see this section).
However, in Section \ref{extensions} we show that for the particular
case of linearly constrained convex problems the convergence
estimates for both sequences, the last iterate and an average of
iterates, have the same order.


\section{Rate of convergence of dual  fast gradient algorithm}
\label{sec_dfg} \noindent In this section we consider a variant of
Algorithm {\bf (DFO)}, where the step size  $\alpha_k$ is chosen
constant, i.e. $\alpha_k = \frac{1}{L_\text{d}}$ for all $k \geq 0$
and $\theta_k$ is updated iteratively as shown below. In this case
we obtain the following dual fast gradient algorithm, which is an
extension of Nesterov's optimal gradient method \cite{Nes:04} (see
 \cite{BecTeb:14,Tse:08,Tse:10}):
\begin{center}
\framebox{
\parbox{7cm}{
\begin{center}
\textbf{ Algorithm {\bf (DFG)} }
\end{center}
{Given $x^0 = y^1 \in {\ca^*}$, for $k\geq 1$ compute:}
\begin{enumerate}
\item $u^k = \arg \min\limits_{u \in U} \mathcal{L}(u,y^k)$
\item ${x}^{k}=\left[y^k+ \frac{1}{L_\text{d}} \nabla d(y^k)\right]_{\mathcal{K}^*}$,
\item $y^{k+1} = x^k + \frac{\theta_k -1 }{\theta_{k+1}} (x^k - x^{k-1})$,
\item[] $\text{where} \quad  \theta_{k+1} = \frac{1 + \sqrt{1 + 4
\theta_k^2 }}{2}$ and $\theta_1=1$.
\end{enumerate}
}}
\end{center}
where we  recall that $u^k = u(y^k)$   and  ${\nabla} d(y^k)=
g(\bo{u}^k)$. Since $f$ is strongly convex, the Lagrangian $\lu(u,y)
= f(u) - \langle Gu + g, y \rangle$ is also strongly convex for any
fixed $y \in \rset^p$.  Therefore,  the minimizer of \eqref{eq_dual_func} for any fixed $y \in \rset^p$ is unique and from Danskin's theorem \cite{RocWet:98} we get that the
dual function $d$ is differentiable everywhere  and thus $\nabla d(y^k)$ is well defined even for $y^k \not \in \ca^*$. It can be easily seen that $\theta_0= 0$ and the
step size sequence $\theta_k$ satisfies: $\theta_k + \frac{1}{2} \le
\theta_{k+1} \le \theta_k + 1$. Therefore, we obtain the following
bound:
\begin{equation}\label{theta_1}
\frac{k+1}{2} \le \theta_k \le k.
\end{equation}
Rearranging the terms in the step size update $\theta_{k+1}$, we
have the relation:  $\theta_{k+1}^2 - \theta_k^2 = \theta_{k+1}$.
Summing on the history and defining  $S_k^\theta =
\sum\limits_{j=0}^k \theta_j$, we also obtain:
\begin{equation}\label{theta_2}
S_k^\theta = \theta_k^2.
\end{equation}

\noindent Denoting $w^k = x^{k-1}+ \theta_k(x^k - x^{k-1})$  and
$\Delta(x,y) = d(y) + \langle \nabla d(y), x- y\rangle -d(x)$, we
now state the following auxiliary result (for a similar result
corresponding to another formulation of Algorithm {\bf (DFG)} see
\cite{Tse:08}).

\begin{theorem}
\label{corr2}  Let Assumption \ref{as_strong} hold and the sequences
$(x^k,y^k)_{k \ge 0}$ be generated by Algorithm {\bf (DFG)}, then
for any Lagrange multiplier $x \in \mathcal{K}^*$ and $k \geq 0$ we
have the following relation:
\begin{equation}\label{corr2_relation}
\theta_{k+1}^2 (d(x) - d(x^{k+1})) +
\sum\limits_{i=1}^{k+1}\theta_{i}\Delta(x,y^i) +
\frac{L_\text{d}}{2}\norm{w^{k+1}-x}^2 \le
\frac{L_\text{d}}{2}\norm{x^0-x}^2.
\end{equation}
\end{theorem}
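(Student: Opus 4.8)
The plan is to prove \eqref{corr2_relation} by induction on $k$, viewing its left-hand side as a Lyapunov (potential) function
\[ P_{k} = \theta_{k}^2\bigl(d(x)-d(x^{k})\bigr) + \sum_{i=1}^{k}\theta_i\Delta(x,y^i) + \frac{L_{\text{d}}}{2}\norm{w^{k}-x}^2, \]
and establishing both that $P_1 \le \frac{L_{\text{d}}}{2}\norm{x^0-x}^2$ and that $P_{k+1}\le P_k$ for every $k\ge 1$; the claim is then exactly $P_{k+1}\le\frac{L_{\text{d}}}{2}\norm{x^0-x}^2$. The engine of the argument is a one-step progress inequality for the projected gradient step $x^{k+1}=\left[y^{k+1}+\frac{1}{L_{\text{d}}}\nabla d(y^{k+1})\right]_{\ca^*}$. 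First I would combine the descent lemma \eqref{eq_descent} applied to the pair $(x^{k+1},y^{k+1})$ with the variational characterization of the projection, namely $\langle\, x^{k+1}-y^{k+1}-\frac{1}{L_{\text{d}}}\nabla d(y^{k+1}),\, z-x^{k+1}\rangle\ge 0$ for all $z\in\ca^*$, together with concavity of $d$ (which also gives $\Delta(\cdot,\cdot)\ge 0$). After rearranging, this yields, for every $z\in\ca^*$,
\[ d(x^{k+1})-d(z)-\Delta(z,y^{k+1}) \ge \frac{L_{\text{d}}}{2}\norm{x^{k+1}-y^{k+1}}^2 + L_{\text{d}}\langle x^{k+1}-y^{k+1},\, y^{k+1}-z\rangle. \]

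Next I would instantiate this inequality at the two test points $z=x^k$ and $z=x$, multiply the first by $(\theta_{k+1}-1)\ge 0$ and the second by $1$, add them, and finally multiply through by $\theta_{k+1}>0$. The point of these weights is that they interact with the two algebraic identities governing the scheme: the recursion $\theta_{k+1}^2-\theta_{k+1}=\theta_k^2$ (equivalently $\theta_{k+1}^2-\theta_k^2=\theta_{k+1}$), and the momentum identity obtained from step 3 of Algorithm \textbf{(DFG)}, which I would rewrite as
\[ \theta_{k+1}\,y^{k+1} = (\theta_{k+1}-1)\,x^k + w^k \qquad\text{and}\qquad \theta_{k+1}\bigl(x^{k+1}-y^{k+1}\bigr) = w^{k+1}-w^k, \]
using $w^{k}=x^{k-1}+\theta_{k}(x^k-x^{k-1})$ and $w^{k+1}=x^k+\theta_{k+1}(x^{k+1}-x^k)$. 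Under these identities the terms on the right-hand side collapse: the combined right-hand side becomes $\frac{L_{\text{d}}}{2}\norm{w^{k+1}-w^k}^2+L_{\text{d}}\langle w^{k+1}-w^k,\,w^k-x\rangle$, which by the elementary identity $\frac12\norm{b-a}^2+\langle b-a,a-c\rangle=\frac12\norm{b-c}^2-\frac12\norm{a-c}^2$ equals $\frac{L_{\text{d}}}{2}\norm{w^{k+1}-x}^2-\frac{L_{\text{d}}}{2}\norm{w^k-x}^2$. Simultaneously, using $\theta_{k+1}(\theta_{k+1}-1)=\theta_k^2$, the left-hand side reorganizes exactly into $-\bigl(\theta_{k+1}^2(d(x)-d(x^{k+1}))-\theta_k^2(d(x)-d(x^k))\bigr)-\theta_k^2\Delta(x^k,y^{k+1})-\theta_{k+1}\Delta(x,y^{k+1})$.

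Putting the two sides together gives $\theta_{k+1}^2(d(x)-d(x^{k+1}))+\theta_{k+1}\Delta(x,y^{k+1})+\frac{L_{\text{d}}}{2}\norm{w^{k+1}-x}^2\le \theta_k^2(d(x)-d(x^k))+\frac{L_{\text{d}}}{2}\norm{w^k-x}^2-\theta_k^2\Delta(x^k,y^{k+1})$. Discarding the nonnegative term $\theta_k^2\Delta(x^k,y^{k+1})$ (again by concavity of $d$) and adding $\sum_{i=1}^{k}\theta_i\Delta(x,y^i)$ to both sides produces precisely $P_{k+1}\le P_k$ for $k\ge 1$. For the base case $k=0$ I would specialize the one-step inequality to $z=x$ with $y^1=x^0$, $\theta_1=1$ and $w^1=x^1$; completing the square there turns it directly into $P_1\le\frac{L_{\text{d}}}{2}\norm{x^0-x}^2$, which closes the induction and gives \eqref{corr2_relation} for all $k\ge 0$.

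The main obstacle is the bookkeeping in the inductive step: selecting the correct weights $(\theta_{k+1}-1)$ and $1$ and the outer factor $\theta_{k+1}$, and then recognizing the two momentum identities — especially $\theta_{k+1}(x^{k+1}-y^{k+1})=w^{k+1}-w^k$ — that let the cross terms telescope into the clean squared-distance difference in $w$. A secondary point requiring care is that $y^{k+1}$ need not lie in $\ca^*$, so I would invoke the descent lemma and the Lipschitz gradient property of $d$ along the whole segment $[y^{k+1},x^{k+1}]$ (the gradient is globally Lipschitz since $\operatorname{dom} d=\rset^p$), while the projection inequality is used only with feasible test points $z\in\ca^*$.
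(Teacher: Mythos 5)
Your proof is correct and follows essentially the same route as the paper's: your one-step inequality (descent lemma plus the projection's variational inequality) is algebraically identical to the paper's bound $d(x^{k+1}) \ge \mathrm{model}(z) + \frac{L_\text{d}}{2}\|z-x^{k+1}\|^2$, and since that right-hand side is affine in $z$, your weighted combination at the two test points $z=x^k$ and $z=x$ reproduces exactly the paper's single evaluation at the convex combination $\tilde y=\left(1-\frac{1}{\theta_{k+1}}\right)x^k+\frac{1}{\theta_{k+1}}x$, with the same use of $\theta_{k+1}(\theta_{k+1}-1)=\theta_k^2$ and the momentum identities for $w^k$. The only (cosmetic) differences are that the paper telescopes where you induct with the potential $P_k$, and it absorbs your base case into the convention $\|w^0-x\|=\|x^0-x\|$, whereas you treat $k=0$ separately and explicitly note that the descent lemma must hold for $y^{k+1}\notin\ca^*$.
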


\begin{proof} From the Lipschitz gradient relation and the strong convexity
property of the corresponding quadratic approximation of
\eqref{eq_descent}, we have:
\begin{align*}
d(x^{k+1}) &\ge d(y^{k+1}) + \langle \nabla d(y^{k+1}), x^{k+1}-y^{k+1} \rangle - \frac{L_\text{d}}{2}\norm{x^{k+1}-y^{k+1}}^2\\
& \ge d(y^{k+1}) + \langle \nabla d(y^{k+1}), y-y^{k+1}\rangle -
\frac{L_\text{d}}{2}\norm{y-y^{k+1}}^2
 + \frac{L_\text{d}}{2}\norm{y-x^{k+1}}^2  \quad \forall y \in \ca^*.
\end{align*}
 Taking now $x \in \ca^*$, then $\tilde{y} =
\left(1-\frac{1}{\theta_{k+1}}\right)x^k + \frac{1}{\theta_{k+1}}x
\in \ca^*$ and we have:
\begin{align*}
 d(x^{k+1}) & \ge d(y^{k+1}) + \langle \nabla d(y^{k+1}), \tilde{y}-y^{k+1}\rangle - \frac{L_\text{d}}{2}\norm{\tilde{y} - y^{k+1}}^2 + \frac{L_\text{d}}{2}\norm{\tilde{y} - x^{k+1}}^2\\
& = d(y^{k+1}) + \left(1-\frac{1}{\theta_{k+1}}\right)\langle \nabla d(y^{k+1}), x^k - y^{k+1}\rangle \\
& \qquad \!+\! \frac{1}{\theta_{k+1}}\langle \nabla d(y^{k+1}\!), x -
y^{k+1}\rangle \!-
\frac{L_\text{d}}{2\theta_{k+1}^2}\norm{w^k \!- x}^2 \!+ \frac{L_\text{d}}{2\theta_{k+1}^2}\norm{w^{k+1} \!- x}^2 \\
& \ge \left(1- \frac{1}{\theta_{k+1}}\right) d(x^k) +
\frac{1}{\theta_{k+1}}(d(x) + \Delta(x,y^{k+1}))
 - \frac{L_\text{d}}{2\theta_{k+1}^2}\norm{w^k - x}^2 \\
& \hspace{7cm} + \frac{L_\text{d}}{2\theta_{k+1}^2}\norm{w^{k+1} -
x}^2,
\end{align*}
where in the last inequality we used concavity of $d$. Subtracting
now $d(x)$ and multiplying with $\theta_{k+1}^2$ both hand sides, we
obtain:
\begin{align*}
& \theta_{k+1}^2(d(x^{k+1}) - d(x))\\
& \ge \theta_{k+1}(\theta_{k+1} \!-\! 1)(d(x^k) - d(x))\! +\!
\theta_{k+1}\Delta(x, y^{k+1})
\!-\! \frac{L_\text{d}}{2}\norm{w^k \!\!- x}^2 \!+\! \frac{L_\text{d}}{2}\norm{w^{k+1} \!\!- x}^2\\
& =\theta_k^2 (d(x^k) - d(x)) + \theta_{k+1}\Delta(x, y^{k+1}) -
\frac{L_\text{d}}{2}\norm{w^k-x}^2 +
\frac{L_\text{d}}{2}\norm{w^{k+1}-x}^2.
\end{align*}
Further, note that the choice $\theta_1=1$ in Algorithm {\bf (DFG)}
implies that $\theta_0=0$. On the other hand, using the iteration of
Algorithm {\bf (DFG)} we have $\norm{w^0 - x} = \norm{y^1 -
\left(1-\frac{1}{\theta_1}\right)x^0 - \frac{1}{\theta_1}x} =
\norm{x^0 -x}$. Then, summing on the history, we obtain our result.
\end{proof}


\noindent The sublinear convergence rate of Algorithm {\bf (DFG)} in
terms of dual suboptimality is given in the next lemma.
\begin{lemma} \cite{BecTeb:14,Tse:08}
\label{th_sublin_dfg} Let Assumption \ref{as_strong} hold and the
sequences $\left(x^k, y^k\right)_{k\geq 0}$ be generated by
Algorithm {\bf (DFG)}. Then, a sublinear  estimate on dual
suboptimality for dual problem \eqref{eq_dual_prob} is given by
(recall that $\mathcal{R}_\text{d}   =
\min\limits_{x^* \in X^*} \| x^0 - x^*\|$):
\begin{equation}
\label{bound_dual_optim_dfg} f^* - d({x}^{k}) \leq \frac{2
L_\text{d} \mathcal{R}_\text{d}^2}{(k+1)^2}.
\end{equation}
\end{lemma}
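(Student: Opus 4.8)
The plan is to read the bound off directly from Theorem~\ref{corr2}, which already encapsulates the essential potential-function/telescoping argument for the accelerated scheme. All that remains is to discard the nonnegative terms on the left-hand side of \eqref{corr2_relation} and then to specialize the free multiplier $x$ to an optimal dual solution.

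First I would observe that, since $d$ is concave and differentiable (as established through Danskin's theorem just before the algorithm), the quantity $\Delta(x,y) = d(y) + \langle \nabla d(y), x-y\rangle - d(x)$ is nonnegative for all $x,y \in \ca^*$: it is precisely the slack in the gradient inequality $d(x) \le d(y) + \langle \nabla d(y), x-y\rangle$ that characterizes concave functions. Since also $\theta_i \ge 0$ by \eqref{theta_1}, every summand $\theta_i \Delta(x,y^i)$ is nonnegative, and the term $\frac{L_\text{d}}{2}\norm{w^{k+1}-x}^2$ is trivially nonnegative. Dropping both contributions from \eqref{corr2_relation} leaves the cleaner inequality
\[
\theta_{k+1}^2 \bigl(d(x) - d(x^{k+1})\bigr) \le \frac{L_\text{d}}{2}\norm{x^0 - x}^2 \qquad \forall x \in \ca^*.
\]

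Next I would take $x = x^*$, where $x^* \in X^* \subseteq \ca^*$ is the point attaining $\mathcal{R}_\text{d} = \min_{x^* \in X^*}\norm{x^0 - x^*}$; as $x^*$ is dual optimal, strong duality \eqref{eq_dual_prob} gives $d(x^*) = f^*$. This yields $\theta_{k+1}^2\bigl(f^* - d(x^{k+1})\bigr) \le \frac{L_\text{d}}{2}\mathcal{R}_\text{d}^2$, that is
\[
f^* - d(x^{k+1}) \le \frac{L_\text{d}\,\mathcal{R}_\text{d}^2}{2\,\theta_{k+1}^2}.
\]
Finally I would invoke the lower bound $\theta_{k+1} \ge \frac{k+2}{2}$ coming from \eqref{theta_1}, so that $\theta_{k+1}^2 \ge \frac{(k+2)^2}{4}$ and hence $f^* - d(x^{k+1}) \le \frac{2 L_\text{d}\,\mathcal{R}_\text{d}^2}{(k+2)^2}$. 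Shifting the iteration index $k+1 \mapsto k$ reproduces exactly the claimed estimate \eqref{bound_dual_optim_dfg}.

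Since Theorem~\ref{corr2} performs all the analytical work, there is no genuine obstacle here; the only points demanding care are the bookkeeping items. One must verify the nonnegativity of $\Delta$ through \emph{concavity} (not convexity, because the dual is being maximized), and track the index shift between $\theta_{k+1}$ and the $(k+1)^2$ denominator appearing in the statement. It is also worth noting that $f^* - d(x^k) \ge 0$, so the bound is meaningful; this is immediate from weak duality, $d(x) \le f^*$ for all $x \in \ca^*$.
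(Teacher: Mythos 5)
Your proof is correct, and it is worth noting that the paper itself never proves Lemma~\ref{th_sublin_dfg}: it simply cites \cite{BecTeb:14,Tse:08}. Your derivation from Theorem~\ref{corr2} is therefore a genuine, self-contained alternative, and it is clearly the route the paper's machinery is set up for --- Theorem~\ref{corr2} is established immediately beforehand precisely so that such bounds can be read off by fixing $x$ and discarding nonnegative terms (the paper uses exactly this device later, e.g.\ to obtain \eqref{dfg_pr1} and \eqref{infes_av}). Two bookkeeping points deserve mention. First, you justify $\Delta(x,y^i)\ge 0$ by concavity ``for all $x,y\in\ca^*$'', but the extrapolated points $y^i$ need not lie in $\ca^*$ (the paper explicitly remarks that $\nabla d(y^k)$ is well defined even for $y^k\notin\ca^*$); this is harmless, since $\text{dom}\, d=\rset^p$ and $d$ is concave and differentiable on all of $\rset^p$, so the gradient inequality $d(x)\le d(y)+\langle \nabla d(y),x-y\rangle$ holds for every $x,y\in\rset^p$, which is all that is needed. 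Second, your index shift $k+1\mapsto k$ yields the stated estimate for $k\ge 1$, which is the intended range, since Algorithm \textbf{(DFG)} produces iterates $x^k$ only for $k\ge 1$; at $k=0$ the bound is not claimed by the dynamics of the method and would require a separate (and unnecessary) argument.
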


\noindent Our iteration complexity analysis for Algorithm {\bf
(DFG)}  is based on two types of approximate primal solutions:  the
last primal iterate sequence $(v^k)_{k \geq 0}$ defined as
\begin{align}
\label{av_dfg1} v^k = u(x^k) =\arg \min\limits_{v \in U}
\mathcal{L}(v,x^k),
\end{align}
or an average primal sequence $(\hat u^k)_{k \geq 0}$ of the form
\begin{align}
\label{av_dfg2} \hat u^k =\sum_{j=0}^k \frac{\theta_j}{S_k^\theta}
u^j,   \quad \text{with} \quad S_k^\theta = \sum\limits_{j=0}^k
\theta_j.
\end{align}


\subsection{Sublinear convergence in the last primal iterate}
\label{sec_dfglast} \noindent In this section we derive sublinear
convergence estimates for  primal infeasibility  and suboptimality
for the last primal iterate sequence $(v^k)_{k \geq 0}$ as defined
in \eqref{av_dfg1} of Algorithm~\textbf{(DFG)}.

\begin{theorem}
\label{th_dfglast}  Let Assumption \ref{as_strong} hold and the
sequences $\left(x^k, y^k, u^k\right)_{k\geq 0}$ be generated by
Algorithm {\bf (DFG)}.  Then, for a given accuracy $\epsilon>0$ we
get an $\epsilon$-primal solution for \eqref{eq_prob_princc} in the
last primal iterate  $v^k = u(x^k)$ of Algorithm \textbf{(DFG)}
after $k = {\mathcal O} (\frac{1}{\epsilon})$ iterations.
\end{theorem}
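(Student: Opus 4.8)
The plan is to follow the same three-step template as in the proof of Theorem~\ref{th_dglast} (distance to $u^*$, feasibility, suboptimality), but exploiting the faster dual rate \eqref{bound_dual_optim_dfg} of order $\mathcal{O}(1/k^2)$ in place of the $\mathcal{O}(1/k)$ rate used there. The starting point is to combine inequality \eqref{ineq_x} of Lemma~\ref{lemma_sc} with the dual suboptimality bound \eqref{bound_dual_optim_dfg}, which immediately yields
\begin{equation*}
\frac{\sigma_\text{f}}{2}\|v^k - u^*\|^2 \leq f^* - d(x^k) \leq \frac{2 L_\text{d} \mathcal{R}_\text{d}^2}{(k+1)^2},
\end{equation*}
so that $\|v^k - u^*\| \leq \frac{2\mathcal{R}_\text{d}}{k+1}\sqrt{L_\text{d}/\sigma_\text{f}} = \mathcal{O}(1/k)$. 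Feasibility then follows directly from \eqref{ineq_feas2}: since $L_\text{d} = \|G\|^2/\sigma_\text{f}$, one gets $\text{dist}_\ca(G v^k + g) \leq \|G\|\,\|v^k - u^*\| \leq \frac{2 L_\text{d}\mathcal{R}_\text{d}}{k+1} = \mathcal{O}(1/k)$.

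The genuinely new difficulty is the suboptimality estimate \eqref{ineq_opt}, namely $|f(v^k) - f^*| \leq \|G\|(\|x^k - x^*\| + \|x^*\|)\|v^k - u^*\|$: to make the right-hand side $\mathcal{O}(1/k)$ I need $\|x^k - x^*\|$ bounded by a constant independent of $k$. For Algorithm~\textbf{(DG)} this was free from Lemma~\ref{lemma1_dg} (the dual iterates contract toward $X^*$), but the accelerated iterates of \textbf{(DFG)} carry no such monotonicity, so this is the main obstacle. My plan is to extract the bound from Theorem~\ref{corr2}. Setting $x = x^* \in X^*$ there, the term $d(x^*) - d(x^{k+1}) = f^* - d(x^{k+1})$ is nonnegative and each $\Delta(x^*, y^i) = d(y^i) + \langle \nabla d(y^i), x^* - y^i\rangle - d(x^*)$ is nonnegative by concavity of $d$; dropping these nonnegative summands leaves $\frac{L_\text{d}}{2}\|w^{k+1} - x^*\|^2 \leq \frac{L_\text{d}}{2}\|x^0 - x^*\|^2$, i.e. $\|w^{k+1} - x^*\| \leq \mathcal{R}_\text{d}$ for every $k \geq 0$.

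It then remains to transfer this bound from the auxiliary sequence $w^k$ to the iterates $x^k$ themselves. Solving the definition $w^k = x^{k-1} + \theta_k(x^k - x^{k-1})$ for $x^k$ gives $x^k = \frac{1}{\theta_k} w^k + (1 - \frac{1}{\theta_k}) x^{k-1}$, which, because $\theta_k \geq 1$ by \eqref{theta_1}, is a genuine convex combination of $w^k$ and $x^{k-1}$. Convexity of the norm then yields the recursion $\|x^k - x^*\| \leq \frac{1}{\theta_k}\|w^k - x^*\| + (1 - \frac{1}{\theta_k})\|x^{k-1} - x^*\|$, and a straightforward induction starting from $\|x^0 - x^*\| = \mathcal{R}_\text{d}$ (taking $x^* = [x^0]_{X^*}$) and using $\|w^k - x^*\| \leq \mathcal{R}_\text{d}$ propagates the uniform bound $\|x^k - x^*\| \leq \mathcal{R}_\text{d}$ to all $k$.

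Plugging this into \eqref{ineq_opt} together with the $\mathcal{O}(1/k)$ bound on $\|v^k - u^*\|$ gives $|f(v^k) - f^*| \leq \|G\|(\mathcal{R}_\text{d} + \|x^*\|)\|v^k - u^*\| = \mathcal{O}(1/k)$. With feasibility violation and two-sided suboptimality all of order $\mathcal{O}(1/k)$, requiring each to be at most $\epsilon$ forces $k = \mathcal{O}(1/\epsilon)$, which is the claim. The only delicate point is the convex-combination and induction argument of the previous paragraph; everything else is a direct substitution into the preliminary lemmas.
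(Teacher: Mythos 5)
Your proposal is correct and follows essentially the same route as the paper: the same combination of \eqref{ineq_x}, \eqref{bound_dual_optim_dfg} and \eqref{ineq_feas2} for the distance and feasibility bounds, and the same key step of setting $x = x^*$ in Theorem \ref{corr2}, dropping the nonnegative terms to get $\|w^{k+1}-x^*\| \le \mathcal{R}_\text{d}$, and transferring this to $\|x^k - x^*\| \le \mathcal{R}_\text{d}$ by induction (your convex-combination phrasing of the recursion $x^k = \tfrac{1}{\theta_k}w^k + (1-\tfrac{1}{\theta_k})x^{k-1}$ is just a rewording of the paper's triangle-inequality-plus-max argument establishing \eqref{dfg_pr1}). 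Plugging into \eqref{ineq_opt} and concluding $k = \mathcal{O}(1/\epsilon)$ matches the paper's proof exactly.
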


\begin{proof} Let us notice that $v^k = u(x^k)$ (see \eqref{av_dfg1}). Firstly,
combining \eqref{ineq_x} and \eqref{bound_dual_optim_dfg} we obtain
the following important relation characterizing the distance from
the last iterate $v^k$ to the unique optimal solution $u^*$ of our
original problem \eqref{eq_prob_princc}:
\begin{align}
\label{dist_sublin_dfg} \| v^k - u^* \| \leq  \sqrt{\frac{
L_\text{d}}{\sigma_\text{f}}} \frac{2 \mathcal{R}_\text{d}}{k+1}.
\end{align}
Secondly, combining the previous relation \eqref{dist_sublin_dfg}
and \eqref{ineq_feas2} we obtain a sublinear estimate for
feasibility violation of the last iterate $v^k$ for Algorithm
\textbf{(DFG)}:
\begin{align}
\label{fes_sublin_f} \text{dist}_{\mathcal{K}}(Gv^k + g)  & \leq
\|G\| \| \bo{v}^k - u^* \| \leq  \|G\|
\sqrt{\frac{L_\text{d}}{\sigma_\text{f}}}
 \frac{2\mathcal{R}_\text{d}}{k+1} \nonumber \\
& = \sqrt{\frac{L_\text{d} \|G\|^2}{\sigma_\text{f}}}
\frac{2\mathcal{R}_\text{d}}{k+1}  = \frac{2 L_\text{d}
\mathcal{R}_\text{d}}{k+1},
\end{align}
where we again used $L_\text{d} = \|G\|^2/\sigma_\text{f}$. Finally,
we derive a sublinear estimate for primal suboptimality of the last
iterate $v^k$. We first prove  that $\norm{x^k - x^*} \le
\norm{x^0-x^*}$.  Indeed, taking $x = x^*$ in Theorem \ref{corr2}
and using that  the terms $\theta_{k+1}(f^*-d(x^{k+1}))$ and
$\sum\limits_{i=1}^{k+1}\theta_{i} \Delta(x^*,y^i)$ are positive we
have:
\begin{align*}
\norm{x^0-x^*} & \geq   \norm{w^{k+1}- x^*} =
\theta_{k+1}\norm{x^{k+1} - x^* - \left(1 -
\frac{1}{\theta_{k+1}}\right) (x^k  - x^*) }.
\end{align*}
Using the triangle inequality and dividing by
$\theta_{k+1}$, we further have:
\begin{align*}
\norm{x^{k+1} - x^*} &\le \left(1- \frac{1}{\theta_{k+1}}\right) \norm{x^k - x^*}
+ \frac{1}{\theta_{k+1}}\norm{x^0-x^*} \\
& \le \max \{ \norm{x^k  -x^*}, \norm{x^0 - x^*} \}.
\end{align*}
Using an inductive argument, we can conclude that:
\begin{align}
\label{dfg_pr1} \norm{x^k - x^*} \le \norm{x^0-x^*} \qquad
\forall k \ge 0.
\end{align}
Combining \eqref{dfg_pr1} with relations \eqref{ineq_opt} and
\eqref{dist_sublin_dfg} and using the definition of
$\mathcal{R}_\text{d}$ we obtain:
\begin{align}
\label{opt_sublin_f} |f(v^k)-f^*|  & = |f(u(x^k))-f^*| \le
\left(\norm{x^0-x^*} + \norm{x^*} \right) \|G\| \sqrt{\frac{
L_\text{d}}{\sigma_\text{f}}}
\frac{2\mathcal{R}_\text{d}}{k+1} \nonumber \\
& \leq (2\mathcal{R}_\text{d} + \| x^0\|)
\frac{2L_\text{d}\mathcal{R}_{\text{d}}}{k+1}.
\end{align}
\noindent In conclusion, we   have obtained sublinear estimates  of
order $\mathcal{O}(\frac{1}{k})$ for primal infeasibility
(inequality \eqref{fes_sublin_f}) and primal suboptimality
(inequality \eqref{opt_sublin_f}) for the last primal  iterate
sequence $(v^k)_{k \geq 0}$ generated by Algorithm \textbf{(DFG)}.
Now, if we want to get an $\epsilon$-primal solution in $v^k$ we
need to perform $k = {\mathcal O} (\frac{1}{\epsilon})$ iterations.
\end{proof}

\noindent In \cite{BecNed:14} estimates  of order
$\mathcal{O}(\frac{1}{k})$ have been given  for primal infeasibility
and suboptimality for the last primal  iterate $v^k$ generated by
Algorithm \textbf{(DFG)}.  However, those derivations are based on
the assumption of Lipschitz continuity of the objective function
$f$, while in our derivations we do not need to impose this
additional condition,  since our proofs make use explicitly of the
properties of the algorithm as given in Theorem \ref{corr2} and the
inequality \eqref{dfg_pr1}. Note that for some applications the
assumption of Lipschitz continuity of  objective function $f$ may be
conservative: e.g.  quadratic objective function $f$ and unbounded
set $U$.

\vspace{0.2cm}

\noindent Finally, we consider the application of dual fast
gradient Algorithm (\textbf{DFG}) for the regularization of the dual
problem of \eqref{eq_prob_princc}, i.e.:
\begin{align} \label{dual_reg_cone}
d_\delta^* = \max_{x \in \ca^*} \; d_\delta(x)  \qquad \left( = d(x)
- \frac{\delta}{2} \|x - x^0\|^2 \right).
\end{align}
Note that regularization strategies have been also used  in other
papers, e.g. in order to make the norm of the gradient of some
objective function small by using first order methods
\cite{Nes:12,LanMon:08}. We show in the sequel that by
regularization  we can improve substantially the convergence rate of
dual fast gradient method  in the last iterate.   Denoting
$x^*_\delta$ the optimal solution of \eqref{dual_reg_cone}, its
optimality conditions are given by:
\begin{equation}\label{optcond_regdelta}
 \langle Gu(x^*_{\delta}) + g + \delta (x^*_{\delta}  - x^0), x  - x^*_{\delta}\rangle
 \ge 0 \qquad \forall x \in \mathcal{K}^*.
\end{equation}
Note that the regularized dual objective function $d_\delta (\cdot)$
in \eqref{dual_reg_cone} is strongly concave with
$\sigma_{\text{d},\delta}=\delta$ and has Lipschitz gradient with
$L_{\text{d},\delta} = L_\text{d} + \delta$. Then, if we replace in
Step 3 of Algorithm (\textbf{DFG}) the term $\frac{\theta_k -
1}{\theta_{k+1}}$ with the constant term $
\frac{\sqrt{L_{\text{d},\delta}} -
\sqrt{\sigma_{\text{d},\delta}}}{\sqrt{L_{\text{d},\delta}} +
\sqrt{\sigma_{\text{d},\delta}}}$, i.e.:
\[  y^{k+1} = x^k  + \frac{\sqrt{L_{\text{d},\delta}} -
\sqrt{\sigma_{\text{d},\delta}}}{\sqrt{L_{\text{d},\delta}} +
\sqrt{\sigma_{\text{d},\delta}}} \left( x^k - x^{k-1} \right),  \]
the modified dual fast gradient algorithm achieves linear
convergence \cite{Nes:04}. More precisely,  for solving  the
regularized dual problem \eqref{dual_reg_cone}  with the modified
Algorithm (\textbf{DFG}) described above we have the convergence
rate:
\[  d_\delta^* - d_\delta(x^k) \leq  \left( 1 - \sqrt{\frac{\delta}{L_\text{d} +
\delta}} \right)^k \frac{L_\text{d} + 2 \delta}{2} \| x^0 -
x^*_\delta \|^2.  \]

\noindent We want to find first an upper bound on $\| x^0 -
x^*_\delta \|$ in terms of  $\mathcal{R}_{\text{d}}$. Since
$d_\delta (\cdot)$ is $\delta$-strongly concave function and
$d(x^*_\delta) \leq d(x^*)$, we have:
\begin{align*}
\|x^* - x^*_\delta \|^2 & \leq \frac{2}{\delta} (d_\delta^* -
d_\delta (x^*)) = \frac{2}{\delta} \left( d(x^*_\delta)  - \frac{\delta}{2} \|x^*_\delta - x^0\|^2 - d(x^*) + \frac{\delta}{2} \|x^* - x^0\|^2 \right) \\
& \leq \| x^* - x^0\|^2.
\end{align*}
Based on the previous inequality we can bound  $\| x^0 - x^*_\delta
\|$ as follows:
\begin{align*}
\| x^0 - x^*_\delta \| \leq \|x^0 - x^*\| + \|x^* - x^*_\delta \|
\leq 2 \|x^0 - x^* \| = 2 \mathcal{R}_{\text{d}}.
\end{align*}

\noindent Thus, the number of iterations $k$ we need in order to
attain $\epsilon^2$ accuracy, i.e. $d_\delta^* - d_\delta(x^k) \leq
\epsilon^2$,  is given by:
\begin{align}
\label{logeps_cone} k =2 \sqrt{\frac{L_\text{d}+\delta}{\delta}}
\log \left( \frac{  \mathcal{R}_{\text{d}} \sqrt{2(L_\text{d} +
2\delta)}}{\epsilon} \right).
\end{align}
Since $d_\delta^* = d_\delta (x_\delta^*) \geq d_\delta(x^*) = f^* -
\frac{\delta}{2} \|x^*  - x^0\|^2$ and $d_\delta (\cdot) \leq
d(\cdot)$, we get that after the number of iterations
\eqref{logeps_cone} we have from $d_\delta^* - d_\delta(x^k) \leq
\epsilon^2$ that:
\[ f^* - d(x^k) \leq \frac{\delta}{2} \|x^* - x^0\|^2 + \epsilon^2 = \frac{\delta}{2} \mathcal{R}_{\text{d}}^2  + \epsilon^2.
\]
Let us assume  for simplicity that $\epsilon^2 \leq \epsilon/2$ and
choose:
\begin{align}
\label{delta_cone} \delta =
\frac{\epsilon}{\mathcal{R}_{\text{d}}^2}.
\end{align}
Then, we get an estimate on dual suboptimality for the  dual problem
of  \eqref{eq_prob_princc}:
\[ f^* - d(x^k) \leq \epsilon.  \]

\noindent We are now ready to prove one the main results of this
paper:
\begin{theorem}
\label{th_dfglast_cone}  Let Assumption \ref{as_strong} hold and the
sequences $\left(x^k, y^k, u^k\right)_{k\geq 0}$ be generated by the
modified  Algorithm {\bf (DFG)}. Then, for a given accuracy
$\epsilon>0$ we get an $\epsilon$-primal solution for
\eqref{eq_prob_princc} in the last primal  iterate $v^{k}$ of
modified  Algorithm \textbf{(DFG)} after $k =
\mathcal{O}(\frac{1}{\sqrt{\epsilon}} \log(\frac{1}{\epsilon}))$
iterations.
\end{theorem}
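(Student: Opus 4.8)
The plan is to treat the construction that precedes the statement as a ready-made dual certificate and to spend all the effort on converting it into primal guarantees for the last iterate. Concretely, with the regularization level fixed at $\delta = \frac{\epsilon}{\mathcal{R}_\text{d}^2}$ as in \eqref{delta_cone} and the counter $k$ chosen as in \eqref{logeps_cone}, the derivation just above the theorem already delivers the dual-suboptimality bound $f^* - d(x^k) \leq \epsilon$ together with the regularized bound $d_\delta^* - d_\delta(x^k) \leq \epsilon^2$. Hence the remaining task is purely to show that the induced last primal iterate $v^k = u(x^k)$ from \eqref{av_dfg1} is near-feasible and near-optimal, for which I would invoke the preliminary Lemma \ref{lemma_sc} and the suboptimality estimate \eqref{ineq_opt}, exactly as in the proof of Theorem \ref{th_dfglast}.

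First I would feed the dual-suboptimality bound into \eqref{ineq_x} to control the distance to the true minimizer: since $\frac{\sigma_\text{f}}{2}\|v^k - u^*\|^2 \leq f^* - d(x^k) \leq \epsilon$, one gets $\|v^k - u^*\| \leq \sqrt{2\epsilon/\sigma_\text{f}}$. Feasibility then follows from \eqref{ineq_feas2}, giving $\text{dist}_\ca(G v^k + g) \leq \|G\|\,\|v^k - u^*\| \leq \|G\|\sqrt{2\epsilon/\sigma_\text{f}}$, and suboptimality from \eqref{ineq_opt}, giving $|f(v^k) - f^*| \leq \|G\|\big(\|x^k - x^*\| + \|x^*\|\big)\|v^k - u^*\|$. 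Both quantities are controlled by the square root of the attained dual accuracy, which is precisely the mechanism that let the plain fast gradient method reach an $\mathcal{O}(1/k)$ primal rate from an $\mathcal{O}(1/k^2)$ dual rate.

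The step I expect to be the main obstacle is controlling $\|x^k - x^*\|$ inside \eqref{ineq_opt}. For the plain scheme this was exactly the content of \eqref{dfg_pr1}, whose proof rested on the averaging/non-expansiveness structure in Theorem \ref{corr2}; but the modified scheme replaces the momentum coefficient $\frac{\theta_k - 1}{\theta_{k+1}}$ by the constant $\frac{\sqrt{L_{\text{d},\delta}} - \sqrt{\sigma_{\text{d},\delta}}}{\sqrt{L_{\text{d},\delta}} + \sqrt{\sigma_{\text{d},\delta}}}$, so \eqref{dfg_pr1} is no longer available verbatim. Here I would instead exploit the $\delta$-strong concavity of the regularized dual $d_\delta$ from \eqref{dual_reg_cone}: from $\frac{\delta}{2}\|x^k - x^*_\delta\|^2 \leq d_\delta^* - d_\delta(x^k) \leq \epsilon^2$ one obtains $\|x^k - x^*_\delta\| \leq \epsilon\sqrt{2/\delta} = \mathcal{R}_\text{d}\sqrt{2\epsilon}$, and combining this with the bound $\|x^*_\delta - x^*\| \leq \mathcal{R}_\text{d}$ already established before the statement (via strong concavity of $d_\delta$ and $d(x^*_\delta) \leq d(x^*)$) yields, by the triangle inequality, $\|x^k - x^*\| \leq \mathcal{R}_\text{d}(1 + \sqrt{2\epsilon})$. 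This keeps the prefactor in \eqref{ineq_opt} uniformly bounded in $\epsilon$.

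Finally I would assemble the pieces: the feasibility and suboptimality bounds are both controlled by $\|G\|$, $\|x^*\|$, $\mathcal{R}_\text{d}$, $\sigma_\text{f}$ and the dual accuracy, so $v^k$ satisfies the defining relations \eqref{epsilon_primal} of an $\epsilon$-primal solution; reading off the iteration count from \eqref{logeps_cone} with $\delta = \epsilon/\mathcal{R}_\text{d}^2$ gives $k = \mathcal{O}\!\left(\frac{1}{\sqrt{\epsilon}}\log\frac{1}{\epsilon}\right)$, since $\sqrt{(L_\text{d} + \delta)/\delta} = \mathcal{O}(1/\sqrt{\epsilon})$ while the logarithmic factor is $\mathcal{O}(\log(1/\epsilon))$. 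The only delicate bookkeeping is to track, consistently inside the $\mathcal{O}(\cdot)$ notation, the square-root relation between the dual accuracy certified by the regularized scheme and the resulting primal accuracy, just as was done for the unregularized fast gradient method in Theorem \ref{th_dfglast}.
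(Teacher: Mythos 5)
There is a genuine gap: your reduction to the \emph{unregularized} dual gap loses exactly the square root that the regularization was introduced to avoid, so the argument cannot reach the claimed rate. Concretely, after the $k=\mathcal{O}(\frac{1}{\sqrt{\epsilon}}\log\frac{1}{\epsilon})$ iterations of \eqref{logeps_cone} with $\delta=\epsilon/\mathcal{R}_\text{d}^2$, what you have is $f^*-d(x^k)\leq\epsilon$; feeding this into \eqref{ineq_x} gives only $\|v^k-u^*\|\leq\sqrt{2\epsilon/\sigma_\text{f}}$, and then \eqref{ineq_feas2} and \eqref{ineq_opt} give $\text{dist}_\ca(Gv^k+g)=\mathcal{O}(\sqrt{\epsilon})$ and $|f(v^k)-f^*|=\mathcal{O}(\sqrt{\epsilon})$. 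That is a $\sqrt{\epsilon}$-primal solution, not an $\epsilon$-primal solution in the sense of \eqref{epsilon_primal}. To force $\mathcal{O}(\epsilon)$ primal bounds through your pipeline you would need the true dual gap to be $\epsilon^2$; since the regularization only yields $f^*-d(x^k)\leq\frac{\delta}{2}\mathcal{R}_\text{d}^2+\epsilon^2$, this requires $\delta=\mathcal{O}(\epsilon^2)$, and then \eqref{logeps_cone} gives $k=\mathcal{O}(\frac{1}{\epsilon}\log\frac{1}{\epsilon})$ --- no better (up to the log factor) than the unmodified scheme of Theorem \ref{th_dfglast}. Your final paragraph gestures at ``tracking the square-root relation'' in the $\mathcal{O}(\cdot)$ bookkeeping, but tracking it consistently is precisely what shows this route cannot prove the theorem.

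The paper's proof never passes through $\|v^k-u^*\|$, and this is the whole point of the regularization: the quantities that control the primal bounds are measured against the \emph{regularized} problem, whose gap is $\epsilon^2$. The $\delta$-strong concavity gives $\|x^k-x^*_\delta\|\leq\epsilon\sqrt{2/\delta}$ (your own observation, which is correct), and the Lipschitz-gradient inequality for $d_\delta$ gives $\|\nabla^+ d_\delta(x^k)\|\leq\sqrt{2L_{\text{d},\delta}}\,\epsilon$ --- both of order $\epsilon$, not $\sqrt{\epsilon}$, after substituting $\delta=\epsilon/\mathcal{R}_\text{d}^2$. Primal feasibility then follows from the gradient-map bound \eqref{decrease_ggm} of Lemma \ref{lemma2_dg} applied to $d_\delta$, plus a $\delta\|x^k-x^0\|$ correction, yielding $\text{dist}_\ca(Gv^k+g)\leq 4\epsilon(\sqrt{L_\text{d}^3}+1/\mathcal{R}_\text{d})$; the upper bound on $f(v^k)-f^*$ requires a separate argument combining the inner-problem optimality conditions \eqref{optcond_inner}, the optimality conditions \eqref{optcond_regdelta} of $x^*_\delta$, and the gradient-difference bound \eqref{diff_grad}, all evaluated at the $\epsilon^2$ accuracy of the regularized gap. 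Ironically, the step you flagged as the main obstacle --- bounding $\|x^k-x^*\|$ without \eqref{dfg_pr1} --- you resolve correctly and it is not where the difficulty lies; what must be replaced is the entire Lemma \ref{lemma_sc} pipeline, substituting for it the gradient-map and optimality-condition arguments above.
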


\begin{proof}
\noindent First,  we determine a bound on $\|x^k - x^*_\delta\|$.
 Since $d_\delta (\cdot)$ is $\delta$-strongly concave function and
$d_\delta^* - d_\delta(x^k) \leq   \epsilon^2$, we have:
\begin{align}
\label{bound_delta_cone} \|x^k - x^*_\delta \|^2 \leq
\frac{2}{\delta} (d_\delta^* - d_\delta (x^k)) \leq \frac{2
\epsilon^2}{\delta}.
\end{align}
Moreover, since  the dual gradient $\nabla d_{\delta}$ is Lipschitz,
it    satisfies \cite{Nes:04}:
\[ d^*_{\delta} \geq d_\delta \left( \left[ x^k + \frac{1}{L_{d,\delta}} \nabla d_\delta(x^k) \right]_{\mathcal{K}^*} \right)
\geq d_\delta (x^k) + \frac{L_{d,\delta}}{2} \|\nabla^+ d_\delta
(x^k)\|^2.
\]
Using that $d_\delta^* - d_\delta(x^k) \leq   \epsilon^2$ in the
previous inequality we obtain:
\[  \|\nabla^+ d_\delta (x^k)\|^2 \leq 2 L_{d,\delta} \epsilon^2.  \]
\noindent Now using the previous bound on $\|\nabla^+ d_\delta
(x^k)\|$, the fact that $v^k = u(x^k)$ and  the expressions  of
$\nabla d_\delta(x) = \nabla d(x) - \delta (x - x^0)$ and $\nabla
d(x) = - G u(x) - g$ we get an estimate on primal infeasibility in
the last primal iterate $v^k$:
\begin{align*}
 d_{\ca}(G v^k + g) &  = d_{\ca}(- \nabla d(x^k)) \le \| - \nabla d(x^k) -  \left[\nabla d_{\delta}(x^k) \right]_{\ca}\| \\
& \le d_{\ca}(- \nabla d_\delta (x^k)) + \delta \|x^k - x^0\|
\overset{\eqref{decrease_ggm}}{\le} L_{\text{d},\delta} \| \nabla^+
d_\delta (x^k)\| + \delta \|x^k - x^0\| \\
& \leq \sqrt{2 L_{d,\delta}^3} \epsilon + \delta (\|x^k -
x^*_\delta\| + \|x^0 - x^*_\delta\|)
\overset{\eqref{bound_delta_cone}}{\leq} \sqrt{2 L_{d,\delta}^3}
\epsilon + \epsilon \sqrt{2 \delta} + 2 \delta  R_\text{d} \\
& \overset{\eqref{delta_cone}}{\leq} 4\epsilon
\left(\sqrt{L_\text{d}^3} + \frac{1}{\mathcal{R}_{\text{d}}}
\right).
\end{align*}

\noindent In order to derive a convergence estimate  on  primal
suboptimality, we observe that  for any $x \in \mathcal{K}^*$, the
optimality conditions of the inner subproblem are given by:
\begin{equation}\label{optcond_inner}
 \langle \nabla f(u(x))  - G^Tx, u  - u(x)\rangle \ge 0 \qquad \forall u \in U.
\end{equation}
Since $d_{\delta}(\cdot)$ is concave and has Lipschitz continuous
gradient, it satisfies \cite{Nes:04}:
\begin{equation*}
 d_{\delta}(x) \le d_{\delta}(y) + \langle \nabla d_{\delta}(y), x - y\rangle
  - \frac{1}{2L_{\text{d},\delta}}\norm{\nabla d_{\delta}(x) - \nabla d_{\delta}(y)}^2
 \quad \forall x,y \in \ca^*.
\end{equation*}
Taking into account the expression for $\nabla d_{\delta}$, using $y
= x^*_{\delta}$ in the previous inequality and the optimality
conditions for $x^*_{\delta}$, we have:
\begin{equation}\label{diff_grad}
 \norm{\nabla d(x) - \nabla d(x^*_{\delta})} - \delta \norm{x-x^*_{\delta}} \le
 \norm{\nabla d_{\delta}(x) - \nabla d_{\delta}(x^*_{\delta})}
 \le \sqrt{2(L_{d}+\delta)(d^*_{\delta} - d_{\delta}(x))}.
\end{equation}

\noindent On the other hand, using the strong convexity of the
function $f$ and taking $u = u^*_{\delta} = u(x^*_{\delta})$ in
\eqref{optcond_inner}, we obtain:
\begin{equation*}
f(u(x)) - f(u^*_{\delta}) + \frac{\sigma}{2}\norm{u^*_{\delta} -
u(x)}^2 \le \langle x, G u(x) - G u^*_{\delta}\rangle.
\end{equation*}
Adding in both sides the term $\langle x^*_{\delta}, G u^*_{\delta}
+ g\rangle + \frac{\delta}{2}\norm{x^*_{\delta}-x^0}^2$, we get:
\begin{align*}
f(u(x)) - d^*_{\delta}
&\le \langle x, G u(x) - G u^*_{\delta}\rangle + \langle x^*_{\delta}, G u^*_{\delta} + g\rangle + \frac{\delta}{2}\norm{x^*_{\delta}-x^0}^2\\
& = \langle x, \nabla d(x^*_{\delta}) - \nabla d(x) \rangle +
\langle x^*_{\delta}, G u^*_{\delta} + g\rangle +
\frac{\delta}{2}\norm{x^*_{\delta}-x^0}^2.
\end{align*}
Taking $x = 0$ in \eqref{optcond_regdelta}, using \eqref{diff_grad},
Lipschitz gradient property of $d(\cdot)$,the fact that
$d_{\delta}^* \le f^*$, the Cauchy-Schwartz inequality and previous
inequality, we obtain:
\begin{align*}
&f(u(x)) - f^* \le f(u(x)) - d^*_{\delta}
\le \langle x, \nabla d(x^*_{\delta}) - \nabla d(x) \rangle + \langle x^*_{\delta}, G u^*_{\delta} + g\rangle + \frac{\delta}{2}\norm{x^*_{\delta}-x^0}^2\\
&\le \norm{x -x^*_{\delta}} \norm{ \nabla d(x^*_{\delta}) - \nabla
d(x)} + \norm{x^*_{\delta}} \norm{ \nabla d(x^*_{\delta}) - \nabla
d(x)} +
\langle x^*_{\delta}, G u^*_{\delta} + g\rangle + \frac{\delta}{2}\norm{x^*_{\delta}-x^0}^2\\
& \overset{\eqref{diff_grad}}{\le} L_{\text{d}}\norm{x -x^*_{\delta}}^2  + \norm{x^*_{\delta}}(\sqrt{2(L_{d}+\delta)(d^*_{\delta} - d_{\delta}(x))} + \delta\norm{x^0-x^*_{\delta}}) \\
& \hspace{8cm} + \langle x^*_{\delta}, G u^*_{\delta} + g\rangle + \frac{\delta}{2}\norm{x^*_{\delta}-x^0}^2\\
& \overset{\eqref{optcond_regdelta}}{\le} L_{\text{d}}\norm{x
-x^*_{\delta}}^2 + \norm{x^*_{\delta}}
\left(\sqrt{2(L_{d}+\delta)(d^*_{\delta} - d_{\delta}(x))} +
\delta\norm{x^0-x^*_{\delta}}\right) \\
& \hspace{6.5cm} + \frac{\delta}{2}\left( \norm{x^0}^2 - \norm{x^*_{\delta}}^2 - \norm{x^*_{\delta} - x^0}^2 \right) + \frac{\delta}{2}\norm{x^*_{\delta}-x^0}^2\\
& \le \frac{2L_{\text{d}}}{\sigma} (d^*_{\delta} - d_{\delta}(x)) +
\norm{x^*_{\delta}} \left(\sqrt{2(L_{d}+\delta)(d^*_{\delta} -
d_{\delta}(x))} + \delta\norm{x^0-x^*_{\delta}}\right)
+\frac{\delta}{2}\norm{x^0}^2.
\end{align*}
On the other hand, we have:
\begin{align*}
f^* &=  \min_{u \in U} f(u) + \langle x^*, g(u) \rangle \leq f(u(x))
+ \langle x^*, g(u(x)) \rangle \nonumber\\
& =  f(u(x)) + \langle x^*, -G u(x) - g \rangle \leq f(u(x))
+ \langle x^*,  [G u(x) + g]_{\ca} - (G u(x) + g) \rangle \nonumber\\
& \leq f(u(x)) + \norm{x^*} \text{dist}_{\mathcal{K}}(G u(x) + g).
\end{align*}

\noindent Therefore, using \eqref{delta_cone} and the facts that
$\epsilon^2 \le \frac{\epsilon}{2}$ and $v^k=u(x^k)$, we derive the
convergence rate for primal suboptimality from the previous
estimates on suboptimality and infeasibility:
\begin{align*}
 - 4\epsilon \left(R_{\text{d}} + \norm{x^0}\right) \left(\sqrt{L_\text{d}^3} +
\frac{1}{\mathcal{R}_{\text{d}}} \right) \le  f(v^k) - f^* \le
\epsilon C_r,
\end{align*}
where $C_r = \frac{L_{\text{d}}}{\sigma} + \left(2 R_{\text{d}} +
\norm{x^0} \right) \left(\sqrt{2(L_{\text{d}} + \delta)} +
\frac{2}{R_{\text{d}}} \right) +
\frac{\norm{x^0}^2}{R_{\text{d}}^2}$.

\vspace{5pt}

\noindent Now, if we replace the expression  for $\delta$ from
\eqref{delta_cone} in the expression of $k$ from \eqref{logeps_cone}
it follows that we obtain $\epsilon$-accuracy for primal
suboptimality and infeasibility in the last primal iterate $v^k$ for
the modified Algorithm (\textbf{DFG})  after $k =
\mathcal{O}(\frac{1}{\sqrt{\epsilon}} \log(\frac{1}{\epsilon}))$
iterations.
\end{proof}

\noindent From Theorem \ref{th_dfglast_cone} it follows that we
obtain $\epsilon$-accuracy for primal suboptimality and
infeasibility for the modified Algorithm (\textbf{DFG}) in the last
primal iterate $v^{k}$ after $k =
\mathcal{O}(\frac{1}{\sqrt{\epsilon}} \log(\frac{1}{\epsilon}))$
iterations which is  better than $\mathcal{O}(\frac{1}{\epsilon})$
iterations obtained in Theorem \ref{th_dfglast}  for the last primal
iterate $v^k$ or in \cite{BecNed:14}. From our knowledge Theorem
\ref{th_dfglast_cone} provides the best convergence rate  for dual
fast gradient method in the last iterate.  However, the modified
algorithm needs to know the parameter $\delta$, that according to
\eqref{delta_cone}, is depending on $\mathcal{R}_{\text{d}}$.  In
practice, we need to know an  estimate of $\mathcal{R}_{\text{d}}$.


\subsection{Sublinear convergence in an average  primal sequence}
\label{sec_dfgav} \noindent In this section we derive sublinear
estimates for primal infeasibility and suboptimality of the average
primal sequence $(\hat u^k)_{k \ge 0}$ as defined in \eqref{av_dfg2}
for Algorithm \textbf{(DFG)}. 

\begin{theorem}
\label{th_dfgav} Let Assumption \ref{as_strong} hold and the
sequences $\left(x^k, y^k, u^k\right)_{k\geq 0}$ be generated by
Algorithm {\bf (DFG)}.  Then, for a given accuracy $\epsilon>0$ we
get an $\epsilon$-primal solution for \eqref{eq_prob_princc} in the
average primal iterate  $\hat u^k$ of Algorithm \textbf{(DFG)} after
$k = {\mathcal O} (\frac{1}{\sqrt{\epsilon}})$ iterations.
\end{theorem}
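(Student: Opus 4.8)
The plan is to mirror the structure of the proof of Theorem \ref{th_dgav} (the average-sequence result for Algorithm \textbf{(DG)}), but to replace the elementary gradient recursion by the fast-gradient relation of Theorem \ref{corr2}, so that every estimate gains one order in $k$ and becomes $\mathcal{O}(1/k^2)$. I would establish three bounds: a feasibility estimate $\text{dist}_\ca(G\hat u^k + g) \le \mathcal{O}(1/k^2)$, an upper bound $f(\hat u^k) - f^* \le \mathcal{O}(1/k^2)$, and a matching lower bound $f(\hat u^k) - f^* \ge -\mathcal{O}(1/k^2)$. Since $\theta_k \ge (k+1)/2$ gives $S_k^\theta = \theta_k^2 \ge (k+1)^2/4$, these three estimates yield an $\epsilon$-primal solution after $k = \mathcal{O}(1/\sqrt{\epsilon})$ iterations.

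For feasibility I would first use $\theta_0 = 0$ together with $\nabla d(y^j) = -Gu^j - g$ to write $-(G\hat u^k + g) = \frac{1}{\theta_k^2}\sum_{j=1}^k \theta_j \nabla d(y^j)$. For each $j$ the projection residual $r^j := x^j - (y^j + \frac{1}{L_\text{d}}\nabla d(y^j))$ lies in $\ca$ by \eqref{prop_proj}, which lets me substitute $\nabla d(y^j) = L_\text{d}(x^j - y^j) - L_\text{d} r^j$ and peel off the $\ca$-valued part $\frac{L_\text{d}}{\theta_k^2}\sum_j \theta_j r^j$, so that $\text{dist}_\ca(G\hat u^k + g) \le \frac{L_\text{d}}{\theta_k^2}\norm{\sum_{j=1}^k \theta_j(x^j - y^j)}$. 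The key algebraic step, and what I expect to be the main obstacle, is the telescoping identity $\theta_j(x^j - y^j) = w^j - w^{j-1}$, which follows by substituting Step 3 of Algorithm \textbf{(DFG)} into $w^j = x^{j-1} + \theta_j(x^j - x^{j-1})$; with the convention $w^0 = x^0$ already used in the proof of Theorem \ref{corr2}, this collapses the sum to $w^k - x^0$. It then remains to bound $\norm{w^k - x^0}$, which I would do by applying Theorem \ref{corr2} at $x = x^*$ and dropping the nonnegative terms $\theta_k^2(f^* - d(x^k))$ and $\sum_i \theta_i \Delta(x^*, y^i)$ to obtain $\norm{w^k - x^*} \le \mathcal{R}_\text{d}$, hence $\norm{w^k - x^0} \le 2\mathcal{R}_\text{d}$, giving $\text{dist}_\ca(G\hat u^k + g) \le \frac{8 L_\text{d}\mathcal{R}_\text{d}}{(k+1)^2}$.

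For the suboptimality upper bound I would exploit convexity of $f$, namely $f(\hat u^k) - f^* \le \frac{1}{\theta_k^2}\sum_{j=1}^k \theta_j (f(u^j) - f^*)$, together with the identity $f(u^j) = d(y^j) - \langle \nabla d(y^j), y^j\rangle = \Delta(0, y^j) + d(0)$. Applying Theorem \ref{corr2} with $x = 0$ to bound $\sum_{j=1}^k \theta_j \Delta(0, y^j)$, the $d(0)$ contributions cancel and the remaining terms $-(f^* - d(x^k))$ and $-\frac{L_\text{d}}{2\theta_k^2}\norm{w^k}^2$ are nonpositive, leaving the clean bound $f(\hat u^k) - f^* \le \frac{L_\text{d}\norm{x^0}^2}{2\theta_k^2} \le \frac{2 L_\text{d}\norm{x^0}^2}{(k+1)^2}$.

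The lower bound is the standard Lagrangian argument reused from Theorem \ref{th_dgav}: using $x^* \in \ca^*$ and the projection inequality $\langle x^*, -(G\hat u^k + g)\rangle \le \langle x^*, [G\hat u^k + g]_\ca - (G\hat u^k + g)\rangle \le \norm{x^*}\,\text{dist}_\ca(G\hat u^k + g)$ gives $f(\hat u^k) - f^* \ge -\norm{x^*}\,\text{dist}_\ca(G\hat u^k + g)$, which is $\mathcal{O}(1/k^2)$ by the feasibility estimate just derived. Combining the three $\mathcal{O}(1/k^2)$ bounds and solving $C/(k+1)^2 \le \epsilon$ yields the claimed iteration count $k = \mathcal{O}(1/\sqrt{\epsilon})$.
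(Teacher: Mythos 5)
Your proposal is correct and follows essentially the same route as the paper's proof: the same telescoping identity $\theta_j(x^j-y^j)=w^j-w^{j-1}$ combined with the projection residuals in $\ca$ for feasibility, the same application of Theorem \ref{corr2} at $x=x^*$ (to bound $\|w^k-x^0\|\le 2\mathcal{R}_\text{d}$) and at $x=0$ (for the suboptimality upper bound, where your identity $f(u^j)=\Delta(0,y^j)+d(0)$ is exactly the paper's relation $\Delta(x,y^j)=\mathcal{L}(u^j,x)-d(x)$ specialized to $x=0$), and the same dual-cone argument for the lower bound. The resulting constants $\frac{8L_\text{d}\mathcal{R}_\text{d}}{(k+1)^2}$ and $\frac{2L_\text{d}\|x^0\|^2}{(k+1)^2}$ match the paper's estimates.
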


\begin{proof}  For any $j \ge 0$ we have $\left[ y^{j} +
\frac{1}{L_{\text{d}}}\nabla d(y^{j}) \right]_{\mathcal{K}^*}
=x^{j}.$ Let us  denote $z^j = y^{j} + \frac{1}{L_{\text{d}}}\nabla
d(y^{j})$.   Then, we can write as follows:
\begin{align} \label{feasibility_aux1}
\theta_j & \left( [z^j]_{\mathcal{K}^*} - z^j + \frac{1}{L_{\text{d}}}\nabla  d(y^j) \right) = \theta_j \left(\left[ y^{j} + \frac{1}{L_{\text{d}}}\nabla d(y^{j}) \right]_{\mathcal{K}^*} - y^{j} \right) \nonumber\\
& =  \theta_{j} (x^{j} - y^{j})  = \theta_{j}(x^{j} - x^{j-1}) + (\theta_{j-1} -1)(x^{j-2} - x^{j-1})\nonumber\\
& = \underbrace{x^{j-1} + \theta_{j}(x^{j}-x^{j-1})}_{w^{j}} -
\underbrace{(x^{j-2} + \theta_{j-1}(x^{j-1}-x^{j-2}))}_{w^{j-1}}.
\end{align}

\noindent Note that  $\nabla d(y^j) = - G u^j - g$.  Further, summing on the history,  multiplying by $\frac{L_{\text{d}}}{S_k^\theta}$ the previous relation and using  the definition of $\hat u^k$, we obtain:
\begin{align*}
\frac{L_\text{d}}{S_k^\theta} (w^{k}-w^0) & = L_{\text{d}}\sum\limits_{j=0}^k  \frac{\theta_j}{S_k^\theta}([z^j]_{\mathcal{K}^*} - z^j) +
\sum\limits_{j=0}^k  \frac{\theta_j}{S_k^\theta}  \nabla d(y^j) \\
& = L_{\text{d}}\sum\limits_{j=0}^k  \frac{\theta_j}{S_k^\theta}([z^j]_{\mathcal{K}^*} - z^j) - (G \hat u^k + g).
\end{align*}
Since   $[z^j]_{\mathcal{K^*}} - z^j \in \mathcal{K}$ (according to \eqref{prop_proj}), we have $L_{\text{d}}\sum\limits_{j=0}^k  \frac{\theta_j}{S_k^\theta}([z^j]_{\mathcal{K}^*} - z^j) \in \ca$. In conclusion, using the definition of the distance, we obtain:
\begin{align*}
d_{\mathcal{K}}(G\hat{u}^k + g) & \le \left\| L_{\text{d}}\sum\limits_{j=0}^k  \frac{\theta_j}{S_k^\theta}([z^j]_{\mathcal{K}^*} - z^j) - (G \hat u^k + g)  \right\| \\
&= \frac{L_\text{d}}{S_k^\theta}\norm{w^{k}-w^0} \le
\frac{4L_\text{d}}{(k+1)^2} \norm{w^k -w^{0}}.
\end{align*}

\noindent Taking $x=x^*$ in \eqref{corr2_relation} and using
 that the two  terms $\theta_{k+1}(f^*-d(x^{k+1}))$ and $\sum\limits_{i=1}^{k+1}\theta_{i}
 \Delta(x^*,y^i)$ are positive,
we get $\| w^k - x^* \| \le \| x^0 - x^* \|$ for all $k \geq 0$. Moreover, we have $\|w^0 - x^*\| = \| x^0 - x^*\|$. Thus, we can further bound the primal infeasibility as follows:
\begin{align}
\text{dist}_{\mathcal{K}}(G \hat{u}^k +g)    & \le
\frac{4L_\text{d}}{(k+1)^2} \|w^k - w^0\|
 \le \frac{4L_\text{d}}{(k+1)^2}(\|w^k - x^* \| + \|w^0 - x^*\|)  \nonumber\\
& \le \frac{8L_\text{d}}{(k+1)^2} \|x^0 - x^*\| = \frac{8L_\text{d}
\mathcal{R}_{\text{d}}}{(k+1)^2}.\label{infes_av}
\end{align}

\noindent Further, we derive sublinear  estimates
for primal suboptimality. First, note that:
\begin{align*}
\Delta(x,y^{k+1}) &=  d(y^{k+1}) + \langle \nabla d(y^{k+1}),  x-y^{k+1}\rangle -d(x) \\
&=\mathcal{L}(u^{k+1},y^{k+1}) + \langle g(u^{k+1}), x-y^{k+1}\rangle -d(x) \\
&=f(u^{k+1}) + \langle g(u^{k+1}), x\rangle -d(x) =
\mathcal{L}(u^{k+1},x) -d(x).
 \end{align*}
\noindent Summing on the history  and using  the convexity of
$\mathcal{L}(\cdot,x)$, we get:
\begin{align}
\sum\limits_{i=1}^{k+1}\theta_i \Delta(x,y^i)
&= \sum\limits_{i=1}^{k+1}\theta_i(\mathcal{L}(u^{i},x) -d(x))\nonumber\\
&\ge S_{k+1}^{\theta}\left(\mathcal{L}(\hat{u}^{k+1},x) -d(x)\right)
= \theta_{k+1}^2\left(\mathcal{L}(\hat{u}^{k+1},x) -d(x)\right).
\label{sum_theta_aux}
\end{align}
Using \eqref{sum_theta_aux} in \eqref{corr2_relation}, and dropping
the term $L_\text{d}/2\norm{w^{k+1}-x}^2$, we have:
\begin{equation}\label{}
f(\hat{u}^{k+1}) + \langle x, g(\hat{u}^{k+1})\rangle - d(x^{k+1})
\le \frac{L_\text{d}}{2\theta_{k+1}^2}\norm{x^0-x}^2 \quad \forall x
\in {\ca^*}.
\end{equation}

\noindent Taking  $x=0 \in {\ca^*}$ in the previous inequality, we get:
\begin{align*}
f(\hat{u}^{k+1}) - d(x^{k+1}) \leq
\frac{L_\text{d}}{2\theta_{k+1}^2}\norm{x^0}^2 \leq
\frac{2L_\text{d}}{(k+2)^2} \norm{x^0}^2.
\end{align*}
Taking in account that $d(x^{k}) \le f^*$, then we have:
\begin{equation}
\label{subopt_av_1} f(\hat{u}^{k}) - f^* \le
\frac{2L_\text{d}}{(k+1)^2} \norm{x^0}^2.
\end{equation}

\noindent On the other hand, we have:
\begin{align}
f^* &=  \min_{u \in U} f(u) + \langle x^*, g(u) \rangle \leq
f(\hat u^k) + \langle x^*, g(\hat u^k)  \rangle \nonumber\\
& \le f(\hat u^k) + \norm{x^*} \text{dist}_{\mathcal{K}} (G \hat u^k+ g)
\overset{\eqref{infes_av}}{\leq} f(\hat u^k) + \frac{8L_\text{d}
\mathcal{R}_{\text{d}}}{(k+1)^2}   \norm{x^*}. \label{subopt_av_2}
\end{align}
From \eqref{subopt_av_1} and \eqref{subopt_av_2} we obtain an
estimate on primal suboptimality:
\begin{equation}
\label{ps_dfg} |f(\hat{u}^{k}) - f^*| \le
\frac{8L_\text{d}}{(k+1)^2}\left[ \mathcal{R}_{\text{d}}^2 +
\max(\mathcal{R}_{\text{d}},\norm{x^0})^2 \right].
\end{equation}

\noindent Thus,  we have obtained sublinear estimates  of order
$\mathcal{O}(\frac{1}{k^2})$ for primal infeasibility (inequality
\eqref{infes_av}) and primal suboptimality (inequality
\eqref{ps_dfg}) for the average primal  sequence $(\hat{u}^k)_{k
\geq 0}$ generated by Algorithm \textbf{(DFG)}.   Now, if we want to
get an $\epsilon$-primal solution in $\hat{u}^{k}$ we need to
perform $k = {\mathcal O} (\frac{1}{\sqrt{\epsilon}})$ iterations.
\end{proof}

\noindent Based on \eqref{distance}, we can also characterize the
distance from $\hat u^k$ to the unique primal optimal solution
$u^*$. Using  \eqref{infes_av} and \eqref{subopt_av_1}, we get:
\[   \| \hat u^k - \bo{u}^*\|^2  \leq
\frac{1}{(k+1)^2} \left[ \frac{4 L_\text{d}}{\sigma_{\mathrm{f}}} \|x^0\|^2 + \frac{16 L_\text{d} \mathcal{R}_\text{d}}{\sigma_{\mathrm{f}}}
 (\mathcal{R}_\text{d} + \|x^0\|) \right]. \]


\noindent In Theorem \ref{th_dfglast_cone}  we obtained an
$\epsilon$-primal solution for the modified Algorithm (\textbf{DFG})
in the last primal iterate $v^{k}$ after $k =
\mathcal{O}(\frac{1}{\sqrt{\epsilon}} \log(\frac{1}{\epsilon}))$
iterations, which  is of the same order (up to a logarithmic term)
as for the primal average sequence  $\hat u^k$ from previous Theorem
\ref{th_dfgav}. Moreover, the reader should also notice that all our
previous  convergence estimates depend only on three  constants: the
Lipschitz constant $L_\text{d}$, the initial starting dual point
$x^0$ and its distance to the dual optimal  set denoted
$\mathcal{R}_\text{d}$. Moreover, if $x^0=0$, then $f(\hat u^k) -
f^* \leq 0$, i.e. the function values in  the primal average
sequences  are always below the optimal value for Algorithms
\textbf{(DG)} and \textbf{(DFG)}.


\section{Dual error bound property and linear convergence of dual first order methods}
\label{sec_lin}
In this section, we show that if the dual problem has an error bound type property we can get an $\epsilon$-primal solution for problem \eqref{eq_prob_princc}  with the previous dual first order methods in $k = \mathcal{O}(\log(\frac{1}{\epsilon}))$ iterations. Thus, in this section we assume that the dual problem of \eqref{eq_prob_princc} has an error bound property. More precisely, we assume that  for any $M>0$ there exists a constant  $\kappa > 0$ depending on $M$ and the data of problem \eqref{eq_prob_princc} such that the following
error bound property holds for the  corresponding dual problem of \eqref{eq_prob_princc}:
\begin{equation}
\label{eq_error_bound} \|x - \bar{x}\| \leq \kappa \|\nabla^+
d(x)\| ~~~ \forall x \in {\ca^*}, \; f^* - d(x) \leq M,
\end{equation}
where  $\bar{x}=\left[x\right]_{X^*}$ (i.e. the Euclidean projection
of $x$ onto the optimal dual set $X^*$)  and recall that  $\nabla^+ d(x)$ denotes the gradient map: $\nabla^+ d(x)= [x +  \frac{1}{L_\text{d}} \nabla
d(x)]_{+} - x$.

\begin{remark}
For example, if we consider  a linearly constrained convex  problem ($\ca = \rset^p_{-}$):
\begin{align}
\label{urn} \min_{u \in \rset^n} f(u): \quad \text{s.t.} \quad Gu +
g \leq 0,
\end{align}
where we assume that $f$  is $\sigma_\text{f}$-strongly convex
function and  has $L_{\text{f}}$-Lipschitz continuous
gradient, $U = \rset^n$ and $G \in \rset^{p \times n}$, then   in \cite{LuoTse:92a,NecNed:14a,WanLin:13} it has been
proved that the corresponding dual problem satisfies an error bound type property. Indeed,  for the convex function $f$, we denote its conjugate by \cite{RocWet:98}: $ \tilde{f}(y) =    \max \limits_{x \in \rset^{n}} \langle y, x \rangle - f(x)$. According to Proposition 12.60 in \cite{RocWet:98}, under the previous assumptions,  function $\tilde{f}(y)$ is strongly
convex w.r.t. Euclidean norm, with constant
$\sigma_{\tilde{\mathrm{f}}}= \frac{1}{L_{{\mathrm{f}}}}$ and has
Lipschitz continuous gradient with constant $L_{\tilde{\mathrm{f}}}=
\frac{1}{\sigma_{{\mathrm{f}}}}$. Note that in these settings our
dual function of \eqref{urn} can be written as: $d(x)=-\tilde{f}(-G^T x)-g^T x$.  Since $f$ is strongly convex, the dual gradient $\nabla d(x) =  G u(x) + g$ is Lipschitz continuous with constant $L_\text{d} = \frac{\|G\|^2}{\sigma_\text{f}}$
\cite{Nes:05}. Furthermore, if  $G$ has full row rank, then it
follows immediately that the dual function $d$ is strongly convex.
Therefore,  we consider  the nontrivial case when $G$ is rank
deficient. In \cite{LuoTse:92a,NecNed:14a,WanLin:13} it has been  proved that
for convex problem \eqref{urn} with
function $f$ being  $\sigma_{\text{f}}$-strongly convex and  having
$L_{\text{f}}$-Lipschitz gradient and $U = \rset^n$,  for any $M>0$ there exists a constant  $\kappa > 0$ depending
on $M$ and the data of problem \eqref{urn} such that an
error bound property of the form \eqref{eq_error_bound} holds for the  corresponding dual problem. \qed
\end{remark}

\noindent Next, we derive a strong convex like inequality that will be used in the sequel.
\begin{theorem}
Under Assumption \eqref{as_strong} and the error bound property \eqref{eq_error_bound} for the  corresponding dual  of convex  problem \eqref{eq_prob_princc} the following inequality holds:
\begin{align}
\label{slr}
f^*  - d(x) \geq \frac{L_\text{d}}{2 \kappa^2} \| x - \bar{x}\|^2   \quad \forall x \in {\ca^*}, \; f^* - d(x) \leq M.
\end{align}
\end{theorem}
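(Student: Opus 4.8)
The plan is to bound $f^*-d(x)$ from below by the squared norm of the gradient map and then substitute the error bound \eqref{eq_error_bound}. The intermediate target I would aim for is the inequality
\begin{equation}
\label{plan_ascent}
f^* - d(x) \geq \frac{L_\text{d}}{2}\|\nabla^+ d(x)\|^2 \qquad \forall x \in \ca^*,
\end{equation}
after which \eqref{slr} follows in one line: squaring the error bound \eqref{eq_error_bound} gives $\|\nabla^+ d(x)\|^2 \geq \frac{1}{\kappa^2}\|x - \bar{x}\|^2$ for every $x \in \ca^*$ with $f^*-d(x)\leq M$, and inserting this into \eqref{plan_ascent} yields exactly $f^*-d(x)\geq \frac{L_\text{d}}{2\kappa^2}\|x-\bar x\|^2$ on the required domain.

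To prove \eqref{plan_ascent}, I would recover the familiar ascent property of a gradient step, now at an arbitrary feasible point rather than along the iterates. Writing $x^+ = [x + \frac{1}{L_\text{d}}\nabla d(x)]_{\ca^*}$ as in \eqref{eq_gradient_map}, the variational characterization of the projection, tested against $z=x\in\ca^*$, gives $\langle x^+ - x - \frac{1}{L_\text{d}}\nabla d(x),\, x - x^+\rangle \geq 0$, which rearranges to $\langle \nabla d(x), x^+ - x\rangle \geq L_\text{d}\|x^+-x\|^2$. Feeding this into the descent lemma \eqref{eq_descent} (applied with the pair $x^+, x$) produces $d(x^+) \geq d(x) + \frac{L_\text{d}}{2}\|x^+-x\|^2 = d(x) + \frac{L_\text{d}}{2}\|\nabla^+ d(x)\|^2$; this is precisely relation \eqref{dg_pr2} read at a generic point. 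Since $x^+\in\ca^*$ and $f^*=\max_{x\in\ca^*} d(x)$, we have $d(x^+)\leq f^*$, and combining the two gives \eqref{plan_ascent}.

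I do not expect a serious obstacle here: once \eqref{plan_ascent} is in place the computation is immediate. The only points requiring care are (i) justifying the ascent estimate at a general point $x\in\ca^*$ --- which is a one-line consequence of the projection inequality and \eqref{eq_descent}, and does not rely on $x$ being generated by any algorithm --- and (ii) respecting the domain restriction: the error bound \eqref{eq_error_bound} is only assumed on $\{x\in\ca^* : f^*-d(x)\leq M\}$, but this is exactly the set on which \eqref{slr} is claimed, so the substitution in the first paragraph is legitimate throughout. Inequality \eqref{plan_ascent} itself holds on all of $\ca^*$ with no restriction, so the localization enters only through the error bound.
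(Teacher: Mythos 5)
Your proposal is correct and takes essentially the same route as the paper: both proofs establish the ascent property $d(x^+) \geq d(x) + \frac{L_\text{d}}{2}\|\nabla^+ d(x)\|^2$ by combining the optimality (variational) characterization of the projection defining $x^+$ with the descent lemma \eqref{eq_descent}, then use $d(x^+) \leq f^*$ and substitute the error bound \eqref{eq_error_bound}. The only difference is the order of the final two substitutions, which is immaterial.
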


\begin{proof}
Let us define $x^+ = [x + 1/L_\text{d} \nabla d(x)]_{\ca^*}$ so that $\nabla^+ d(x) = x^+ - x$.  Note that $x^+$ is the optimal solution of the following convex problem:
\begin{align}
\label{op_g} x^+ = \arg \min_{z \in {\ca^*}} d(x) + \langle \nabla d(x),
z-x \rangle - \frac{L_\text{d}}{2} \|z - x\|^2.
\end{align}
From \eqref{eq_descent} and  the optimality conditions of
\eqref{op_g} we get the following increase in terms of the objective
function $d$:
\begin{align}
\label{decrease_g} d(x^+) & \geq d(x) + \langle \nabla d(x), x^+ - x
\rangle - \frac{L_\text{d}}{2} \| x^+ - x\|^2 \geq d(x) + \frac{L_\text{d}}{2} \|
x^+ - x\|^2.
\end{align}
Combining  \eqref{eq_error_bound} and  \eqref{decrease_g} we obtain:
\begin{align*}
\| x - \bar{x}\|^2 \leq \frac{2 \kappa^2}{L_\text{d}} (d(x^+) - d(x)) \leq
\frac{2 \kappa^2}{L_\text{d}} (f^* - d(x))   \quad \forall x \in {\ca^*}, \; d(x) \geq f^* - M,
\end{align*}
which shows the statement of the theorem.
\end{proof}

\noindent Firstly, we consider Algorithm (\textbf{DG})). For simplicity, we assume
constant step size $\alpha_k = \frac{1}{L_\text{d}}$. Since
Algorithm (\textbf{DG}) is an ascent method according to
\eqref{dg_pr2}, we can take $M = f^* -d(x^0)$. Thus, the error bound
property \eqref{eq_error_bound} holds for the sequence $(x^k)_{k
\geq 0}$ generated by  Algorithm (\textbf{DG}), i.e. there exists
$\kappa > 0$ such that:
\begin{equation}
\label{global_error_bound} \|x^k - \bar{x}^k\| \leq \kappa
\|\nabla^+ d(x^k)\| = \kappa \|x^{k+1} - x^k\| \quad \forall k \geq
0,
\end{equation}
where  $\bar{x}^k  = \left[x^k\right]_{X^*}$.  The following theorem
provides an estimate on the dual suboptimality for Algorithm ({\bf
DG}) with constant step size.

\begin{theorem}
\label{theorem_dual_optim_dg} Under Assumption \eqref{as_strong} and the error bound property \eqref{eq_error_bound} for the  corresponding dual  of problem  \eqref{eq_prob_princc},  the sequence $\left(x^k\right)_{k\geq 0}$  generated
by Algorithm ({\bf DG}) converges linearly in terms of the distance to the dual optimal set $X^*$ and of the  dual objective function values:
\begin{equation}
\label{bound_dual_optim_dg}
\| x^k - \bar{x}^k \| \leq \left( \frac{\kappa}{\sqrt{1+\kappa^2}}\right)^{k} \mathcal{R}_\text{d} \quad
\text{and} \quad f^* - d({x}^{k}) \leq \frac{\Ld
\mathcal{R}^2_\text{d}}{2}
\left(\frac{\kappa^2}{1+\kappa^2}\right)^{k-1} \quad \forall k \geq 0.
\end{equation}
\end{theorem}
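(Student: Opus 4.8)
The plan is to track the squared distance of the iterates to the whole dual optimal set, $\rho_k^2 := \operatorname{dist}_{X^*}(x^k)^2 = \|x^k - \bar{x}^k\|^2$ with $\bar{x}^k = [x^k]_{X^*}$, and to show that it contracts by the factor $\frac{\kappa^2}{1+\kappa^2}$ at every iteration. The dual-suboptimality estimate will then drop out of the very same recursion, so both claims of the theorem follow from one descent inequality.

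First I would derive a one-step descent inequality for $\rho_k$. Starting from the basic projected-gradient estimate \eqref{dg_prr} of Lemma \ref{lemma1_dg}, I would insert $x = \bar{x}^k \in X^* \subseteq \ca^*$ and the constant step $\alpha_k = 1/L_\text{d}$. Using concavity of $d$ in the form $\langle \nabla d(x^k), \bar{x}^k - x^k\rangle \ge d(\bar{x}^k) - d(x^k) = f^* - d(x^k)$ to eliminate the cross term, and then the trivial bound $d(x^{k+1}) \le f^*$, the right-hand side collapses to
\[
\|x^{k+1} - \bar{x}^k\|^2 \le \rho_k^2 - \frac{2}{L_\text{d}}\bigl(f^* - d(x^{k+1})\bigr).
\]
Since $\bar{x}^k \in X^*$, the left-hand side dominates $\rho_{k+1}^2 = \operatorname{dist}_{X^*}(x^{k+1})^2$, giving the key recursion $\rho_{k+1}^2 \le \rho_k^2 - \frac{2}{L_\text{d}}(f^* - d(x^{k+1}))$.

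Next I would close this recursion using the error-bound consequence \eqref{slr}. Because Algorithm \textbf{(DG)} is an ascent method by \eqref{dg_pr2}, every iterate satisfies $f^* - d(x^k) \le f^* - d(x^0) =: M$, so \eqref{slr} is valid along the whole sequence; applied at $x^{k+1}$ it reads $f^* - d(x^{k+1}) \ge \frac{L_\text{d}}{2\kappa^2}\rho_{k+1}^2$. Substituting this into the recursion and rearranging gives $(1+\kappa^{-2})\rho_{k+1}^2 \le \rho_k^2$, i.e.
\[
\rho_{k+1}^2 \le \frac{\kappa^2}{1+\kappa^2}\,\rho_k^2 .
\]
Iterating from $\rho_0 = \|x^0 - \bar{x}^0\| = \mathcal{R}_\text{d}$ (by \eqref{rd}) and taking square roots yields the first claimed bound $\|x^k - \bar{x}^k\| \le \bigl(\kappa/\sqrt{1+\kappa^2}\bigr)^{k}\mathcal{R}_\text{d}$. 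For the dual-suboptimality estimate I would reuse the recursion directly: from it, $f^* - d(x^{k+1}) \le \frac{L_\text{d}}{2}(\rho_k^2 - \rho_{k+1}^2) \le \frac{L_\text{d}}{2}\rho_k^2$, and plugging in the geometric bound on $\rho_k^2$ and shifting the index by one produces $f^* - d(x^k) \le \frac{L_\text{d}\mathcal{R}_\text{d}^2}{2}\bigl(\kappa^2/(1+\kappa^2)\bigr)^{k-1}$ for $k \ge 1$, exactly as stated.

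The main obstacle is the first step. One must measure the distance to the entire optimal set $X^*$, reprojecting onto it at each iteration rather than comparing against a single fixed multiplier, and then combine the projected-gradient inequality \eqref{dg_prr} with concavity so that the cross term $\langle \nabla d(x^k), \bar{x}^k - x^k\rangle$ is controlled purely by $f^* - d(x^{k+1})$. It is precisely this careful bookkeeping that makes the contraction factor come out as $\kappa^2/(1+\kappa^2)$; a cruder argument that bounds $\rho_{k+1}^2$ only through the scalar quantity $f^* - d(x^{k+1})$ via \eqref{slr} at both endpoints loses a factor and yields the weaker ratio $\kappa^2$.
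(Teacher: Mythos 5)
Your proposal is correct and follows essentially the same route as the paper: the same one-step inequality \eqref{dg_prr} evaluated at $x=\bar{x}^k$, the same reprojection bound $\|x^{k+1}-\bar{x}^{k+1}\|\leq\|x^{k+1}-\bar{x}^k\|$, and the same use of the strong-convexity-like inequality \eqref{slr} to obtain the contraction factor $\kappa^2/(1+\kappa^2)$. The only (cosmetic) difference is the function-value estimate: the paper re-invokes the descent lemma and maximizes over $\ca^*$ to get $f^*-d(x^{k+1})\leq \frac{L_\text{d}}{2}\|x^k-\bar{x}^k\|^2$, whereas you read the same bound directly off the recursion already derived; both give the identical final estimate.
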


\begin{proof}
From \eqref{dg_prr} and  concavity of $d$, we get:
\[ \| x^{k+1} - x \|^2 \leq \| x^k - x \|^2 + \frac{2}{L_\text{d}}
 \left( d(x^{k+1}) - d(x) \right)  \qquad  \forall x \in {\ca^*}. \]  Taking now in the previous relations $x =\bar{x}^{k}$ and using
$\|x^{k+1} - \bar{x}^{k+1}\| \leq \|x^{k+1} - \bar{x}^k\|$ and the strong convex like inequality  \eqref{slr}, we get:
\[  \|x^{k+1} - \bar{x}^{k+1}\|^2 \leq \|x^{k} - \bar{x}^{k}\|^2 - \frac{1}{\kappa^2}  \|x^{k+1} - \bar{x}^{k+1}\|^2, \]
or equivalently
\begin{align}
\label{lin_convw}
\|x^{k+1} - \bar{x}^{k+1}\|  \leq
\frac{\kappa}{\sqrt{1+\kappa^2}} \|x^{k} -
\bar{x}^{k}\|.
\end{align}
\noindent Thus, we obtain  linear convergence rate in terms of distance to the optimal set $X^*$:
\begin{align}
\label{lin_convs} \|x^{k} - \bar{x}^{k}\|  \leq \left( \frac{\kappa}{\sqrt{1+\kappa^2}}\right)^{k} \|x^{0} - \bar{x}^{0}\| = \left( \frac{\kappa}{\sqrt{1+\kappa^2}}\right)^{k} \mathcal{R}_\text{d}.
\end{align}

\noindent We can also derive  linear convergence  in terms of dual
function values:
\begin{align*}
d(x^{k+1}) & \overset{\eqref{eq_descent}}{\geq}  d(x^k) + \langle \nabla d(x^k),
x^{k+1} - x^k\rangle - \frac{L_\text{d}}{2} \|x^{k+1} - x^k\|^2 \\
& =  \max_{x \in {\ca^*}} \; d(x^k) + \langle \nabla d(x^k), x - x^k\rangle
- \frac{L_\text{d}}{2} \|x - x^k\|^2 \\
& \geq  \max_{x \in {\ca^*}} \; d(x) - \frac{L_\text{d}}{2} \|x -x^k\|^2
\geq d(\bar{x}^k)  - \frac{L_\text{d}}{2} \|x^k - \bar{x}^k\|^2 \\
& \overset{\eqref{lin_convs}}{\geq} f^* - \frac{L_\text{d}\mathcal{R}_\text{d}^2}{2}
\left( \frac{\kappa}{\sqrt{1+\kappa^2}}\right)^{2k}.
\end{align*}
\end{proof}

\noindent Note that our proof from Theorem
\ref{theorem_dual_optim_dg} is different from Tseng's proof
\cite{Tse:10} for linear convergence of gradient method under an
error bound property. More precisely, in our proof we make use
explicitly of the strong convex like inequality \eqref{slr} which
allows us to get for $\|x^k - \bar x^k\|$ better  convergence rate
than in \cite{Tse:10}.

\noindent We now derive linear estimates for primal infeasibility and
primal suboptimality for the last iterate sequence $(u^k)_{k \geq
0}$ generated by our Algorithm ({\bf DG}) with constant step size
$\alpha_k = \frac{1}{L_\text{d}}$. For simplicity of the exposition
let us denote:
\[   c_1 = \frac{\Ld \mathcal{R}_\text{d}^2}{2} \quad \text{and}
\quad \theta=\frac{\kappa^2}{1+\kappa^2}. \]
Clearly, $\theta < 1$.  From Theorem \eqref{theorem_dual_optim_dg}
we obtain:
\begin{equation}
\label{conv_lineb} f^* - d({x}^{k}) \leq c_1 \theta^{k-1}.
\end{equation}

\begin{theorem}
\label{th_urndglast} Under the assumptions of Theorem \ref{theorem_dual_optim_dg},
let the sequences $\left(x^k, u^k\right)_{k\geq 0}$ be generated by
Algorithm {\bf (DG)}. Then, for a given accuracy $\epsilon>0$ we get
an $\epsilon$-primal solution for \eqref{eq_prob_princc} in the last primal
iterate $u^{k}$ of Algorithm \textbf{(DG)} after $k =
\mathcal{O}(\log(\frac{1}{\epsilon}))$ iterations.
\end{theorem}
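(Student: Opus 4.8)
The plan is to reproduce the structure of the proof of Theorem \ref{th_dglast}, substituting the \emph{linear} dual suboptimality estimate \eqref{conv_lineb} for the sublinear one used there. Recall that for Algorithm \textbf{(DG)} we have $u^k = u(x^k)$, so the preliminary bounds of Lemma \ref{lemma_sc} relating $u(x)$ to the dual function apply directly at $x = x^k$.

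First I would combine the strong-convexity inequality \eqref{ineq_x} with the linear dual estimate \eqref{conv_lineb} to bound the distance to the unique primal optimum:
\[
\frac{\sigma_\text{f}}{2}\|u^k - u^*\|^2 \leq f^* - d(x^k) \leq c_1 \theta^{k-1},
\]
hence $\|u^k - u^*\| \leq \sqrt{2 c_1/\sigma_\text{f}}\,\theta^{(k-1)/2}$. The crucial point is that this distance decays geometrically with rate $\sqrt{\theta} < 1$. The primal infeasibility then follows at once from \eqref{ineq_feas2}, since $\text{dist}_\ca(G u^k + g) \leq \|G\|\,\|u^k - u^*\|$ inherits the same geometric decay.

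For primal suboptimality I would apply \eqref{ineq_opt}; the only extra ingredient is a uniform bound on the factor $\|x^k - x^*\|$, which is supplied by Lemma \ref{lemma1_dg} (with $x^* = [x^0]_{X^*}$, giving $\|x^k - x^*\| \leq \mathcal{R}_\text{d}$). This yields $|f(u^k) - f^*| \leq \|G\|(\mathcal{R}_\text{d} + \|x^*\|)\|u^k - u^*\|$, again geometrically small. Finally, requiring all three estimates to drop below $\mathcal{O}(\epsilon)$ and solving the inequality $\theta^{(k-1)/2} \leq \mathcal{O}(\epsilon)$ for $k$ gives $k = \mathcal{O}(\log(1/\epsilon))$. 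Since every supporting inequality is already in place, there is no deep obstacle here: the genuine content lies in the linear dual convergence of Theorem \ref{theorem_dual_optim_dg}, and the present statement is essentially its assembly into primal guarantees. The only care needed is to confirm that the suboptimality prefactor $\|x^k - x^*\|$ stays bounded so that the full bound remains purely geometric, and then to take the worst of the three rates when reading off the iteration count.
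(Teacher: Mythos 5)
Your proposal is correct and follows essentially the same route as the paper: combine \eqref{ineq_x} with the linear dual estimate \eqref{conv_lineb} to get geometric decay of $\|u^k - u^*\|$, then feed this into \eqref{ineq_feas2} for infeasibility and into \eqref{ineq_opt} together with the bound $\|x^k - x^*\| \leq \mathcal{R}_\text{d}$ from Lemma \ref{lemma1_dg} for suboptimality, yielding $k = \mathcal{O}(\log(1/\epsilon))$. The only cosmetic difference is that the paper additionally rewrites $\|x^*\| \leq \mathcal{R}_\text{d} + \|x^0\|$ so that the final prefactor reads $(2\mathcal{R}_\text{d} + \|x^0\|) L_\text{d} \mathcal{R}_\text{d}$, which changes nothing substantive.
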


\begin{proof}
Combining \eqref{ineq_x} and \eqref{conv_lineb} we obtain the
following  relation:
\begin{align}
\label{dist_sublineb} \| u^k - u^* \| \leq
\sqrt{\frac{2c_1}{\sigma_\text{f}}} \theta^{\frac{k-1}{2}} = \sqrt{\frac{\Ld \mathcal{R}_\text{d}^2}{\sigma_\text{f}}} \theta^{\frac{k-1}{2}}.
\end{align}

\noindent Then, combining the previous relation
\eqref{dist_sublineb} and \eqref{ineq_feas2} we obtain a linear
estimate for feasibility violation of the last iterate $u^k$:
\begin{align}
\label{fes_sublineb} \text{dist}_{\ca}(G u^k + g) & \leq   \|G\|  \| \bo{u}^k - u^* \|
  \leq  \|G\| \sqrt{\frac{2c_1}{\sigma_\text{f}}}
\theta^{\frac{k-1}{2}}    \leq  L_\text{d} \mathcal{R}_\text{d}
\theta^{\frac{k-1}{2}},
\end{align}
where we used the definitions of  $L_\text{d} =
\|G\|^2/\sigma_\text{f}$ and $c_1$. Finally, we derive linear
estimates for primal suboptimality of the last iterate $u^k$.
Combining \eqref{dist_sublineb} and \eqref{ineq_opt} we obtain:
\begin{align}
\label{opt_sublineb} |f(u^k) - f^*| & \leq   ( \| x^k - x^*\| +
\|x^*\|) \|G\|   \sqrt{\frac{2c_1}{\sigma_\text{f}}} \theta^{\frac{k-1}{2}} \nonumber \\
&  \leq  (2  \mathcal{R}_\text{d} + \|x^0\|) L_\text{d}
\mathcal{R}_\text{d} \theta^{\frac{k-1}{2}}.
\end{align}
In conclusion, we have obtained linear estimates  of order
$\mathcal{O}(\theta^k)$, with $\theta<1$, for primal infeasibility
(inequality \eqref{fes_sublineb}) and  suboptimality (inequality
\eqref{opt_sublineb}) for the last iterate  sequence $(u^k)_{k \geq
0}$ generated by  Algorithm \textbf{(DG)}. Now, if we want to get an
$\epsilon$-primal solution in $u^{k}$ we need to perform $k =
\mathcal{O}(\log(\frac{1}{\epsilon}))$ iterations.
\end{proof}

\noindent Secondly, we  show that under Assumption \eqref{as_strong}
and the error bound property \eqref{eq_error_bound} for the
corresponding dual  of problem  \eqref{eq_prob_princc}, a restarting
version of Algorithm {\bf (DFG)} has linear convergence. Similar to
Algorithm (\textbf{DG}), we can also take in this  case  $M = f^*
-d(x^0)$ and thus the error bound property \eqref{eq_error_bound}
holds for the sequence $(x^k)_{k \geq 0}$ generated by  a restarting
version of Algorithm (\textbf{DFG}). Indeed, combining
\eqref{bound_dual_optim_dfg} and \eqref{slr} we get:
\[ f^* - d(x^k)   \leq \frac{2 L_\text{d}}{(k+1)^2} \| x^0 - \bar{x}^0\|^2 \leq
 \frac{4 \kappa^2}{(k+1)^2} (f^*  - f(x^0)) = c^2 (f^* - f(x^0)), \]
where we choose a positive constant $c \in (0, \; 1)$ such that
\[  c = \frac{2 \kappa }{k+1}. \]
Then, for fixed $c$, the number of iterations $K_c$ that we need to
perform in order to obtain $f^*  - d(x^{K_c})  \leq c^2 ( f^* - d(x^0)) $
is given by:
\[ K_c = \left  \lfloor \frac{2 \kappa}{c} \; \right \rfloor.  \]
Note that if the optimal value $f^*$ is known in advance, then we just need to
restart  Algorithm {\bf (R-DFG)} at iteration $K_c^* \leq
K_c$ when the following condition holds:
\[   f^* - d(x^{K_c^*,j})  \leq c^2 (f^* - d(x^{0,j})),  \]
which can be practically   verified. After each $K_c$ steps of Algorithm {\bf (DFG)} we
restart it obtaining the following scheme:
\begin{center}
\framebox{
\parbox{10.5cm}{
\begin{center}
\textbf{ Algorithm {\bf (R-DFG)} }
\end{center}
{Given $x^{0,0} = y^{1,0} \in {\ca^*}$ and restart interval $K_c$. For $j
\geq 0$ do:}
\begin{enumerate}
\item Run Algorithm (DFG) for $K_c$ iterations to get ${x}^{K_c,j}$
\item Restart: $x^{0,j+1} = x^{K_c,j}$, \; $y^{1,j+1} = x^{K_c,j}$ \; and \;
$\theta_1=1$.
\end{enumerate}
}}
\end{center}
Then, after $p$ restarts of Algorithm {\bf (R-DFG)} we obtain linear
convergence in terms of dual suboptimality:

\begin{theorem}
\label{th_ebdualsuboptdfg} Under Assumption \eqref{as_strong} and
the error  bound property \eqref{eq_error_bound} for the
corresponding dual  of problem  \eqref{eq_prob_princc},  the
sequence $\left(x^{k,j},y^{k,j} \right)_{k,j\geq 0}$  generated by
Algorithm ({\bf R-DFG}) converges linearly in terms of the  dual
objective function values, i.e.:
\begin{equation}
\label{bound_dual_optim_dfgeb}
 f^* - d({x}^{K_c,p-1}) \leq \epsilon \quad \text{for} \quad k = p K_c =
 e  \kappa  \log \frac{L_\text{d} \mathcal{R}_\text{d}^2}{\epsilon} \quad \text{iterations}.
\end{equation}
\end{theorem}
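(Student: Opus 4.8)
The plan is to track only the dual suboptimality at the restart points and to establish a geometric per-block contraction, then unroll it and optimize the restart length $K_c$. Write $D_j = f^* - d(x^{0,j})$ for the suboptimality at the start of the $j$-th outer iteration, so that $x^{0,j+1} = x^{K_c,j}$ gives $D_{j+1} = f^* - d(x^{K_c,j})$. Running Algorithm \textbf{(DFG)} afresh on block $j$ (with $y^{1,j} = x^{0,j}$ and $\theta_1 = 1$) and combining its rate \eqref{bound_dual_optim_dfg} with the strong-convex-like inequality \eqref{slr} applied at $x^{0,j}$ yields, exactly as in the computation preceding the theorem,
\[
D_{j+1} \le \frac{2L_\text{d}}{(K_c+1)^2}\,\|x^{0,j} - \bar{x}^{0,j}\|^2 \le \frac{4\kappa^2}{(K_c+1)^2}\,D_j = c^2 D_j, \qquad c = \frac{2\kappa}{K_c+1}.
\]

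First I would justify that \eqref{slr} (hence the error bound \eqref{eq_error_bound}) is legitimately invoked at every restart point. That inequality holds on the sublevel set $\{x \in \ca^* : f^* - d(x) \le M\}$; fixing $M = D_0 = f^* - d(x^{0,0})$ and noting that $c < 1$, the recursion $D_{j+1} \le c^2 D_j \le D_j$ shows by induction that $D_j \le M$ for all $j$, so a single constant $\kappa$ applies throughout and the contraction is valid for every block. This sublevel-set invariance is the one structural point that must be checked rather than computed.

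Unrolling gives $D_p \le c^{2p} D_0$, and since $x^{K_c,p-1} = x^{0,p}$ we have $f^* - d(x^{K_c,p-1}) = D_p$. To control $D_0$ I would evaluate the fast-gradient rate \eqref{bound_dual_optim_dfg} at $k=0$, obtaining $D_0 \le 2L_\text{d}\mathcal{R}_\text{d}^2$ (recall $\|x^{0,0} - \bar{x}^{0,0}\| = \mathcal{R}_\text{d}$). Hence $f^* - d(x^{K_c,p-1}) \le 2L_\text{d}\mathcal{R}_\text{d}^2\, c^{2p}$, and requiring this to be at most $\epsilon$ forces
\[
p \ge \frac{1}{2\log(1/c)}\,\log\!\left(\frac{2L_\text{d}\mathcal{R}_\text{d}^2}{\epsilon}\right).
\]

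The total number of inner iterations is $k = pK_c$, and with $K_c = \lfloor 2\kappa/c\rfloor \approx 2\kappa/c$ this is of order $\frac{\kappa}{c\log(1/c)}\log\frac{L_\text{d}\mathcal{R}_\text{d}^2}{\epsilon}$. The main point, and the only genuinely delicate step, is the choice of the free parameter $c$: minimizing the prefactor $\frac{1}{c\log(1/c)}$ over $c \in (0,1)$ (equivalently minimizing $t/\log t$ with $t = 1/c$) gives the minimizer $c = 1/e$, at which the prefactor equals $e$. Substituting $c = 1/e$ yields $k = pK_c = \mathcal{O}\!\left(e\kappa\log\frac{L_\text{d}\mathcal{R}_\text{d}^2}{\epsilon}\right)$, the claimed complexity. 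What remains is pure bookkeeping: absorbing the floor in $K_c$ and the factor $2$ in $D_0$ into the $\mathcal{O}(\cdot)$ and the logarithm, neither of which disturbs the leading $e\kappa$ factor.
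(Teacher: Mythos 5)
Your overall strategy is exactly the paper's: the per-block contraction $D_{j+1}\le c^2 D_j$ with $c=2\kappa/(K_c+1)$, obtained by chaining the fast-gradient rate \eqref{bound_dual_optim_dfg} with the strong-convex-like inequality \eqref{slr} at the restart point, then unrolling over the $p$ restarts and taking $c=1/e$. Two of your additions are genuine improvements in rigor over the paper's terse write-up: the induction showing that every restart point stays in the sublevel set $\{x\in\ca^*: f^*-d(x)\le M\}$ with $M=D_0$ (so a single constant $\kappa$ is legitimate throughout), and the verification that $c=1/e$ actually minimizes the prefactor $1/(c\log(1/c))$ --- the paper simply asserts this choice.

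There is, however, one step that fails: bounding $D_0$ by ``evaluating \eqref{bound_dual_optim_dfg} at $k=0$.'' That rate is a statement about the iterates the algorithm produces; it is derived from Theorem \ref{corr2}, which controls $d(x)-d(x^{k+1})$ for $k\ge 0$, i.e.\ it is valid only for $x^k$ with $k\ge 1$. At $k=0$ it would assert that the \emph{arbitrary} starting point satisfies $f^*-d(x^0)\le 2L_\text{d}\mathcal{R}_\text{d}^2$, which is false whenever the dual maximizer lies on the boundary of $\ca^*$ with $\nabla d(\bar{x}^0)\neq 0$. Concretely, inside the paper's setting take $f(u)=\tfrac{1}{2}u^2$ with constraint $u+1\ge 0$ (so $\ca=\ca^*=\rset_+$, $L_\text{d}=1$): then $d(x)=-\tfrac{1}{2}x^2-x$, $X^*=\{0\}$, $f^*=0$, and for $x^0=\delta$ small one has $f^*-d(x^0)=\delta+\delta^2/2$ while $2L_\text{d}\mathcal{R}_\text{d}^2=2\delta^2$, so the claimed bound fails for all $\delta<2/3$. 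The paper avoids this with a different device: since $x^{0,0}=y^{1,0}$, the first iterate $x^{1,0}$ is a projected gradient step from $x^{0,0}$, and the one-step projected-gradient inequality gives $f^*-d(x^{1,0})\le \tfrac{L_\text{d}}{2}\mathcal{R}_\text{d}^2$; it then re-indexes so the recursion starts from this point. The cleanest repair of your argument is to not bound $D_0$ at all: for the first block apply \eqref{bound_dual_optim_dfg} at $k=K_c\ge 1$ (no error bound needed there), obtaining $D_1\le \frac{2L_\text{d}\mathcal{R}_\text{d}^2}{(K_c+1)^2}\le \tfrac{L_\text{d}}{2}\mathcal{R}_\text{d}^2$, and unroll $D_p\le c^{2(p-1)}D_1$ from there; this leaves your final count $k=pK_c=\mathcal{O}\bigl(e\kappa\log(L_\text{d}\mathcal{R}_\text{d}^2/\epsilon)\bigr)$ untouched.
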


\begin{proof}  After $p$ restarts of Algorithm {\bf (R-DFG)} we have:
\begin{align*}
f^* - d(x^{0,p})  & = f^* - d(x^{K_c,p-1})  \leq  \frac{2 L_\text{d} \|
x^{0,p-1} - \bar{x}^{0,p-1}\|^2}{(K_c+1)^2} \\
& \leq c^2 (f^* - d(x^{0,p-1})) \leq \cdots \leq c^{2p} (f^* - d(x^{0,0})).
\end{align*}
Thus, the total number of iterations is $p K_c$. Since $x^{0,0}=y^{1,0}$ it
follows that $x^{1,0}$ is the gradient step from $x^{0,0}$ and thus
$f^* - d(x^{1,0})  \leq \frac{L_\text{d}}{2} \|x^{0,0} -
\bar{x}^{0,0}\|^2$.  Therefore, we may assume for simplicity  that $f^* - d(x^{0,0})  \leq \frac{L_\text{d}}{2} \|x^{0,0} - \bar{x}^{0,0}\|^2$.  For $c = \frac{1}{e}$  we have:
\begin{align*}
& f^* - d(x^{K_c,p-1}) \leq c^{2p} (f^* - d(x^{0,0})) \leq  \frac{1}{e^{2p}} \frac{L_\text{d} \mathcal{R}_\text{d}^2}{2}  \leq    \epsilon,
\end{align*}
provided that we perform  $k= e  \kappa  \log
\frac{L_\text{d} \mathcal{R}_\text{d}^2}{\epsilon}$ number of iterations.
\end{proof}

\noindent Next theorem shows linear convergence in terms of primal suboptimality and infeasibility of the last primal iterate $v^k$ generated by Algorithm \textbf{(R-DFG)}.

\begin{theorem}
\label{th_urndfglast} Under the assumptions of Theorem \ref{th_ebdualsuboptdfg}, we get
an $\epsilon$-primal solution for \eqref{eq_prob_princc} in the last primal
iterate $v^{k} = u(x^{K_c,p-1})$ of Algorithm \textbf{(R-DFG)} after $k = p K_c =
\mathcal{O}(\log(\frac{1}{\epsilon}))$ iterations.
\end{theorem}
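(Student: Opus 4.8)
The plan is to follow the three-step template from the proof of Theorem~\ref{th_urndglast}, now feeding it the geometric dual suboptimality estimate of Theorem~\ref{th_ebdualsuboptdfg}. First I would record that after $p$ restarts the dual gap decays geometrically, $f^* - d(x^{K_c,p-1}) \le c^{2p}(f^*-d(x^{0,0})) \le e^{-2p}\,\frac{L_\text{d}\mathcal{R}_\text{d}^2}{2}$ for the choice $c = 1/e$. Substituting this into \eqref{ineq_x} at $x = x^{K_c,p-1}$, and recalling $v^k = u(x^{K_c,p-1})$, immediately yields a linearly contracting distance to the primal optimum:
\[
\|v^k - u^*\| \le \sqrt{\frac{2\,(f^* - d(x^{K_c,p-1}))}{\sigma_\text{f}}} \le \sqrt{\frac{L_\text{d}\mathcal{R}_\text{d}^2}{\sigma_\text{f}}}\; e^{-p}.
\]

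The feasibility estimate then follows at once from \eqref{ineq_feas2}, giving $\text{dist}_\ca(G v^k + g) \le \|G\|\,\|v^k - u^*\| = \mathcal{O}(e^{-p})$. For primal suboptimality I would apply \eqref{ineq_opt}, whose right-hand side involves $\|x^{K_c,p-1} - x^*\|$ and therefore requires a uniform bound on the dual iterates. Here I would exploit the non-expansiveness \eqref{dfg_pr1} of Algorithm~{\bf (DFG)}: each restart block is a fresh DFG run (with $\theta_1 = 1$ and $y^{1,j+1} = x^{K_c,j}$), so within block $j$ we have $\|x^{K_c,j} - x^*\| \le \|x^{0,j} - x^*\|$ for any fixed $x^* \in X^*$, and since $x^{0,j+1} = x^{K_c,j}$ a telescoping induction across blocks gives $\|x^{K_c,p-1} - x^*\| \le \|x^{0,0} - x^*\|$. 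Choosing $x^* = [x^{0,0}]_{X^*}$, so that $\|x^{0,0} - x^*\| = \mathcal{R}_\text{d}$, inequality \eqref{ineq_opt} yields
\[
|f(v^k) - f^*| \le \|G\|\,(\mathcal{R}_\text{d} + \|x^*\|)\,\|v^k - u^*\| = \mathcal{O}(e^{-p}).
\]

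I expect the main obstacle to be precisely this cross-restart boundedness of the dual iterates: inequality \eqref{dfg_pr1} was established for a single uninterrupted run of {\bf (DFG)}, so the argument hinges on verifying that the restart step preserves the block-wise contraction $\|x^{K_c,j} - x^*\| \le \|x^{0,j} - x^*\|$, which is what keeps the multiplicative factor in the suboptimality bound uniformly bounded by $\mathcal{R}_\text{d} + \|x^*\|$ independently of $p$. Once all three primal quantities (distance to $u^*$, infeasibility, and the two-sided suboptimality gap) are shown to decay like $\mathcal{O}(e^{-p}) = \mathcal{O}(\theta^{k/K_c})$ with $K_c$ a fixed constant and $\theta = c^2 < 1$, I would close the argument by forcing each right-hand side below $\mathcal{O}(\epsilon)$; this requires $e^{-p} = \mathcal{O}(\epsilon)$, i.e.\ $p = \mathcal{O}(\log(1/\epsilon))$, whence the total iteration count is $k = p K_c = \mathcal{O}(\log(1/\epsilon))$, as claimed.
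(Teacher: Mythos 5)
Your proposal is correct and follows essentially the same route as the paper's own proof: the geometric dual decay from Theorem \ref{th_ebdualsuboptdfg} is fed into \eqref{ineq_x} to control $\|v^k - u^*\|$, then \eqref{ineq_feas2} gives feasibility and \eqref{ineq_opt} together with the boundedness \eqref{dfg_pr1} gives the two-sided suboptimality bound, yielding $k = pK_c = \mathcal{O}(\log(1/\epsilon))$. In fact you are more explicit than the paper on the one delicate point, namely propagating \eqref{dfg_pr1} across restart blocks via the telescoping induction $\|x^{K_c,j} - x^*\| \leq \|x^{0,j} - x^*\|$ combined with $x^{0,j+1} = x^{K_c,j}$ for a fixed $x^* \in X^*$, a step the paper invokes implicitly without comment.
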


\begin{proof}
Combining \eqref{ineq_x} and \eqref{bound_dual_optim_dfgeb} we obtain the
following  relation:
\begin{align}
\label{dist_sublinebrdfg} \| v^k - u^* \| \leq  \frac{1}{e^p}  \sqrt{\frac{L_\text{d} \mathcal{R}_\text{d}^2}{\sigma_\text{f}}}.
\end{align}

\noindent Then, combining the previous relation
\eqref{dist_sublinebrdfg} and \eqref{ineq_feas2} we obtain a linear
estimate for feasibility violation of the last iterate $v^k$:
\begin{align}
\label{fes_sublinebdfg} \text{dist}_{\ca}(G v^k + g) & \leq   \|G\|   \| \bo{v}^k -
u^* \|  \leq  \frac{L_\text{d} \mathcal{R}_\text{d}}{e^p},
\end{align}
where we used the definition of  $L_\text{d} =
\|G\|^2/\sigma_\text{f}$. Finally, we derive linear estimates for
primal suboptimality of the last iterate $u^k$. Combining
\eqref{dist_sublinebrdfg} with \eqref{dfg_pr1}  we get:
\begin{align}
\label{opt_sublinebdfg}
|f(v^k) - f^*| & \leq \norm{G}(2\mathcal{R}_{\text{d}} + \norm{x^0}) \| v^k - u^* \| \leq  \frac{\norm{G}(2\mathcal{R}_{\text{d}} + \norm{x^0})}{e^p}  \sqrt{\frac{L_\text{d} \mathcal{R}_\text{d}^2}{\sigma_\text{f}}}  \nonumber \\
& = \frac{L_\text{d} \mathcal{R}_\text{d} (2\mathcal{R}_{\text{d}} + \norm{x^0})}{e^p} .
\end{align}
In conclusion, we get an
$\epsilon$-primal solution in the last primal iterate $v^{k}$ provided that we perform $k = p K_c = e  \kappa  \log \frac{L_\text{d} \mathcal{R}_\text{d}^2}{\epsilon}$  iterations of Algorithm \textbf{(R-DFG)}.
\end{proof}

\noindent From our knowledge, the results stated in  Theorems
\ref{th_ebdualsuboptdfg} and \ref{th_urndfglast} answer for the
first time to a question posed by Tseng \cite{Tse:10} related to
whether there exist fast gradient schemes  that converge linearly
on convex problems having an error bound property.


\section{Better convergence rates  for dual
first order methods in the last primal iterate for
linearly constrained convex problems}
\label{extensions}

In this section we prove that for linearly constrained convex
problems ($\ca = \{0\}$) we can get better iteration complexity estimates for dual first order methods  corresponding to the last primal iterate sequence.  More precisely, we prove
that we can improve substantially the convergence rate of  dual
first order methods (\textbf{DG}) and (\textbf{DFG}))  in the last
iterate when the optimization problem \eqref{eq_prob_princc} has
linear equality constraints:  i.e.  $Gu + g =0$ instead of $Gu +g \in \ca$. Therefore, in this section we consider a particular case for the  optimization
problem \eqref{eq_prob_princc}, namely  a linearly constrained
convex optimization problem of the form:
\begin{align}
\label{ling} \min_{u \in U} f(u): \quad \text{s.t.} \quad  Gu + g
=0.
\end{align}
For \eqref{ling} we still require Assumption \eqref{as_strong} to hold: i.e. $f$ is $\sigma_\text{f}$-strongly convex function, $U$
a simple convex set and there exists a finite optimal Lagrange multiplier $x^*$.
Since $f$ is strongly convex and $\ca = \{0\}$, the dual gradient $\nabla d(x) =  G u(x) + g$ is Lipschitz continuous with constant $L_\text{d} =
\frac{\|G\|^2}{\sigma_\text{f}}$  (see e.g. \cite{Nes:05}). We
analyze below the convergence behavior of dual first order methods
for solving the linearly constrained  convex problem
\eqref{ling}. Note that since we have linear constraints in
\eqref{ling}, i.e.  ${\ca} = \{0\}$, the  corresponding  dual problem is unconstrained, i.e. ${\ca^*} = \rset^p$.

\vspace{0.2cm}

\noindent \textbf{Case 1}:  We first consider  applying  $2k$ steps of  Algorithm  (\textbf{DG}).  For simplicity,   let us assume constant step size $\alpha_k =
1/L_\text{d}$ for solving the corresponding  dual of  problem
\eqref{ling}.

\begin{theorem}
\label{th_lingdglast} For problem \eqref{ling} let $f$  be
$\sigma_\text{f}$-strongly convex function, $U$   be simple convex set
and the set of optimal multipliers $X^*$ be nonempty. Further,  let the
sequences $\left(x^k,u^k\right)_{k\geq 0}$ be generated by Algorithm
{\bf (DG)} with $\alpha_k = 1/L_\text{d}$. Then, for a given
accuracy $\epsilon>0$ we get an $\epsilon$-primal solution for
\eqref{ling} in the last primal  iterate $u^{2k}$ of Algorithm
\textbf{(DG)} after $2k = {\mathcal O} (\frac{1}{\epsilon})$
iterations.
\end{theorem}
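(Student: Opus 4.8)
The plan is to exploit that for linear equality constraints $\ca=\{0\}$ the dual cone is $\ca^*=\rset^p$, so the projection in the gradient map is the identity and $\nabla^+ d(x)=\frac{1}{L_\text{d}}\nabla d(x)$. Consequently the feasibility violation of $u^k=u(x^k)$ equals $\text{dist}_\ca(Gu^k+g)=\|Gu^k+g\|=\|\nabla d(x^k)\|=L_\text{d}\|\nabla^+ d(x^k)\|$, and by the chain behind \eqref{ineq_feas3} the very same quantity $\|Gu^k+g\|$ will control primal suboptimality. Everything therefore reduces to bounding the gradient map at the last iterate, and the crucial step I would prove is $\|\nabla^+ d(x^{2k})\|\leq\mathcal{O}(1/k)$ --- one order better than the $\mathcal{O}(1/\sqrt{k})$ one gets from \eqref{ineq_x} alone. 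This sharper bound is exactly what forces the algorithm to be run for $2k$ steps rather than $k$.

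To obtain it I would combine the ascent inequality \eqref{dg_pr2}, which for $\alpha_k=1/L_\text{d}$ reads $d(x^{j+1})-d(x^j)\geq\frac{L_\text{d}}{2}\|\nabla^+ d(x^j)\|^2$, the monotone decrease \eqref{decrease_gm} of the gradient-map norm, and the dual suboptimality estimate \eqref{bound_dual_optim_adg} (with $L_G=L_\text{d}$ for the constant step size). Summing the ascent inequality over $j=k,\dots,2k-1$, using $d(x^{2k})\leq f^*$ and that each of the $k$ summands dominates the last one, I would arrive at
\begin{align*}
\frac{L_\text{d} k}{2}\|\nabla^+ d(x^{2k})\|^2 & \leq \sum_{j=k}^{2k-1}\frac{L_\text{d}}{2}\|\nabla^+ d(x^j)\|^2 \\
& \leq d(x^{2k})-d(x^k) \leq f^*-d(x^k) \leq \frac{4 L_\text{d}\mathcal{R}_\text{d}^2}{k}.
\end{align*}
This yields $\|\nabla^+ d(x^{2k})\|\leq 2\sqrt{2}\,\mathcal{R}_\text{d}/k$, and hence via \eqref{map_feas} the feasibility estimate $\text{dist}_\ca(Gu^{2k}+g)\leq 2\sqrt{2}\,L_\text{d}\mathcal{R}_\text{d}/k=\mathcal{O}(1/k)$.

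For primal suboptimality the decisive observation is that with $\ca=\{0\}$ the optimal multiplier satisfies $\nabla d(x^*)=0$, since $u^*=u(x^*)$ is feasible, i.e. $Gu^*+g=0$, so that $g(u^*)=0$. Inserting $\nabla d(x^*)=0$ into the chains \eqref{ineq_optl} and \eqref{ineq_feas3} collapses $\|g(u^*)-g(u(x))\|$ to the feasibility violation $\|Gu^{2k}+g\|$, giving the two-sided bound
\[ -\|x^*\|\,\|Gu^{2k}+g\| \leq f(u^{2k})-f^* \leq \|x^{2k}\|\,\|Gu^{2k}+g\|. \]
Bounding $\|x^{2k}\|\leq 2\mathcal{R}_\text{d}+\|x^0\|$ and $\|x^*\|\leq\mathcal{R}_\text{d}+\|x^0\|$ through $\|x^k-x^*\|\leq\mathcal{R}_\text{d}$ from \eqref{dg_pr1}, both sides become $\mathcal{O}(1/k)$. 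Together with the feasibility estimate this shows that $u^{2k}$ is an $\epsilon$-primal solution once $k=\mathcal{O}(1/\epsilon)$, i.e. after $2k=\mathcal{O}(1/\epsilon)$ iterations.

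I expect the gradient-map bound at the last iterate to be the main obstacle. A direct appeal to \eqref{ineq_x} yields only $\|u^{2k}-u^*\|=\mathcal{O}(1/\sqrt{k})$, which through \eqref{ineq_feas2} and \eqref{ineq_opt} merely reproduces the weaker rate of Theorem \ref{th_dglast}; the full order-of-magnitude gain hinges on replacing this route by the telescoping-plus-monotonicity argument above, which controls feasibility directly through $\nabla^+ d$ rather than through the distance to $u^*$, together with the linear-constraint identity $\nabla d(x^*)=0$ that transfers the same $\mathcal{O}(1/k)$ rate to suboptimality.
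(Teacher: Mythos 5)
Your proposal is correct and follows essentially the same route as the paper's own proof: summing the ascent inequality \eqref{dg_pr2} over the last $k$ iterations, invoking the monotone decrease \eqref{decrease_gm} of the gradient-map norm and the dual estimate \eqref{bound_dual_optim_adg} to get $\|\nabla d(x^{2k})\| = \mathcal{O}(1/k)$, and then transferring this to suboptimality via \eqref{ineq_optl}--\eqref{ineq_feas3} with $G u^* + g = 0$ and the bound \eqref{dg_pr1}. The only differences are cosmetic (summation over $j=k,\dots,2k-1$ versus $j=k,\dots,2k$, and slightly different absolute constants).
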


\begin{proof}
We have proved in \eqref{dg_pr2} that gradient algorithm is an ascent method, i.e.:
\begin{align*}
d(x^{j+1}) - d(x^j)  \geq  \frac{L_\text{d}}{2}  \|x^{j+1} - x^j \|^2 =  \frac{L_\text{d}}{2} \|\nabla^+ d(x^j)\|^2   \qquad \forall j \geq 0.
\end{align*}

\noindent  Adding for $j=k$ to $j=2k$ and using that the gradient map sequence is decreasing along the iterations of Algorithm {\bf (DG)} (see Lemma \ref{lemma2_dg}),  we get:
\begin{align}
\label{sumk2k} d(x^{2k+1}) -  d(x^{k})  & \geq  \sum_{j=k}^{2k}
\frac{L_\text{d}}{2} \|\nabla^+ d(x^j)\|^2 \overset{\eqref{decrease_gm}}{\geq} \frac{L_\text{d}(k+1)}{2} \|\nabla^+ d(x^{2k})\|^2  \nonumber \\
& \overset{\eqref{decrease_ggm}}{\geq} \frac{k+1}{2L_\text{d}} \|\nabla d(x^{2k})\|^2.
\end{align}
Since $d(x^{2k+1}) \leq f^*$, we obtain:
\[ \frac{k+1}{2L_\text{d}} \|\nabla d(x^{2k})\|^2  \overset{\eqref{sumk2k}}{\leq} f^* -
d(x^{k}) \overset{\eqref{bound_dual_optim_adg}}{\leq} \frac{4
L_\text{d} \mathcal{R}_\text{d}^2}{k}.   \] From $\nabla d(x) = G
u(x) + g$  we obtain a sublinear estimate for feasibility violation
of the last primal iterate $u^{2k} = u(x^{2k})$ of Algorithm
(\textbf{DG}):
\begin{align}
\label{lingpf} \|G u^{2k} + g \| = \|\nabla d(x^{2k})\|   \leq  \frac{3 L_\text{d} \mathcal{R}_\text{d}}{k}.
\end{align}

\noindent We can also characterize primal suboptimality in the last
iterate $u^{2k} $ for Algorithm (\textbf{DG}) using that $G
u^* + g=0$, the  estimate on infeasibility  \eqref{lingpf}  and the
inequalities \eqref{ineq_optl}--\eqref{ineq_feas3}:
\begin{align}
\label{lingps} |f(u^{2k}) - f^*| & \leq   \left( \|x^{2k} - x^*\| +
\|x^*\|
\right ) \|G u^{2k} + g \|  \overset{\eqref{dg_pr1} + \eqref{lingpf}}{\leq}  (2
\mathcal{R}_\text{d} + \|x^0\|) \frac{3 L_\text{d}
\mathcal{R}_\text{d}}{k}.
\end{align}

\noindent Therefore, we have obtained sublinear estimates  of order
$\mathcal{O}(\frac{1}{k})$ for primal infeasibility (inequality
\eqref{lingpf}) and primal suboptimality (inequality \eqref{lingps})
for the last primal iterate sequence $(u^k)_{k \geq 0}$ generated by
Algorithm~\textbf{(DG)}. Now, it is  straightforward to see that if
we want to get an $\epsilon$-primal solution in $u^{2k}$ we need to
perform $2k = {\mathcal O} (\frac{1}{\epsilon})$ iterations.
\end{proof}

\noindent  In conclusion,  from Theorem \ref{th_lingdglast} it
follows that we obtain $\epsilon$-accuracy for primal suboptimality
and infeasibility for Algorithm (\textbf{DG}) in the last primal
iterate $u^{k}$ after $k = \mathcal{O}(\frac{1}{\epsilon})$
iterations. This is better than $\mathcal{O}(\frac{1}{\epsilon^2})$
iterations  obtained in Theorem \ref{th_dglast}  for the last primal
iterate $u^k$ and it is of the same order as for the primal average
sequence  $\hat u^k$ from Theorem \ref{th_dgav}. However, this
better result is obtained for the particular linearly constrained
convex problem \eqref{ling}. Note that an immediate consequence of
Lemma \ref{lemma2_dg}  for this case $\ca^* = \rset^p$  is that the
sequence  $\|\nabla d(x^j)\|$ is  decreasing, i.e.: $$\|\nabla
d(x^{j+1})\| \leq \|\nabla d(x^{j})\|.  \quad  \forall j \geq 0$$

\vspace{0.2cm}

\noindent \textbf{Case 2}:  We now consider an hybrid algorithm
that applies  $k$ steps of  Algorithm  (\textbf{DFG}) and then $k$
steps of  Algorithm  (\textbf{DG}) for solving the corresponding
dual of  problem \eqref{ling}.

\begin{theorem}
\label{th_linghdfgdglast} Under the assumptions of Theorem
\ref{th_lingdglast}  let the sequences $\left(x^k, y^k,
u^k\right)_{k\geq 0}$ be generated by  applying $k$ steps of
Algorithm  (\textbf{DFG}) and then $k$ steps of  Algorithm
(\textbf{DG})  with $\alpha_k = 1/L_\text{d}$. Then, for a given
accuracy $\epsilon>0$ we get an $\epsilon$-primal solution for
\eqref{ling} in the last primal  iterate $u^{2k}$ of  this algorithm
after $2k = {\mathcal O} (\frac{1}{\epsilon^{2/3}})$ iterations.
\end{theorem}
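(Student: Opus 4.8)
The plan is to exploit the superior dual suboptimality of the fast gradient phase to feed a sharper quantity into the gradient-norm squeeze that powered Case~1. After the first $k$ iterations of Algorithm \textbf{(DFG)} the handover point, call it $x^k$, satisfies the dual estimate \eqref{bound_dual_optim_dfg}, i.e. $f^* - d(x^k) \leq \frac{2 L_\text{d} \mathcal{R}_\text{d}^2}{(k+1)^2}$, which is one order better than the $\mathcal{O}(1/k)$ bound \eqref{bound_dual_optim_adg} available in Theorem \ref{th_lingdglast}. I then run $k$ further steps of Algorithm \textbf{(DG)} from $x^k$ and reproduce verbatim the telescoping argument of Theorem \ref{th_lingdglast}: summing the ascent inequality \eqref{dg_pr2} over the gradient phase and invoking monotonicity of the gradient map (Lemma \ref{lemma2_dg}, \eqref{decrease_gm}) together with \eqref{decrease_ggm} specialized to $\ca=\{0\}$ (so that $\text{dist}_{\{0\}}(-\nabla d(x^{2k})) = \|\nabla d(x^{2k})\|$) yields the chain
\[ \frac{k+1}{2L_\text{d}}\|\nabla d(x^{2k})\|^2 \leq \frac{L_\text{d}(k+1)}{2}\|\nabla^+ d(x^{2k})\|^2 \leq d(x^{2k+1}) - d(x^k) \leq f^* - d(x^k) \leq \frac{2 L_\text{d} \mathcal{R}_\text{d}^2}{(k+1)^2}. \]

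Rearranging gives the key infeasibility estimate $\|G u^{2k} + g\| = \|\nabla d(x^{2k})\| \leq \frac{2 L_\text{d} \mathcal{R}_\text{d}}{(k+1)^{3/2}}$, of order $\mathcal{O}(1/k^{3/2})$; this is precisely where the improved exponent originates. For primal suboptimality I use \eqref{ineq_optl}--\eqref{ineq_feas3} with $G u^* + g = 0$, exactly as in \eqref{lingps}, to get $|f(u^{2k}) - f^*| \leq (\|x^{2k} - x^*\| + \|x^*\|)\,\|G u^{2k} + g\|$. The distance $\|x^{2k} - x^*\|$ must then be controlled across the switch: the fast gradient phase supplies $\|x^k - x^*\| \leq \mathcal{R}_\text{d}$ through \eqref{dfg_pr1}, and the gradient phase preserves it through \eqref{dg_pr1}, so that $\|x^{2k} - x^*\| \leq \mathcal{R}_\text{d}$ and hence $|f(u^{2k}) - f^*| \leq (2\mathcal{R}_\text{d} + \|x^0\|)\frac{2 L_\text{d} \mathcal{R}_\text{d}}{(k+1)^{3/2}} = \mathcal{O}(1/k^{3/2})$. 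Since the total iteration count is $2k$, requiring both quantities to fall below $\epsilon$ forces $k = \mathcal{O}(1/\epsilon^{2/3})$, whence $2k = \mathcal{O}(1/\epsilon^{2/3})$.

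The only genuinely delicate point is checking that the two intrinsic properties of the gradient iterates used in the squeeze---monotonicity of the gradient map \eqref{decrease_gm} and non-expansiveness of the distance to $X^*$ in \eqref{dg_pr1}---survive the transition from the fast to the plain gradient scheme. This is immediate, since both hold for Algorithm \textbf{(DG)} from an arbitrary initialization; the fast gradient phase is only asked to hand over a point that is simultaneously close to $X^*$ (for \eqref{dfg_pr1}) and carries a small dual gap $f^* - d(x^k)$ of order $1/k^2$. It is exactly the interplay between this $\mathcal{O}(1/k^2)$ dual accuracy and the telescoping over the $k+1$ accumulated gradient-map terms that produces $\|\nabla d(x^{2k})\|^2 = \mathcal{O}(1/k^3)$, improving the exponent from the $\mathcal{O}(1/\epsilon)$ of Theorem \ref{th_lingdglast} down to $\mathcal{O}(1/\epsilon^{2/3})$.
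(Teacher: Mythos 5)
Your proposal is correct and follows essentially the same route as the paper's proof: the same telescoping of the ascent inequality \eqref{dg_pr2} over the gradient phase, combined with the gradient-map monotonicity \eqref{decrease_gm}, the bound \eqref{decrease_ggm} (with $\ca=\{0\}$), and the $\mathcal{O}(1/k^2)$ dual bound \eqref{bound_dual_optim_dfg} at the handover point, followed by the identical suboptimality argument via \eqref{ineq_optl}--\eqref{ineq_feas3}. Your only (welcome) refinement is making explicit that $\|x^{2k}-x^*\|\le\mathcal{R}_\text{d}$ requires composing \eqref{dfg_pr1} at the switch with \eqref{dg_pr1} along the gradient phase, a point the paper leaves implicit.
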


\begin{proof}
Since  the gradient algorithm is an ascent method (see \eqref{dg_pr2}), we have:
\begin{align*}
d(x^{j+1}) - d(x^j)  \geq  \frac{L_\text{d}}{2}  \|x^{j+1} - x^j \|^2 =  \frac{L_\text{d}}{2} \|\nabla^+ d(x^j)\|^2   \qquad \forall j \geq k.
\end{align*}

\noindent  Adding for $j=k$ to $j=2k$ and using the decrease of the  gradient map,  we get:
\begin{align*}
 d(x^{2k+1})  -  d(x^{k})  & \geq  \sum_{j=k}^{2k}
\frac{L_\text{d}}{2} \|\nabla^+ d(x^j)\|^2
\overset{\eqref{decrease_gm}}{\geq}  \frac{L_\text{d}(k+1)}{2}
\|\nabla^+ d(x^{2k})\|^2   \overset{\eqref{decrease_ggm}}{\geq}
\frac{k+1}{2L_\text{d}} \|\nabla d(x^{2k})\|^2.
\end{align*}

\noindent Since $d(x^{2k+1}) \leq f^*$, we obtain:
$\frac{k+1}{2L_\text{d}}  \|\nabla d(x^{2k})\|^2  \leq f^* -
d(x^{k}) \overset{\eqref{bound_dual_optim_dfg}}{\leq} \frac{2
L_\text{d} \mathcal{R}_\text{d}^2}{(k+1)^2}$.   From $\nabla d(x) =
G u(x) + g$  we obtain a sublinear estimate for feasibility
violation of the last primal iterate  $u^{2k} = u(x^{2k})$ of this
hybrid algorithm:
\begin{align}
\label{lingpfh} \|G u^{2k} + g \| = \|\nabla d(x^{2k})\|   \leq
\frac{2 L_\text{d} \mathcal{R}_\text{d}}{(k+1)^{3/2}}.
\end{align}

\noindent We can also characterize primal suboptimality in the last
iterate $u^{2k} $ for  this hybrid algorithm  using that $G
u^* + g=0$, the  estimate   \eqref{lingpfh}  and the
inequalities \eqref{ineq_optl}--\eqref{ineq_feas3}:
\begin{align}
\label{lingpsh} |f(u^{2k}) - f^*| & \!\leq\!   \left( \|x^{2k} -
x^*\| + \|x^*\|\right ) \|G u^{2k} + g \| \!\overset{\eqref{lingpfh}
+ \eqref{dfg_pr1}}{\leq}\! (2\mathcal{R}_\text{d} + \|x^0\|) \frac{2
L_\text{d} \mathcal{R}_\text{d}}{(k+1)^{3/2}}.
\end{align}

\noindent Therefore, we have obtained sublinear estimates  of order
$\mathcal{O}(\frac{1}{k^{3/2}})$ for primal infeasibility
(inequality \eqref{lingpfh}) and primal suboptimality (inequality
\eqref{lingpsh}) for the last primal iterate sequence $(u^k)_{k \geq
0}$ generated by an algorithm applying $k$ steps  of \textbf{(DFG)}
and then $k$ steps  of \textbf{(DG)}. Now, it is straightforward to
see that if we want to get an $\epsilon$-primal solution in $u^{2k}$
we need to perform $2k = {\mathcal O} (\frac{1}{\epsilon^{2/3}})$
iterations.
\end{proof}

\noindent For the linear constrained problem \eqref{ling} in
\cite{BecTeb:14} convergence rate $\mathcal{O}(\frac{1}{k})$ was
derived for the last primal iterate of Algorithm  \textbf{(DFG)}
(see also our Theorem \ref{th_dfglast} that  gives the same
convergence rate for conic problems). However, Theorem
\ref{th_linghdfgdglast} shows that applying further $k$ gradient
steps we can improve the convergence rate to
$\mathcal{O}(\frac{1}{k^{3/2}})$ for problem \eqref{ling}.

\vspace{0.2cm}

\noindent  In conclusion, in this paper we obtained the following
estimates for the convergence rate of dual first order methods:
\begin{itemize}
\item in a primal average sequence we have
$\mathcal{O}(\frac{1}{\epsilon})$ for Algorithm (\textbf{DG}) and
$\mathcal{O}(\sqrt{\frac{1}{\epsilon}})$ for Algorithm
(\textbf{DFG})
\item  in the last iterate they are are summarized in Table 1.
\end{itemize}

\begin{table}[ht]
\centering \caption{Rate of convergence estimates of dual first
order methods in the last primal iterate.}

\begin{tabular} {| c || c | c | p{2cm} | c | c | c | p{2cm} |}
\hline
Alg.  &  DG     &  DFG  &  regularized  DFG   &  DG  &  R-DFG  &  $2k$-DG & hybrid  DFG-DG  \\
[1ex]\hline Prob.   & \eqref{eq_prob_princc} &
\eqref{eq_prob_princc} & \eqref{eq_prob_princc} &
\eqref{eq_prob_princc}$+$\eqref{eq_error_bound} &
  \eqref{eq_prob_princc}$+$\eqref{eq_error_bound} & \eqref{ling}  & \eqref{ling}  \\
[1ex]\hline Rates & $\mathcal{O}(\frac{1}{\epsilon^2})$ &
$\mathcal{O}(\frac{1}{\epsilon})$ &
$\mathcal{O}(\sqrt{\frac{1}{\epsilon}} \log \frac{1}{\epsilon})$  &
$\mathcal{O}(\log \frac{1}{\epsilon})$ & $\mathcal{O}(\log
\frac{1}{\epsilon})$ & $\mathcal{O}(\frac{1}{\epsilon})$ &
$\mathcal{O}(\frac{1}{\epsilon^{2/3}})$   \\
[1ex] \hline
\end{tabular}
\end{table}


\section{Better convergence rates  for dual first order methods
in the last primal iterate for conic convex problems}

In this section we prove that some of the results of the previous
section can be extended to conic  convex  problem
\eqref{eq_prob_princc}. More precisely, we prove that we can improve
substantially the convergence estimates for primal infeasibility and
left hand side suboptimality  of dual first order methods  in the last iterate for the general problem \eqref{eq_prob_princc}.

\vspace{0.2cm}

\noindent \textbf{Case 1}:  We first consider  applying $2k$ steps
of  Algorithm  (\textbf{DG}).  For simplicity,    let us assume
constant step size $\alpha_k = 1/L_\text{d}$ for solving the
corresponding  dual of  problem \eqref{eq_prob_princc}. Indeed, we
have proved in \eqref{dg_pr2} that gradient algorithm is an ascent
method, i.e.:
\begin{align*}
d(x^{j+1}) - d(x^j)  \geq  \frac{L_\text{d}}{2}  \|x^{j+1} - x^j \|^2 =
\frac{L_\text{d}}{2} \|\nabla^+ d(x^j)\|^2   \qquad \forall j \geq 0.
\end{align*}

\noindent  Adding for $j=k$ to $j=2k$ and using that the gradient
map sequence is decreasing along the iterations of Algorithm {\bf
(DG)} (see Lemma \ref{lemma2_dg}),  we get:
\begin{align}
\label{sumk2k_cone} d(x^{2k+1}) -  d(x^{k})  & \geq  \sum_{j=k}^{2k}
\frac{L_\text{d}}{2} \|\nabla^+ d(x^j)\|^2
\overset{\eqref{decrease_gm}}{\geq} \frac{L_\text{d}(k+1)}{2}
\|\nabla^+ d(x^{2k})\|^2  \nonumber \\
& \overset{\eqref{decrease_ggm}}{\geq} \frac{k+1}{2L_\text{d}}
 d_{\ca} \left(-\nabla d(x^{2k}) \right)^2.
\end{align}
Since $d(x^{2k+1}) \leq f^*$, we obtain:
\[ \frac{k+1}{2L_\text{d}} d_{\ca} \left(-\nabla d(x^{2k}) \right)^2  \overset{\eqref{sumk2k}}{\leq} f^* -
d(x^{k}) \overset{\eqref{bound_dual_optim_adg}}{\leq} \frac{4
L_\text{d} \mathcal{R}_\text{d}^2}{k}.   \] From $\nabla d(x) = -G
u(x) - g$  we obtain a sublinear estimate for feasibility violation
of the last primal iterate $u^{2k} = u(x^{2k})$ of Algorithm
(\textbf{DG}):
\begin{align}
\label{conegpf} d_{\ca}(G u^{2k} + g) = d_{\ca}(-\nabla d(x^{2k}))
\leq  \frac{3 L_\text{d} \mathcal{R}_\text{d}}{k}.
\end{align}

\noindent We can also characterize primal suboptimality in the last
iterate $u^{2k} $ for Algorithm (\textbf{DG}). On one hand, using
the  estimate on infeasibility  \eqref{conegpf} and the definition
of the dual cone $\ca^*$, we have:
\begin{align}\label{conegps_left}
f^* &=  \min_{u \in U} f(u) + \langle x^*, g(u) \rangle \leq f(u^{2k})
+ \langle x^*, g(u^{2k}) \rangle \nonumber\\
& =  f(u^{2k}) + \langle x^*, -G u^{2k} - g \rangle \leq f(u^{2k})
+ \langle x^*,  [G u^{2k} + g]_{\ca} - (G u^{2k} + g) \rangle \nonumber\\
& \leq f(u^{2k}) + \norm{x^*} \text{dist}_{\mathcal{K}}(G u^{2k} +
g)  \overset{\eqref{conegps_left}}{\leq} f(u^{2k}) + \frac{3
L_{\text{d}} \mathcal{R}_\text{d}}{k} ( \mathcal{R}_\text{d} +
\|x^0\|).
\end{align}
On the other hand, using \eqref{ineq_optr}, we have
\begin{align}
\label{conegps_right} f(u^{2k}) - f^* & \leq   \left( \|x^{2k} - x^*\| +
\|x^*\| \right )\|G\| \|u^{2k} - u^* \|  \nonumber \\
&\overset{\eqref{dg_pr1} + \eqref{dist_sublin_dg}}{\leq}  2
L_\text{d} \mathcal{R}_\text{d} (2 \mathcal{R}_\text{d} + \|x^0\|)
\sqrt{\frac{1}{k}}.
\end{align}

\noindent Therefore, we have obtained sublinear estimates  of order
$\mathcal{O}(\frac{1}{k})$ for primal infeasibility (inequality
\eqref{conegpf}) and left hand side suboptimality (inequality
\eqref{conegps_left}) and of order $\mathcal{O}(\frac{1}{\sqrt{k}})$
for right hand side primal suboptimality (inequality
\eqref{conegps_right}) for the last primal iterate sequence
$(u^k)_{k \geq 0}$ generated by Algorithm~\textbf{(DG)}.

\vspace{0.2cm}

\noindent \textbf{Case 2}:  We now consider an hybrid  algorithm
that applies  $k$ steps of  Algorithm (\textbf{DFG}) and then $k$
steps of  Algorithm  (\textbf{DG}) for solving the corresponding
dual of problem \eqref{eq_prob_princc}. Since  the gradient
algorithm is an ascent method (see \eqref{dg_pr2}), we have:
\begin{align*}
d(x^{j+1}) - d(x^j)  \geq  \frac{L_\text{d}}{2}  \|x^{j+1} - x^j
\|^2 =   \frac{L_\text{d}}{2} \|\nabla^+ d(x^j)\|^2   \qquad \forall
j \geq k.
\end{align*}

\noindent  Adding for $j=k$ to $j=2k$ and using the decrease of the  gradient map,  we get:
\begin{align*}
 d(x^{2k+1})  -  d(x^{k})  \geq  \sum_{j=k}^{2k}
\frac{L_\text{d}}{2} \|\nabla^+ d(x^j)\|^2 & \overset{\eqref{decrease_gm}}{\geq}
\frac{L_\text{d}(k+1)}{2} \|\nabla^+ d(x^{2k})\|^2 \\
& \overset{\eqref{decrease_ggm}}{\geq} \frac{k+1}{2L_\text{d}}
 d_{\ca}(-\nabla d(x^{2k}))^2.
\end{align*}

\noindent Since $d(x^{2k+1}) \leq f^*$, we obtain:
$\frac{k+1}{2L_\text{d}} d_{\ca}(-\nabla d(x^{2k}))^2  \leq f^* -
d(x^{k}) \overset{\eqref{bound_dual_optim_dfg}}{\leq} \frac{2
L_\text{d} \mathcal{R}_\text{d}^2}{(k+1)^2}$.   From $\nabla d(x) =
- G u(x) - g$  we obtain a sublinear estimate for feasibility
violation of the last primal iterate  $u^{2k} = u(x^{2k})$ of this
hybrid algorithm:
\begin{align}
\label{gpfh_cone} d_{\ca}(G u^{2k} + g) = d_{\ca}(-\nabla d(x^{2k}))
\leq  \frac{2 L_\text{d} \mathcal{R}_\text{d}}{(k+1)^{3/2}}.
\end{align}

\noindent We can also characterize primal suboptimality in the last
iterate $u^{2k} $ for  this hybrid algorithm. Using the  estimate
\eqref{gpfh_cone}, a similar reasoning as in the relations
\eqref{conegps_left} leads to:
\begin{align}\label{fgpsh_cone_left}
 -\frac{2L_{\text{d}}(\mathcal{R}_{\text{d}}^2 + \mathcal{R}_{\text{d}} \norm{x^0})}{(k+1)^{3/2}} \le
 -\frac{2L_{\text{d}}\mathcal{R}_{\text{d}} \norm{x^*}}{(k+1)^{3/2}} \le f(u^{2k})  - f^*.
\end{align}
On the other hand, from \eqref{ineq_optr} it can be derived:
\begin{align} \label{fgpsh_cone_right}
f(u^{2k}) - f^* & \leq   \left( \|x^{2k} - x^*\| +
\|x^*\| \right )\|G\| \|u^{2k} - u^* \|  \nonumber \\
& \leq  (2 \mathcal{R}_\text{d} + \|x^0\|) \frac{3 L_\text{d}
\mathcal{R}_\text{d}}{k+1}.
\end{align}

\noindent Therefore, we have obtained sublinear estimates  of order
$\mathcal{O}(\frac{1}{k^{3/2}})$ for primal infeasibility
(inequality \eqref{gpfh_cone}) and for left hand side primal
suboptimality (inequality \eqref{fgpsh_cone_left}) and of order
$\mathcal{O}(\frac{1}{k})$ for right hand side primal suboptimality
(inequality \eqref{fgpsh_cone_right}) for the last primal iterate
sequence $(u^k)_{k \geq 0}$ generated by Algorithm~\textbf{(DG)}.

\section{Numerical simulations}
\label{sec_numerical} For  numerical experiments we consider random
problems of the following form:

\begin{align*}
& \min_{u \in \rset^n} \frac{1}{2} u^T Q u + q^T u + \gamma \log(1 + a^T x + e^{b^T u}) \\
& \text{s.t.}: \quad Gu + g \leq 0, \quad \text{lb} \leq u \leq
\text{ub},
\end{align*}
where $Q$ is positive definite matrix with
$\sigma_\text{f}=\lambda_\text{min}(Q) = 1$, $G \in \rset^{3n/2
\times n}$, $q, a, b \in \rset^n$  and $\gamma \in \rset$. We need
to remark that the objective function is not convex for $a, b \not =
0$, but it is convex e.g. when $(\gamma < 0, a \geq 0, b=0)$ on
$\rset^n_+$ or when $(\gamma  > 0, a = 0, b \not =0)$ on $\rset^n$.
Note that this type of problems arises in many practical
applications: in network utility maximization \cite{BecNed:14}
$(\gamma <0, a \geq 0, b=0)$; in resource allocation problems
\cite{XiaBoy:06} $(\gamma >0, a=0, b \not = 0)$; in optimal power
flow or model predictive control \cite{NecNed:13} ($\gamma=0$). All
the data of the problem are generated randomly and $G$ is sparse
having  tens of nonzeros ($\simeq 50$) on each row for large
problems ($n \gg 10^3$). We have considered the accuracy
$\epsilon=10^{-2}$, the value for $\gamma= \pm 0.5$  and the
stopping criteria in the tables below were chosen as follows:
\[  \text{ds}= | d(x^{k+1})  - d(x^k)| \leq \epsilon^2
\quad \text{and} \quad \text{pf}= \|[G w^k + g]_+\| \leq \epsilon,
\] where $w^k$ is either the last primal iterate ($u^k/v^k$) or average
of primal iterates~($\hat{u}^k$) and we allow at most $15000$ number
of iterations for each algorithm.

\subsection{Case 1: $(\gamma < 0, a > 0, b=0)$}
In the first set of experiments we choose $(\gamma < 0, a > 0, b=0)$
and  simple constraints $u \geq 0$ (e.g. network utility
maximization problems \cite{BecNed:14} can be recast in this form).
In this type of applications the complicating constraints $Gu + g
\leq 0$ are related to the capacity of the links and we need to also
impose   simple  constraints $u \geq 0$, since  $u$ represents  the
source rates. Note that the objective function is  strongly convex
and with  Lipschitz gradient on $U=\rset^n_+$.  However, the presence of simple
constraints $u \geq 0$ makes the dual function degenerate (i.e. $d$ does not
satisfy  an error bound property).

\noindent Typically, the performance  in  terms of primal
suboptimality and infeasibility of Algorithms \textbf{(DG)} and
\textbf{(DFG)} in the primal last iterate or in the  average of
primal iterates   is oscillating as Fig. 1 shows. However, these
algorithms have a smoother behavior  in the average of iterates than
in the last iterate.  Moreover, from our numerical experience we
have observed that for our dual first order methods we usually have
a better behavior in the last iterate than in the average of
iterates as we can also see from Fig. 1 and Table 1 (in the table we
display the average number of iterations for $10$ random problems
for each dimension $n$ ranging from $10$ to $10^4$). On the other
hand,  our  worst case convergence analysis says differently, i.e.
we have obtained better theoretical estimates in the primal  average
sequence than in the last primal  iterate sequence. This does not
mean that our  analysis is weak, since we can also construct
problems which show the behavior predicted by our theory, see e.g.
Fig. 2 where indeed we have a better behavior in the average of
iterates than in the last iterate.

\noindent Finally, in Fig. 3 we plot the practical  number of
iterations of Algorithms \textbf{(DG)} and \textbf{(DFG)} for
different test cases of the same dimension $n=50$ (left) and for
different test cases of variable dimension ranging from $n=10$ to
$n=500$ (right). From this figure we observe that the number of
iterations are not varying much for different test cases and also
that the number of iterations are mildly dependent of problem's
dimension.

\begin{figure}[ht]
\label{fig_contradict}
\caption{Typical performance  in terms of
primal suboptimality and infeasibility of
Algorithms \textbf{(DG)} in the last iterate (DG-last), \textbf{(DG)} in average (DG-average),
\textbf{(DFG)} in the last iterate (DFG-last) and \textbf{(DFG)} in average (DFG-average) for $n=50$.}
\centerline{\includegraphics[height=5.5cm,width=13cm]{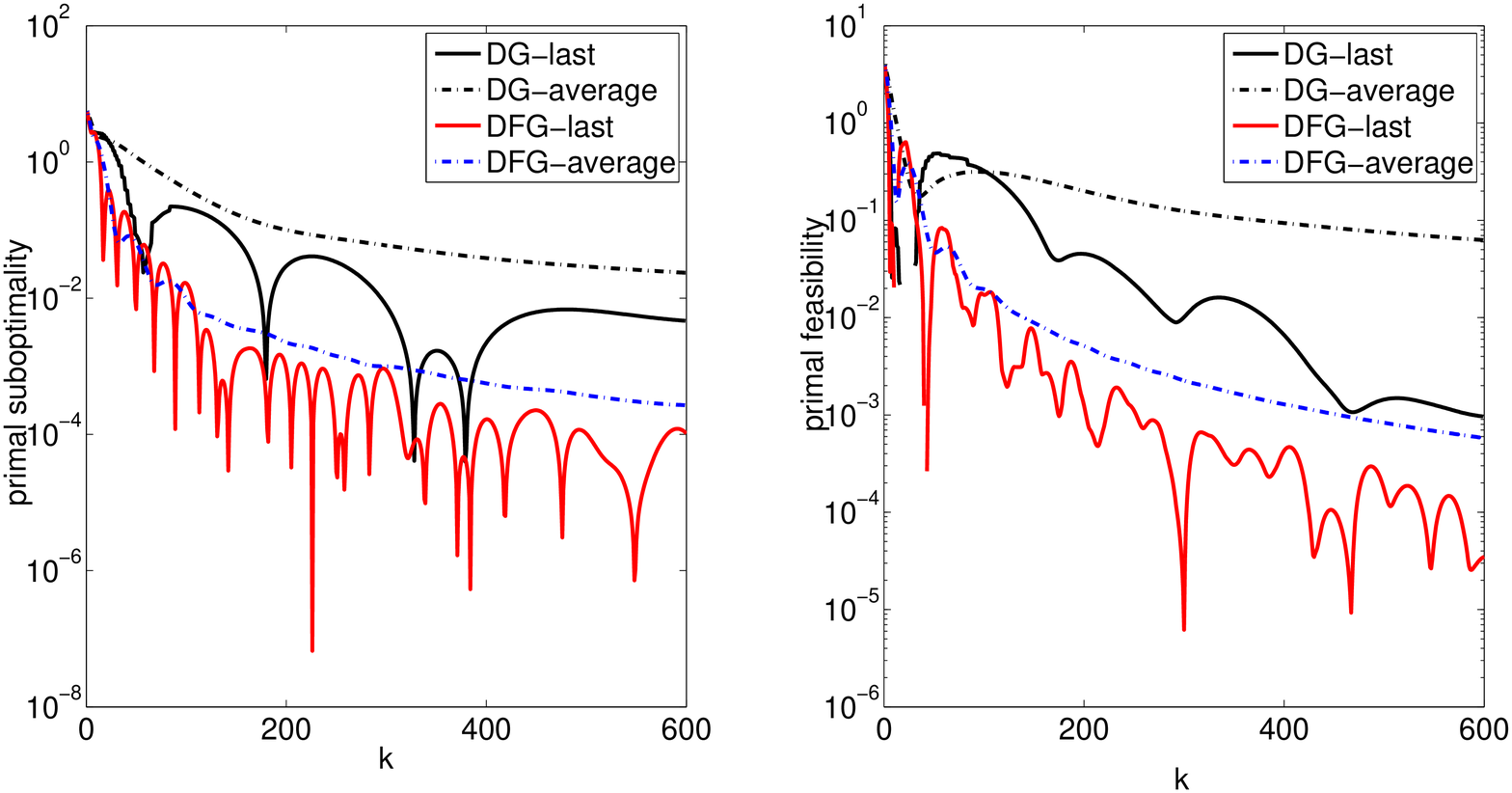}}
\end{figure}

\begin{table*}[ht]
\label{table_nss} \centering \caption{Average number of iterations
for $10$ random problems for each dimension $n$ for Algorithms
\textbf{(DG)} and \textbf{(DFG)} in the last iterate and in the
average of iterates.   We  observe that dual first order methods perform
better in the primal last iterate than in the average of iterates.}
\begin{tabular}{|c|c|c|c|c|c|c|}
\hline Alg./n                     & $10$  & $50$  & $10^2$  &  $10^3$   & $5*10^3$  &  $10^4$  \\
\hline
$k_{\mathrm{last}}^{\mathrm{DG}}$ & $44$  & $519$ & $621$ & $5546$  &  $7932$   &  $9207$\\
\hline

$k_{\mathrm{avg.}}^\mathrm{DG}$  & $504$ & $1498$ & $3706$ & $9830$ & $-$ & $-$ \\
\hline \hline

$k_{\mathrm{last}}^{\mathrm{DFG}}$   & $13$   & $75$ & $92$  & $382$    & $691$    & $1145$ \\
\hline

$k_{\mathrm{avg.}}^{\mathrm{DFG}}$   & $28$    & $88$ & $123$  & $602$  & $1078$   & $1981$ \\
\hline
\end{tabular}
\end{table*}

\begin{figure}[ht]
\label{fig_fit} \caption{Practical performance comparable with the
theoretical estimates for primal suboptimality and infeasibility of
Algorithms  \textbf{(DG)} in the last iterate (DG-last),
\textbf{(DG)} in average (DG-average), \textbf{(DFG)} in the last
iterate (DFG-last) and \textbf{(DFG)} in average (DFG-average) for
$n=100$.}
\centerline{\includegraphics[height=5.5cm,width=13cm]{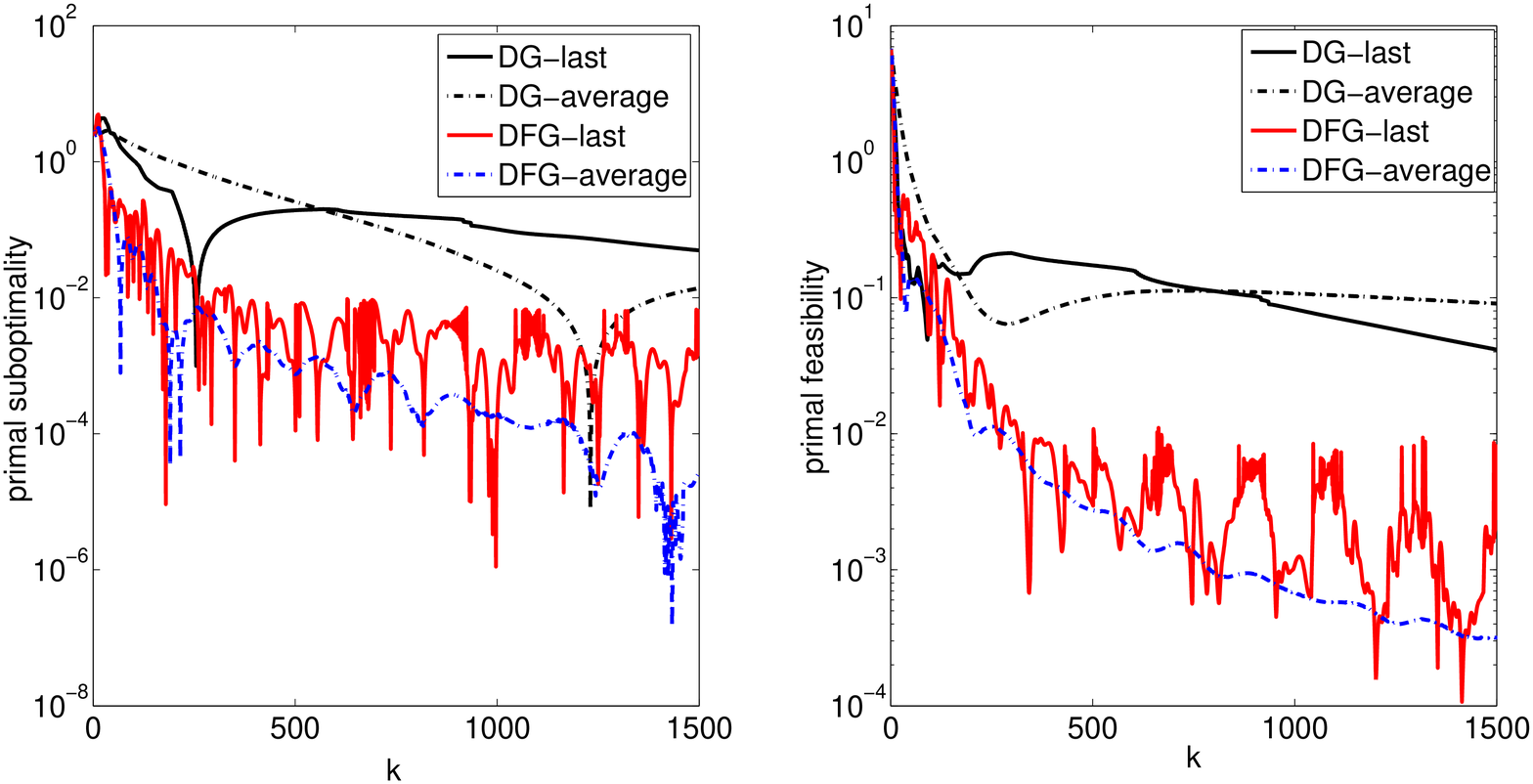}}
\end{figure}

\begin{figure}[ht]
\label{fig_fitt} \caption{Practical number of iterations  of
Algorithms \textbf{(DG)} in the last iterate (DG-last),
\textbf{(DG)} in average (DG-average), \textbf{(DFG)} in the last
iterate (DFG-last) and \textbf{(DFG)} in average (DFG-average) for
$30$ random test cases of fixed dimension  $n=50$ (left) or variable
dimension ranging from $n=10$ to $n = 500$ (right).}
\centerline{\includegraphics[height=5.5cm,width=13cm]{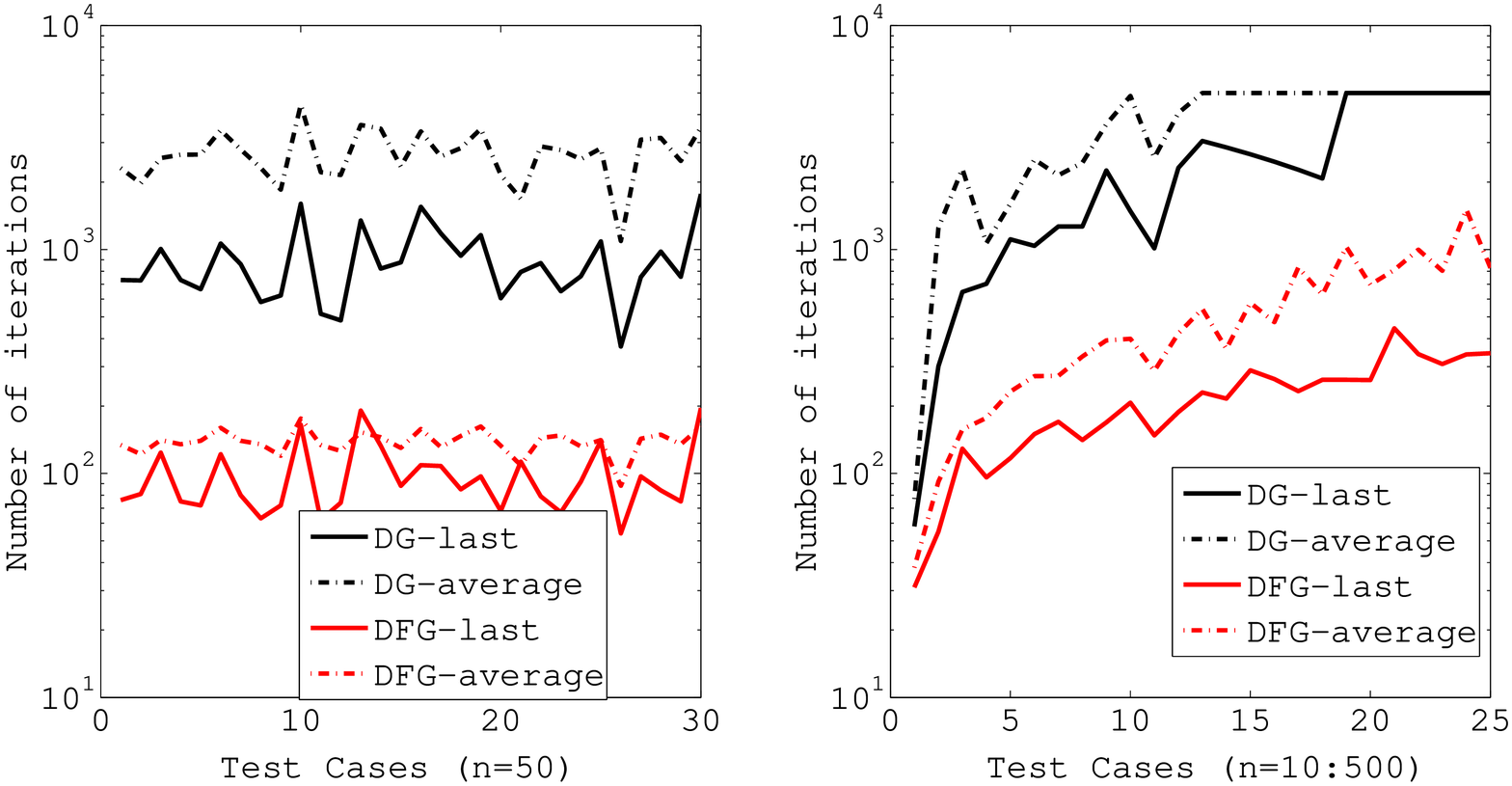}}
\end{figure}


\subsection{Case 2: $\gamma>0, a  = 0, b \not =0$}
\noindent In the second set of experiments we choose $(\gamma>0, a
= 0,  b \not =0)$  and simple box constraints $\text{lb} \leq u \leq
\text{ub}$ defining the set $U$ (e.g. this optimization model, in separable form, was
considered  in \cite{XiaBoy:06} for resource allocation problems).
In this case the objective function is strongly convex and has
Lipschitz gradient on $\rset^n$. Therefore, if the simple box constraints   are
missing,  then according to our theory given in Section
\ref{sec_lin} Algorithm \textbf{(DG)} is converging linearly.

\noindent We first  consider  box constraints $U=[\text{lb} \; \text{ub}]$
and the results (average number of iterations) are shown in Table 2
for $10$ random problems for each dimension $n$ ranging from $10$ to
$10^4$. We can again observe that dual first order methods perform
better in the primal last iterate than in the average of iterates. Further, we can notice
that the behavior of Algorithm \textbf{(DG)} in the last iterate is
comparable to that of Algorithm \textbf{(DFG)} in average. However,
the inner problem has to be solved with higher accuracy in Algorithm
\textbf{(DFG)} than in \textbf{(DG)} since the first one is more
sensitive to errors, such as inexact first order information, than
the last one (see \cite{NecNed:13} for a more in depth discussion on inexact dual
first order methods).
\begin{table*}[ht]
\label{table_ns} \centering \caption{Average number of iterations
for $10$ random problems for each dimension $n$ for Algorithms
\textbf{(DG)} and \textbf{(DFG)} in the last iterate and in the
average of iterates. We can again observe that dual first order methods perform
better in the primal last iterate than in the average of iterates.}
\begin{tabular}{|c|c|c|c|c|c|c|}
\hline Alg./n                     & $10$  & $50$  & $10^2$  &  $10^3$   & $5*10^3$  &  $10^4$  \\
\hline
$k_{\mathrm{last}}^{\mathrm{DG}}$ & $35$  & $195$ & $463$ & $782$  &  $1147$   &  $2155$\\
\hline

$k_{\mathrm{avg.}}^\mathrm{DG}$  & $527$ & $3423$ & $12697$ & $-$ & $-$ & $-$ \\
\hline \hline

$k_{\mathrm{last}}^{\mathrm{DFG}}$  & $19$   & $61$ & $97$  & $198$    & $276$    & $292$ \\
\hline

$k_{\mathrm{avg.}}^{\mathrm{DFG}}$ & $41$    & $108$ & $186$  & $381$  & $563$   & $582$ \\
\hline
\end{tabular}
\end{table*}

\noindent Then, we also take $\gamma=0$ and we solve the corresponding QP problems   over an
increasing dimension $n=10$ to $10^3$. In Fig. 4  we compare for Algorithm \textbf{(DFG)}  the real number of iterates in the primal latest iterate and average of iterates   and the estimated number of iterates ${\cal O}(1/k^2)$  for a primal suboptimality and infeasibility level of $\varepsilon=10^{-2}$. We observe from Fig. 4
that our theoretical estimates are quite close to the practical ones for the
dual fast gradient method.
\begin{figure}[ht]
\label{real_theory}
\caption{Real number of iterates  in the primal latest iterate and average of iterates
 and the estimated number of iterates ${\cal O}(1/k^2)$ for Algorithm \textbf{(DFG)}.}
\centerline{\includegraphics[height=4cm,width=9cm]{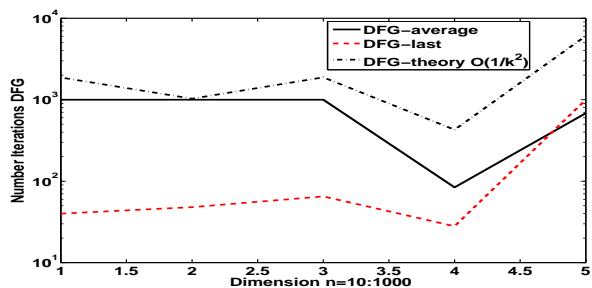}}
\end{figure}

\noindent Finally, we drop  the simple box constraints (i.e. now  $U =
\rset^n$) and for dimension $n=10^2$ we  plot in Fig. 5
the behavior of Algorithm \textbf{(DG)} in the last iterate along
iterations, starting from $x^0=0$. From our results (see Section
\ref{sec_lin}) we have linear convergence, which is also seen in
practice from this figure (in logarithmic scale). In the same figure
we also  plot the  theoretical sublinear estimates for the
convergence rate of Algorithm \textbf{(DG)} in the last iterate as
given in Section \ref{sublinear_first} (see  \eqref{fes_sublin} and
\eqref{opt_sublin}). The plot clearly confirms our theoretical
findings, i.e. linear convergence of Algorithm \textbf{(DG)} in the
last iterate, provided that $U=\rset^n$.
\begin{figure}[ht]
\label{fig_lc} \caption{Linear convergence of Algorithms
\textbf{(DG)} and \textbf{(R-DFG)} in the last iterate for $U = \rset^n$: logarithmic
scale of primal suboptimality. We also compare
with the  theoretical sublinear estimates (dot lines) for the
convergence rate in the last iterate. The plot clearly shows  our theoretical
findings, i.e. linear convergence. }
\centerline{\includegraphics[height=5.5cm,width=16cm]{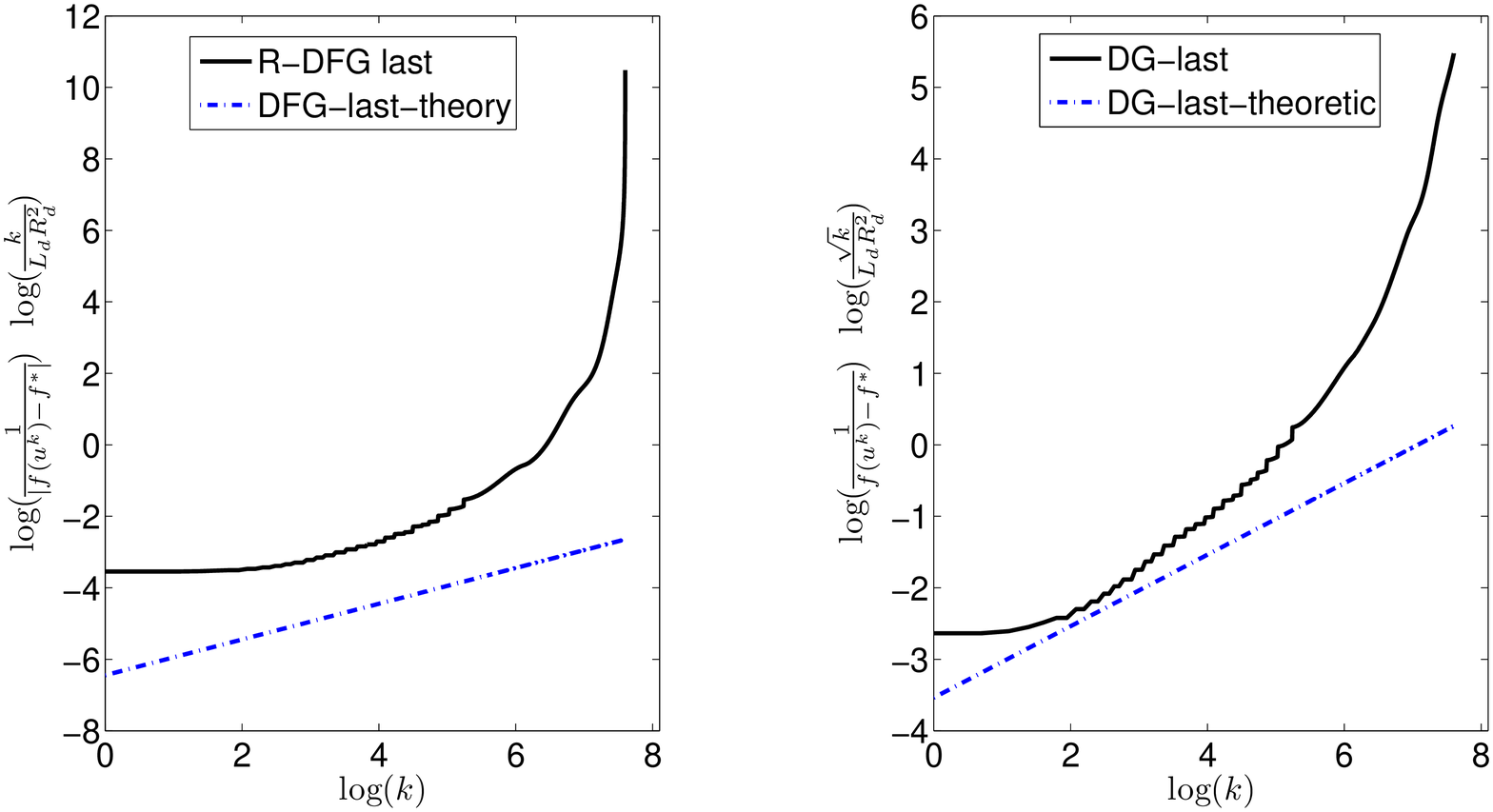}}
\end{figure}


\end{document}